\date{\today}%  (Corrected reference [6])} 
\newif\ifdraft
\newcommand{\red}{} %{\color{red}}
\renewcommand{\red}{\color{red}}
\newtheorem{theorem}{Theorem}[section]
\newtheorem{corollary}[theorem]{Corollary}
\newtheorem{lemma}[theorem]{Lemma}
\newtheorem{proposition}[theorem]{Proposition}
\theoremstyle{remark}
\newtheorem{remark}[theorem]{Remark}
\newcommand{\nwc}{\newcommand}
\nwc{\pdfx}[2]{\texorpdfstring{#1}{#2}} % Macro to allow symbols in subsection heads
\nwc{\hide}[1]{}  %{{#1}}  % activated
\nwc{\qref}[1]{(\ref{#1})}
\nwc{\ip}[1]{\langle #1 \rangle}
\nwc{\sfrac}[2]{{\textstyle \frac{#1}{#2}}}
\nwc{\D}{\partial}
\nwc{\grad}{\nabla}
\nwc{\eps}{\varepsilon}
\nwc{\inv}{^{-1}}
\nwc{\wkto}{\xrightharpoonup{\star}}
\nwc{\sgn}{\mathop{\rm sgn}\nolimits}
\nwc{\tr}{\mathop{\rm tr}\nolimits}
\nwc{\re}{\mathop{\rm Re}\nolimits}
\nwc{\im}{\mathop{\rm Im}\nolimits}
\renewcommand{\Re}{\mathop{\rm Re}\nolimits}
\renewcommand{\Im}{\mathop{\rm Im}\nolimits}
\nwc{\one}{{\mathbbm{1}}}
\nwc{\calA}{{\mathcal A}}
\nwc{\calB}{{\mathcal B}}
\nwc{\calC}{{\mathcal C}}
\nwc{\calD}{{\mathcal D}}
\nwc{\calH}{{\mathcal H}}
\nwc{\calL}{{\mathcal L}}
\nwc{\calK}{{\mathcal K}}
\nwc{\calM}{{\mathcal M}}
\nwc{\calP}{{\mathcal P}}
\nwc{\calU}{{\mathcal U}}
\nwc{\calV}{{\mathcal V}}
\nwc{\calW}{{\mathcal W}}
\nwc{\N}{\mathbb{N}}
\nwc{\Z}{\mathbb{Z}}
\nwc{\C}{\mathbb{C}}
\nwc{\R}{\mathbb{R}}
\nwc{\bbD}{\mathbb{D}}
\nwc{\bbP}{\mathbb{P}}
\nwc{\bbT}{\mathbb{T}}
\nwc{\bbZ}{\mathbb{Z}}
\nwc{\mb}[1]{\mathbf{#1}}
\nwc{\ups}{\mu}
\nwc{\Drho}{\calD_\rho}
\nwc{\tbar}{{\bar t}}
\DeclareMathOperator{\sn}{sn}
\nwc{\esssup}{\mathop{\rm ess\, sup}}
\DeclareMathOperator{\diag}{diag}
\definecolor{mygreen}{rgb}{0.1,0.75,0.2}
\nwc{\wksto}{\xrightharpoonup{\star}}
\nwc{\tlpto}{\overset{{T\!L^p}}{\longrightarrow} }
\nwc{\tltwoto}{\overset{{T\!L^2}}{\longrightarrow} }
\nwc{\tlp}{{T\!L^p}}
\nwc{\tltwo}{{T\!L^2}}
\renewcommand{\th}{\theta}
\nwc{\bodypot}{\calV}
\nwc{\tcr}[1]{ {\red #1}}
\nwc{\pow}{\nu}
\nwc{\vel}{v}
\date{\today} 
\ifdraft \date{WORKING DRAFT \today: PLEASE DO NOT CIRCULATE} \fi
\begin{document}

\ifdraft
\title[DRAFT \mmddyyyydate\today\quad \currenttime \ \ PLEASE DO NOT CIRCULATE]
%\title[In search of local singularities]
{In search of local singularities in \\
ideal potential flows with free surface}

\else
\title[In search of local singularities]
{In search of local singularities in \\
ideal potential flows with free surface}
\fi

\author{Jian-Guo Liu}
\address{
Department of Physics and Department of Mathematics\\
Duke University, Durham, NC 27708, USA}
\email{jliu@phy.duke.edu}

\author[Jian-Guo Liu and Robert L. Pego]{Robert L. Pego}
\address{Department of Mathematical Sciences and Center for Nonlinear Analysis\\
Carnegie Mellon University, Pittsburgh, Pennsylvania, PA 12513, USA}
\email{rpego@cmu.edu}

%\vspace{0.5 cm}
\begin{abstract}  %    
For ideal fluid flow with zero surface tension and gravity,
it remains unknown whether local singularities on the free surface
can develop in well-posed initial value problems with smooth initial data.
This is so despite great advances over the last 25 years in the mathematical
analysis of the Euler equations for water waves. 
Here we expand our earlier work ({\em Chin. Ann. Math. Ser. B} 40 (2019) 925)
and review the mathematical literature and some of the history 
concerning Dirichlet's ellipsoids and 
related hyperboloids associated with jet formation and ``flip-through,'' 
``splash singularities,'' 
and recent constructions of singular free surfaces that 
however violate the Taylor sign condition for linear well-posedness.
We illustrate some of these phenomena with numerical computations 
of 2D flow based upon a conformal mapping formulation 
(whose derivation is detailed and discussed in an appendix).
Additional numerical evidence strongly suggests
that corner singularities may form in an unstable self-similar way 
from specially prepared initial data.
\end{abstract}

\keywords{incompressible flow, water wave equations, splash singularity,
flip-through, Dirichlet hyperbolas, conformal mapping} 
\subjclass[2010]{76B07,76B10,35L67,30C30}

\maketitle
\tableofcontents

%%%%%%%%%%%%%%%%%%%%%%%%%%%%%%%%%%%%%%%%%%%%%%%%%%%%%%%%%%%%%%%%%%%%%%%
%%%%%%%%%%%%%%%%%%%%%%%%%%%%%%%%%%%%%%%%%%%%%%%%%%%%%%%%%%%%%%%%%%%%%%%

\section{Introduction} \label{s:intro}

The highly nonlinear behavior of fluids is an endless source of fascination and challenges 
to our understanding. Much mathematical analysis can only deal with models of smooth, 
rather quiescent flows.
Yet some of the most powerful and dramatic fluid phenomena are associated with singular flows.
What happens when waves crash against a seawall?
How do whitecaps form on windblown waves?  How do droplets shatter and become spray?

Our understanding of such phenomena is very primitive. We consider here 
the question of singularity formation for one of the simplest fluid models,
Euler's equations for potential flow of an ideal fluid. 
With velocity field $v=\nabla\phi$, pressure $p$, and constant density $\rho=1$,
occupying a region $\Omega_t\subset\R^d$ with smooth boundary at time $t$,
these equations take the following form: For each time $t$,
\begin{align}
\Delta\phi = 0 & 
\quad \mbox{in $\Omega_t$}, 
\label{1.laplace}\\
\phi_t + \frac12|\nabla\phi|^2 + p = 0&
\quad \mbox{in $\Omega_t$}, 
\label{1.bernoulli}\\
p = 0 & 
\quad \mbox{on $\D\Omega_t$}. 
\label{1.pzero}
\end{align}
The {\em kinematic condition}, stating that the fluid domain $\Omega_t$ is transported by the velocity,
supplements these equations.
The effects of gravity and surface tension are neglected.
At the small scales involved in singularity formation,  
it is generally appreciated that the effect of gravity ought to be negligible.
It is true that surface tension is physically important on small scales, 
but we focus this study on the mathematical issues that arise when it is neglected.

It is our purpose here to extend our previous work \cite{LPmajda}
reviewing research relevant to the issue of whether and how local singularities
can form in solutions of this system, and offer additional numerical evidence
that suggests a new scenario for formation of a local singularity.
In the sequel, particularly when considering bounded domains $\Omega_t$
we refer to \eqref{1.laplace}--\eqref{1.pzero} simply as the {\em ideal droplet equations}.

\section{Background---scenarios for singularities}

Mathematical analysis of the initial-value problem for the governing equations
\eqref{1.laplace}--\eqref{1.pzero} with free boundary is subtle and difficult.
The problem can be treated, however, by the methods that S. Wu developed in the 1990s for
water waves with gravity.  For smooth enough initial data in smooth bounded domains,
the works \cite{Lindblad,CoutShko2007,CoutShko2010}  establish short-time
existence for smooth solutions of the incompressible Euler equations with
pressureless free boundary in zero gravity, including the case of nonzero vorticity.

In this section we briefly review work related to a number of scenarios for 
the possible breakdown of smooth solutions and development of singularities in solutions.
Bounds that constrain local singularity formation have been provided in 
recent work of Kinsey and Wu~\cite{KinseyWu2018} and Wu~\cite{wu2015blow}.
In the latter work it is also shown that certain kinds of corners in the free surface
can persist for short time if they are present in the initial data.

{\em Splash singularities.}
A simple way that fluids can develop a singularity is by collision of distinct droplets.
A related but more complicated scenario is that different parts of the surface of a 
connected fluid domain may collide, while the interface remains smooth up to the time of collision.
The existence of such {\em splash singularities} 
was proved by Castro et al.~\cite{Castro2012,Castro2013}.

{\em Droplet splitting.}  
One can imagine that a single dumbbell-shaped droplet provided with a strongly bipolar 
initial velocity should break in pieces.   There are many physical studies of
this behavior that take into account surface tension and/or viscosity. We are not aware 
of any study of the problem in the absence of these effects, however, and it may be that surface tension
is necessary after all for pinch-off to occur.  One idea for approaching the splitting problem 
could involve finding a {\em least-action} path of fluid configurations that deform one droplet into two.
Smooth incompressible potential flows in a fixed domain were shown by Brenier \cite[Theorem 2.4]{Brenier99}
to truly minimize action for sufficiently short time. 
These flows correspond to volume-preserving paths of diffeomorphisms that minimize 
distance according to a relaxed version of Arnold's variational characterization of 
geodesic paths in the diffeomorphism group \cite{Arnold66}.
It was recently proved, however, that free-boundary flows with zero gravity and
surface tension are critical paths  for action, but {\em never minimize it}
except for piecewise-rigid motions~\cite[Corollary~5.6]{LPS}.

{\em Flip-through and jet formation.}  The breaking of gravity waves against a vertical wall can be thought of
as a kind of splash singularity, by reflecting the fluid motion through the plane of the wall.
In work of Cooker and Peregrine \cite{cooker1990computations,cooker1992violent}
2D numerical computations show that wave impacts that trap a bubble of `air' are less violent than 
waves that only get {\em close} to breaking at the wall.  
Strong forces and very large accelerations can be produced as a sheet of water ``flips through''
the trough and generates a powerful upward jet of fluid.
For discussion of the flip-through phenomenon and related experiments
see \cite{Peregrine2003,Bredmose2010,wang2018}.

In a series of papers including 
\cite{Longuet1972,Longuet1976,Longuet1980,Longuet1983},
Longuet-Higgins described a jet-formation phenomenon that appears to be associated with 
flip-through and some other situations where the 
na\"ive expectation is that local singularities might form.
In particular, Longuet-Higgins described ``Dirichlet hyperboloid'' exact solutions,
extending a family of time-dependent ellipsoidal solutions found by Dirichlet~\cite{dirichlet1860}
in relation to a long line of investigations on ellipsoidal self-gravitating fluid bodies.
As Longuet-Higgins mentioned, Fritz-John \cite{john1953} had found related flows with parabolic free boundary.

Longuet-Higgins compared Dirichlet hyperboloid solutions with experiments on breaking waves and bubbles
in \cite{Longuet1983}.  All the flows he observed remain smooth. 
There are Dirichlet hyperboloid solutions that become singular in finite time, but what happens is that the 
pressure and velocity blow up everywhere while the fluid interface remains smooth. 
By taking a large-scale limit, in \cite{Longuet1980} Longuet-Higgens described 
time-dependent solutions that have corners for all times.

{\em Self-similar approach to cones or corners.}
The tendency to form jets with smooth tips may make it unlikely that 
smooth free boundaries develop local singularities in many typical flows.
We might expect that a local singularity may appear in a borderline situation,
e.g., between strong flip-through and bubble-trapping splash singularity. 
Experimental and numerical evidence of such a singularity, 
for 3D incompressible flows with viscosity and surface tension,
was provided by D.~Lathrop's group in the 1990s~\cite{lathrop1998,zeff2000singularity}.
These authors demonstrated a self-similar collapse of the fluid interface to one with a conical singularity,
followed by the dramatic emergence of a very high and thin self-similar jet.
No rigorous mathematical analysis of this problem has yet appeared,
as far as we are aware.

{\em Ballistic interfaces.} In the papers 
\cite{KZjfm2014,KZdok2016}, Karabut and Zhuravleva 
described several analytical solutions of the free-boundary problem 
\eqref{1.laplace}--\eqref{1.pzero} 
for which fluid particles on the free surface move with {\em zero acceleration}, i.e., they move ballistically.
Very recently, Zubarev and Karabut \cite{ZK2018} and Zhuravleva et al.~\cite{zhuravleva2020new} 
have described examples of this type of flow capable of developing local singularities 
from a smooth interface.
These solutions are derived using the complex Hopf equation by imposing a particular relation between 
pressure and velocity at the free boundary.  
Zakharov~\cite{Zak20} has provided an interesting independent perspective on solutions of this type.

At present it seems doubtful that the singularities of the kind found in these works
can emerge in smooth flows in bounded domains.  For acceleration-free interfaces, 
both the pressure and its gradient vanish at the free boundary. In addition,
the pressure-velocity relations imposed in \cite{ZK2018,zhuravleva2020new}
imply that the pressure $p<0$ inside the fluid domain.  

However, it is necessary that $p>0$ inside the fluid for any nontrivial smooth solution of
\eqref{1.laplace}--\eqref{1.pzero} in a bounded domain.
This follows from the fact that $-\Delta p = \Delta|\nabla\phi|^2\ge0$.
Then Hopf's lemma implies that
\begin{equation}\label{e:taylor}
\frac{\D p}{\D n} <0  \quad\mbox{on $\D\Omega_t$}.
\end{equation}
In the present context this says that the {\em Taylor sign condition} for linear well-posedness holds. 
More generally this condition requires the outward normal acceleration of the
interface to exceed the acceleration due to gravity, see \cite{taylor50,bhl93}.
It was recognized to be key to nonlinear well-posedness theory by Wu \cite{Wu97,Wu99}.

{\em Plan of the paper.}
In sections~\ref{s:leastaction}--\ref{s:dirichlet} below, we aim to describe
the explicit Dirichlet ellipsoid and hyperboloid solutions of the ideal 
droplet equations \eqref{1.laplace}--\eqref{1.pzero}, with a view to focus on their significance 
for the droplet splitting and jet formation scenarios mentioned above.
These solutions exist in an historical context that is interesting to review,
involving Hamilton's least action principle, kinematically constrained geodesic flow, 
and a nontrivial symmetry exhibited by self-gravitating bodies
that was made explicit by Dekekind when preparing Dirichlet's work for posthumous publication.

In section~\ref{s:ZK}, we summarize how local singularities on ballistic interfaces 
were derived in~\cite{ZK2018} for purely horizontal surface motions
and in \cite{zhuravleva2020new} for cavity collapse scenarios.

Then in the last section below, we extend our computations from~\cite{LPmajda}
to provide additional evidence for a scenario involving unstable corner formation.
We make use of a conformal mapping formulation of the governing equations
closely related to one described by A.~I.~Dyachenko in \cite{dyachenko2001dok}
and used by S.~A.~Dyachenko in \cite{dyachenko2019jfm} to compute bounded ideal droplet solutions
with and without surface tension.
We find evidence for the existence of a two-parameter of self-similar smooth flows that may
emerge from an infinite perfect wedge-shaped domain with power-law initial velocity,
by computing a time-reversed flow that develops from a smooth bounded approximation to the wedge,
together with a scaling argument.  The problem of rigorously demonstrating the existence 
of such solutions (or showing that some other instabilities must occur on scales invisible to our numerics)
appears to pose a difficult challenge for mathematical analysis.

\section{Least action principle with free boundary and self-interaction energy}
\label{s:leastaction}

We begin our study by using Hamilton's least action principle,
and a variant of the standard Helmholtz decomposition of vector fields,
to provide a simple derivation of the governing equations
for smooth ideal fluid flows with pressureless free boundary and self-interaction energy.
We recall V. I. Arnold's classic use of least action to formally characterizes solutions
of the Euler equations for incompressible flows in a fixed domain in terms of geodesic
paths of diffeomorphisms.  

Let $\Omega_t\subset\R^d$ denote the domain occupied by the fluid at time $t$, and let $X$
denote the Lagrangian flow map, defined on the space-time domain $Q=\cup_t \Omega_t\times\{t\}$
so that 
\begin{equation}\label{e:Xflow}
\dot X (z,t) = v(X(z,t),t), \qquad X(z,0)=z\in\Omega_0
\end{equation}
for all $t$ in some interval $[0,\tbar]$. 
Here the velocity field $v$ is presumed to be sufficiently smooth up to the boundary.
The associated density field $\rho$ with given constant initial density $\rho_0$ is given by
\begin{equation}
\rho(x,t) = \rho_0\det\left(\frac{\D X}{\D z}(z,t)\right)\inv, \qquad x=X(z,t)\in\Omega_t.
\end{equation}
We let $\calA = \calK-\calV$ denote the Lagrangian action associated with the flow,
where  
\begin{align}
&\calK = \frac12\int_0^\tbar\int_{\Omega_t}\rho(x,t)|v(x,t)|^2\,dx\,dt 
= \frac12 \int_0^\tbar\int_{\Omega_0}\rho_0 |\dot X(z,t)|^2\,dz\,dt,
\label{d:calK}
\\&
\calV = \frac12 \int_0^\tbar \int_{\Omega_t^2} \Phi(x,x')\rho(x,t)\rho(x',t)\,dx\,dx'\,dt
\label{d:calV}
\end{align}
respectively denote kinetic energy and self-interaction energy with symmetric kernel $\Phi(x,x')$, given for the Newtonian gravitational potential in particular by 
\[
\Phi(x,x')= - \frac{G}{|x-x'|}.
\]

For any family $\eps\to X_\eps$ of flow maps depending smoothly on a variational parameter $\eps$,
one finds that the variation $\delta X = (\D X/\D \eps)|_{\eps=0}$ induces a density variation 
$\delta\rho$ satisfying 
\[
-\frac{\delta\rho}\rho = \nabla\cdot \tilde v, \qquad \tilde v(x,t) = \delta X(z,t),
\]
so naturally $\nabla\cdot\tilde v=0$ for variations that leave the density invariant.

We proceed to compute the variation of the action at a density-preserving flow for 
density-preserving variations. 
Firstly, requiring that the variation $\delta X$ \emph{vanishes at the endpoints $t=0$ and $\tbar$},
we find
\begin{align*}
\delta \calK &= 
\int_0^\tbar\int_{\Omega_0}\rho_0
\dot X(z,t)\cdot\delta\dot X(z,t)\,dz\,dt
= - \int_0^\tbar\int_{\Omega_0}\rho_0
\ddot X(z,t)\cdot\delta X(z,t)\,dz\,dt
\\&=
- \int_0^\tbar\int_{\Omega_t}\rho_0(\D_t v+v\cdot\nabla v)\cdot\tilde v \,dx\,dt ,
\\
\delta \calV &= 
-\int_0^\tbar\int_{\Omega_t}\rho_0 f(x,t)\cdot \tilde v(x,t) \,dx\,dt ,
\end{align*}
where $f(x,t)$ is the (specific) self-interaction force field, given by
\[
f = -\nabla \varphi, \qquad \varphi(x,t) = \int_{\Omega_t} \rho_0\Phi(x,x')\,dx'.
\]
Now a flow $X$ is critical for the action $\calA$ if the variation
\[
\delta A = \delta K-\delta V = 
-\int_0^\tbar\int_{\Omega_t}
\rho_0(\D_t v+v\cdot\nabla v -f)\cdot\tilde v\,dx\,dt = 0,
\]
for all virtual displacements $\tilde v$ for which $\nabla\cdot \tilde v=0$ in $\Omega_t$
and which vanish at $t=0$ and $\tbar$.
At this point we note that any $L^2$ vector field $u$ on $\Omega_t$ has a unique
$L^2$-orthogonal decomposition
\begin{equation}
u = w + \nabla p, \qquad\mbox{with}\quad \nabla\cdot w=0 \mbox{\ \ in $\Omega_t$}, \quad p = 0 \mbox{\ \ on $\D\Omega_t$},
\end{equation}
obtained by solving $\Delta p=\nabla\cdot u$ for $p$ in the Sobolev space $H^1_0(\Omega_t)$.
(This is a variant of the standard Helmholtz decomposition, see \cite[p.~215]{DL}.)
By choosing $u=f-(D_tv+v\cdot\nabla v)$, we infer that for a density-preserving critical path,
the velocity field should satisfy the Euler equations
\begin{equation}\label{e:euler}
\D_t v + v\cdot\nabla v + \nabla p = f, \quad \nabla\cdot v=0 \mbox{\ \ in $\Omega_t$},
\end{equation}
with the condition
\begin{equation}\label{pzero}
\quad p = 0 \mbox{\ \ on $\D\Omega_t$}
\end{equation}
on the free boundary, along with the \emph{kinematic} condition that $\Omega_t=X(\Omega_0,t)$.

It will be useful below to note that in terms of the deformation gradient $F = {\D X}/{\D z}$,
Euler's equations  \eqref{e:euler} in Lagrangian coordinates take the form
\begin{equation}\label{e:eulerL}
F^T \ddot X + \nabla\tilde p +\nabla\tilde\varphi=0, \qquad \det F = 1,
\end{equation}
where $\tilde p(z,t)=p(X(z,t),t)$ and $\tilde \varphi(z,t)=\varphi(X(z,t),t)$ 
respectively represent the pressure and force potential in Lagrangian coordinates, 
since by the chain rule, e.g., 
\[
\frac{\D \tilde p}{\D z}= \frac{\D p}{\D x}\frac{\D X}{\D z}
\qquad\mbox{so}\quad
\nabla \tilde p = F^T\nabla p.
\]

\section{Self-gravitation and Dirichlet's symmetry} 
\label{s:gravity}

In an effort to understand the shape of the earth and other celestial bodies,
many prominent investigators, starting with Isaac Newton, have studied the shape of a
rotating body of fluid with self-gravitation.  
Much historical information on this topic can be found in the book of Chandrasekhar~\cite{chandra}.

In particular, Dirichlet, in a posthumously published paper edited by Dedekind,
was the first to develop equations for \emph{time-dependent} motions 
that preserve ellipsoidal shape \cite{dirichlet1860}.
Of the numerous interesting developments following Dirichlet's work, we mention only a few.
From Dirichlet's equations, Dedekind explictly deduced a surprising symmetry, 
and used it to find ellipsoids with nontrival internal flows, 
conjugate to the rigidly rotating fluid ellipsoids discovered earlier by Jacobi.
In a remarkable paper, Riemann subsequently showed that all rotating, shape-preserving ellipsoids 
fall into three simple classes, and initiated a study of their stability by energy critera~\cite{riemann}.

The reason we bring up this subject is to describe how Dirichlet's ellipsoidal motions 
can be characterized by through a finite-dimensional least-action principle,
and to thereby provide a simple derivation of Dedekind's symmetry.
The first descriptions of a reduced least-action principle for Dirichlet ellipsoids 
appeared only a few years after Riemann's work, in papers by Padova~\cite{Padova1871} and Lipschitz~\cite{Lipschitz1874}; see the excellent review by Borisov et al.~\cite{borisov2009}. In the absence of gravitation, 
critical paths of action correspond to constant-speed geodesic motion
on a determinant-constrained surface in the space of matrices describing the deformation,
as noted by O. M.  Lavrenteva \cite{Lav80}. 

We proceed to details.  Following Dirichlet, we seek motions for which the domain
$\Omega_t\subset \R^3$ is ellipsoidal, with time-dependent semi-axes $a_j(t)$, $j=1,2,3$,
having a constant product $a_1a_2a_3$.
We require the Lagrangian map $z\mapsto X(z,t)$ to be linear, taking the convenient form
\begin{equation}
X_i(z,t) = \sum_{j=1}^3 P_{ij}(t) \frac{z_j}{a_j(0)},  \qquad z\in\Omega_0,
\end{equation}
or in more succinct matrix-vector form,
\begin{equation}\label{d:Xz}
X(z,t) = P(t)\Lambda_0\inv z, \quad \Lambda_t = \diag\{a_j(t):j=1,2,3\}.
\end{equation}

We presume the initial ellipsoid is $\Omega_0 = \Lambda_0 B_1$, where $B_1$ is the unit ball.
That is, $z\in\Omega_0$ if and only if $z=\Lambda_0 y$ with $y\in B_1$.
After a rotation of coordinates, $X(z,t)$ should lie in $\Lambda_t B_1$.
Thus there should exist orthogonal matrices $R(t)$ and $S(t)$ such that 
\begin{equation}\label{d:Xy}
X(z,t) = R(t)\Lambda_t S(t)^Ty, \quad\mbox{with $y=\Lambda_0\inv z$.}
\end{equation}
The modern eye will recognize that this provides the \emph{singular value decomposition}
\[
P = R\Lambda S^T, \quad RR^T=I=SS^T,
\]
with the semi-axes $a_j$ being the singular values of $P$.
The matrix $P(t)$ should satisfy 
\begin{equation}\label{e:P0}
P(0)=\Lambda_0 \quad\mbox{ and }\quad \det P(t)=a_1a_2a_3= \mbox{  constant.}
\end{equation}

The deformation gradient will be a function of time alone, taking the form
\begin{equation}\label{d:Ft}
F(t) = \frac{\D X}{\D z}(z,t) = P(t)\Lambda_0\inv.
\end{equation}
Substituting into Euler's equations written in Lagrangian coordinates,
we require
\begin{equation}\label{e:EulerF}
 F^T\ddot F z + \nabla \tilde p +\nabla\tilde\varphi=0,
\end{equation}
where $\tilde p$ here  is pressure divided by mass density.

It is a remarkable fact, due to Gauss and Rodrigues (see \cite{chandra,Fitzpatrick}) that the self-gravitation potential is quadratic in the spatial variables, taking the following form.
With respect to the coordinates $\hat x=R^Tx$ taken along the principal axes of the ellipsoid, %{\red [sign??]}
\begin{equation}
\varphi(x,t) = - G\rho_0\pi\, a_1a_2a_3
\left(\alpha_0-\sum_{i=1}^3 \alpha_i \hat x_i^2\right),
\end{equation}
\begin{equation}\label{d:alphas}
\alpha_0 = \int_0^\infty \frac{du}{\Delta}, \qquad
\alpha_i = -\frac{1}{a_i}\frac{\D\alpha_0}{\D\alpha_i}= \int_0^\infty \frac{du}{\Delta(a_i^2+u)}, \qquad
\Delta^2 = \prod_{i=1}^3 (a_i^2+u).
\end{equation}
In Lagrangian variables, using \eqref{d:Xy} and noting $R^TX = \Lambda S^T\Lambda_0\inv z$ we may then write
\begin{equation} \label{d:tildephi}
    \tilde\varphi(z,t) = -G\rho_0\pi (\alpha_0+z^TQz)\det\Lambda
\end{equation}
where %{\red [check for lost factors of 2]}
\begin{equation}
    Q = \Lambda_0\inv S \Lambda \frac{\D\alpha_0}{\D\Lambda} 
    S^T \Lambda_0\inv ,
    \quad \frac{\D\alpha_0}{\D\Lambda} = \diag\left\{\frac{\D\alpha_0}{\D\alpha_i}\right\}.
\end{equation}
Then the Lagrangian potential gradient is linear in $z$, with
\begin{equation}
    \nabla\tilde\varphi(z,t) = -2G\rho_0\pi(\det\Lambda)\, Qz
\end{equation}
In light of \eqref{e:EulerF}, the pressure must be quadratic in the spatial variables. In order to vanish on the ellipsoid boundary, it must therefore be that for some scalar function $\beta(t)$,
\begin{equation}
    \tilde p(z,t) = \frac12 \beta(t)(1-|\Lambda_0\inv z|^2)
    \quad\mbox{and}\quad
    \nabla\tilde p(z,t) = -\beta(t)\Lambda_0^{-2}z.
\end{equation}
Substituting the above expressions directly into \eqref{e:EulerF}, with $\gamma_0=2\pi G\rho_0 $ we find Dirichlet's result in the following form.
\begin{lemma}
The linear Lagrangian map in \eqref{d:Xz} provides a solution to the Euler equations if
$P(t)$ satisfies
\begin{align}\label{e:Ptt}
 P^T\ddot P = \beta(t)I + (\gamma_0\det\Lambda) S\Lambda 
\frac{\D\alpha_0}{\D\Lambda} S^T ,
\end{align}
along with the conditions in \eqref{e:P0}.
\end{lemma}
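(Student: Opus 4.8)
The plan is a direct verification. With $P(t)$ satisfying \eqref{e:P0}--\eqref{e:Ptt}, one must check that the linear map \eqref{d:Xz}, together with the quadratic pressure $\tilde p(z,t)=\frac12\beta(t)(1-|\Lambda_0\inv z|^2)$ and the gravitational potential $\tilde\varphi$ in \eqref{d:tildephi}, solves the free-boundary Euler system \eqref{e:euler}--\eqref{pzero} with the kinematic condition. Four things must be confirmed: incompressibility $\nabla\cdot v=0$; the Lagrangian momentum balance \eqref{e:EulerF}; vanishing of the pressure on $\D\Omega_t$; and the kinematic identity $\Omega_t=X(\Omega_0,t)$.

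First I would dispatch the soft items. Since $F(t)=P(t)\Lambda_0\inv$ with $P(0)=\Lambda_0$, we have $F(0)=I$ and $\det F(t)=\det P(t)/\det\Lambda_0\equiv 1$ by \eqref{e:P0}; hence the Eulerian velocity $v(x,t)=\dot P(t)P(t)\inv x$ is smooth up to the boundary with $\nabla\cdot v=\tr(\dot F F\inv)=\tfrac{d}{dt}\log\det F=0$, and the density stays $\rho\equiv\rho_0$, so that the homogeneous-ellipsoid potential formula quoted from \cite{chandra,Fitzpatrick} is the relevant one. For the kinematic identity and the shape of $\Omega_t$, insert the singular value decomposition $P=R\Lambda S^T$: since $S^T$ is orthogonal, $X(\Omega_0,t)=P(t)\Lambda_0\inv\Omega_0=P(t)B_1=R(t)\Lambda_t B_1$, which is precisely the ellipsoid with semi-axes $a_j(t)$ on which $\tilde\varphi$ was computed. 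Finally $\tilde p$ vanishes where $|\Lambda_0\inv z|=1$, that is on $\D\Omega_0$, so $p=0$ on $\D\Omega_t$; and, as noted just before the statement, the quadratic form of $\tilde p$ is in fact forced, because \eqref{e:EulerF} makes $\nabla\tilde p$ affine in $z$ and any quadratic vanishing on an irreducible ellipsoid is a scalar multiple of $1-|\Lambda_0\inv z|^2$.

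The one substantive step is the momentum balance \eqref{e:EulerF}. Using $\ddot X=\ddot P\Lambda_0\inv z$ and $F^T=\Lambda_0\inv P^T$ gives $F^T\ddot X=\Lambda_0\inv P^T\ddot P\,\Lambda_0\inv z$; combining this with $\nabla\tilde p=-\beta(t)\Lambda_0^{-2}z$ and $\nabla\tilde\varphi=-\gamma_0(\det\Lambda)\,\Lambda_0\inv S\Lambda\tfrac{\D\alpha_0}{\D\Lambda}S^T\Lambda_0\inv z$, equation \eqref{e:EulerF} becomes $\big[\Lambda_0\inv P^T\ddot P\,\Lambda_0\inv-\beta\Lambda_0^{-2}-\gamma_0(\det\Lambda)\Lambda_0\inv S\Lambda\tfrac{\D\alpha_0}{\D\Lambda}S^T\Lambda_0\inv\big]z=0$. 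Since this holds for all $z$ in the open set $\Omega_0$, the bracketed matrix must vanish; multiplying it on the left and on the right by $\Lambda_0$ produces exactly \eqref{e:Ptt}. Reading the chain of equalities backward also gives the converse, so the linear ansatz solves the equations precisely when $P$ satisfies \eqref{e:P0}--\eqref{e:Ptt}.

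I do not expect a genuine obstacle here: the only non-elementary inputs are already in hand, namely the Gauss--Rodrigues formula for the quadratic gravitational potential of a homogeneous ellipsoid and its rewriting in Lagrangian coordinates via the SVD, both recorded above. The one place where care is needed is the purely algebraic bookkeeping---checking that the diagonal matrices $\Lambda$ and $\D\alpha_0/\D\Lambda$ commute, so that $Q$ is symmetric and $\nabla(z^TQz)=2Qz$, and keeping straight the constant $\gamma_0=2\pi G\rho_0$ and the signs in the definitions \eqref{d:alphas}. None of this is deep, but it is where an error would hide.
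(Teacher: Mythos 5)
Your proof is correct and follows essentially the same route as the paper: take the Lagrangian form of Euler's equations \eqref{e:EulerF}, plug in $\ddot X=\ddot P\Lambda_0\inv z$, the Gauss--Rodrigues quadratic potential rewritten via the SVD, and the forced quadratic pressure, then conjugate by $\Lambda_0$ to obtain \eqref{e:Ptt}. The paper presents this as a derivation culminating in the lemma rather than as a separate verification, but the content is identical.
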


Next we want to show how \eqref{e:Ptt} arises from reduced least action,
and derive Dedekind's symmetry. Using the fact that
$ 3\int_{B_1} y_i^2\,dy = \frac{4\pi}5 $, the kinetic energy in \eqref{d:calK} is 
reduced to an expression in terms of $\dot P$ via
\begin{equation}\label{e:K(P)}
    \calK(P) = 
    \frac{2\pi}{15}  (\rho_0\det \Lambda_0)
    \int_0^\tbar 
    \tr(\dot P^T\dot P)
    \,dt
\end{equation}
The gravitational potential energy is reduced to an expression in terms of $P$ via
\begin{equation}
    \calV(P) = -\frac12 G  
    (\rho_0\det\Lambda_0)^2
    \int_0^\tbar\int_{B_1^2}
    \frac1{|P(y-y')|}\,dy\,dy'\,dt 
\end{equation}
Note that $\tr(\dot P^T\dot P)=\tr(\dot P\dot P^T)$,
and  since the singular value decomposition of $P^T$ is $S\Lambda R^T$,
orthogonal changes of variables in the last integral yields
\begin{equation}\label{d:U(P)}
 \int_{B_1^2} \frac1{|P(y-y')|}\,dy\,dy'
    =\int_{B_1^2} \frac1{|\Lambda(y-y')|}\,dy\,dy'
    =\int_{B_1^2} \frac1{|P^T(y-y')|}\,dy\,dy'
\end{equation}
By consequence we infer
\begin{lemma}
The reduced action $\calA(P)=\calK(P)-\calV(P)$ of every matrix path $P$ satisfies
\begin{equation}
    \calA(P) = \calA(P^T).
\end{equation}
\end{lemma}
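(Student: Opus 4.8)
The plan is to show that the two functionals $\calK$ and $\calV$ are each individually invariant under the transposition $P\mapsto P^T$ of the entire matrix path, from which $\calA(P)=\calK(P)-\calV(P)=\calA(P^T)$ follows immediately.

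\medskip

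\emph{Step 1: Invariance of the kinetic energy.} Starting from the reduced expression \eqref{e:K(P)}, I would use the elementary identity $\tr(\dot P^T\dot P)=\tr(\dot P\dot P^T)$. Since the map $t\mapsto P(t)^T$ has derivative $t\mapsto\dot P(t)^T$, replacing $P$ by $P^T$ throughout the integral replaces $\tr(\dot P^T\dot P)$ by $\tr((\dot P^T)^T\dot P^T)=\tr(\dot P\dot P^T)$, which equals the original integrand pointwise in $t$. Hence $\calK(P)=\calK(P^T)$.

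\medskip

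\emph{Step 2: Invariance of the self-interaction energy.} Here the key input is the chain of equalities \eqref{d:U(P)}, which says that the inner double integral $\int_{B_1^2}|P(y-y')|^{-1}\,dy\,dy'$ depends on $P$ only through its singular values (equivalently, it is invariant under $P\mapsto R_1 P R_2$ for orthogonal $R_1,R_2$, since an orthogonal factor on the left is an isometry of $\R^3$ and an orthogonal factor on the right can be absorbed by the change of variables $y\mapsto R_2^T y$, $y'\mapsto R_2^T y'$ on the ball $B_1$, which has unit Jacobian). Because $P$ and $P^T=S\Lambda R^T$ share the same singular values $a_j$, the integrand at time $t$ is unchanged when $P(t)$ is replaced by $P(t)^T$. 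Integrating in $t$ and noting that the prefactor $-\tfrac12 G(\rho_0\det\Lambda_0)^2$ does not involve $P$, we get $\calV(P)=\calV(P^T)$.

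\medskip

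\emph{Step 3: Conclusion.} Subtracting, $\calA(P)=\calK(P)-\calV(P)=\calK(P^T)-\calV(P^T)=\calA(P^T)$.

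\medskip

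I do not anticipate a serious obstacle: both halves are essentially bookkeeping on top of identities already recorded in the excerpt. The one point requiring a little care is making explicit the orthogonal-change-of-variables argument underlying \eqref{d:U(P)} — one must check that pre- and post-multiplying $P$ by orthogonal matrices leaves $\int_{B_1^2}|P(y-y')|^{-1}\,dy\,dy'$ unchanged, using that $B_1$ is rotation-invariant and the Jacobian of an orthogonal map is $1$ — but this is routine, and the equalities in \eqref{d:U(P)} already encapsulate exactly what is needed. A secondary point worth a sentence is that the transposition operation is applied to the path as a whole, i.e. $(P^T)(t):=P(t)^T$, so that it commutes with differentiation in $t$; once this is said, Step 1 is immediate.
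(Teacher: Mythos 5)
Your proposal matches the paper's own (rather terse) argument exactly: invariance of the kinetic term via $\tr(\dot P^T\dot P)=\tr(\dot P\dot P^T)$, and invariance of the potential term via the orthogonal change-of-variables chain \eqref{d:U(P)}, which shows the double integral depends on $P$ only through its singular values, shared by $P$ and $P^T$. Your additional explicit remarks (that transposition commutes with $d/dt$, and the Jacobian-$1$ justification for \eqref{d:U(P)}) are just filling in details the paper leaves implicit.
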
 
Since $P$ is a smooth function of $P^T$ and vice versa, 
the chain rule implies that $P^T$ is a critical path for the (determinant-constrained) 
action if and only if $P$ is.
This is \emph{Dedekind's symmetry}, which he used to discover that Jacobi's rigidly rotating ellipsoids correspond to ellipsoids with steady internal flows.

Lastly, we wish to indicate how the evolution equation \eqref{e:Ptt} arises by least action from the reduced action, due to the orthogonal invariance of the reduced potential energy.
\begin{lemma}\label{l:dUdP}
Given any $C^1$ function $\calU:\R^{m\times n}\to \R$ invariant with respect to both right and left multiplication by orthogonal matrices, 
its derivative at a matrix  $P$ can be expressed in terms of the 
singular value decomposition $P=R\Lambda S^T$, $\Lambda=\diag\{a_i\}$,
in the form
\[
\frac{\D U}{\D P} = R \frac{\D U}{\D\Lambda} S^T,
\]
where 
 \[
 \frac{\D U}{\D P} = \left( \frac{\D }{\D P_{ij}}U(P)  \right)
 \quad\mbox{and}\quad
 \frac{\D U}{\D \Lambda} = 
\diag\left\{ \frac{\D U}{\D P_{ii}}(\Lambda)\right\}.
 \]
\end{lemma}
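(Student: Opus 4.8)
The plan is to differentiate the identity $U(P)=U(RPS^T)$ for all orthogonal $R,S$ — or rather to exploit invariance infinitesimally along two families of paths through $P$ — and then combine the resulting constraints on $\D U/\D P$ with the structure of the singular value decomposition. First I would write $P=R_0\Lambda S_0^T$ for a fixed SVD, and observe that by the chain rule, the matrix of partial derivatives transforms contravariantly under the substitution $P\mapsto R P S^T$: if $\tilde U(P)=U(RPS^T)$ then $\D\tilde U/\D P = R^T(\D U/\D P)(RPS^T)\,S$. Since $\tilde U=U$ by hypothesis, evaluating at $P=R_0\Lambda S_0^T$ and choosing $R=R_0^T$, $S=S_0$ gives
\begin{equation*}
R_0^T\,\frac{\D U}{\D P}(R_0\Lambda S_0^T)\,S_0 = \frac{\D U}{\D P}(\Lambda),
\end{equation*}
so it suffices to prove the claimed formula when $P=\Lambda$ is already diagonal, i.e.\ to show $\D U/\D P(\Lambda)$ is the diagonal matrix with entries $\D U/\D P_{ii}(\Lambda)$.

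The second step is to show the off-diagonal entries of $\D U/\D P(\Lambda)$ vanish. For this I would use the invariance under the one-parameter subgroups of rotations acting in a single coordinate pair. Fix $i\neq j$ and let $G(\theta)$ be the rotation by angle $\theta$ in the $(i,j)$-plane (identity elsewhere). Invariance gives $U(G(\theta)\Lambda)=U(\Lambda)$ and also $U(\Lambda G(\theta)^T)=U(\Lambda)$ for all $\theta$; differentiating each at $\theta=0$ and using the chain rule produces two linear relations among the four entries $\D U/\D P_{ii}$, $\D U/\D P_{jj}$, $\D U/\D P_{ij}$, $\D U/\D P_{ji}$ of $\D U/\D P(\Lambda)$, of the form
\begin{equation*}
a_j\,\frac{\D U}{\D P_{ij}} - a_i\,\frac{\D U}{\D P_{ji}} = 0,
\qquad
a_i\,\frac{\D U}{\D P_{ij}} - a_j\,\frac{\D U}{\D P_{ji}} = 0
\end{equation*}
(the precise signs to be checked from the generator $E_{ij}-E_{ji}$ of $G$). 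When $a_i\neq a_j$ this forces $\D U/\D P_{ij}(\Lambda)=\D U/\D P_{ji}(\Lambda)=0$. The degenerate case $a_i=a_j$ requires a short separate argument: one either uses a density argument (generic $P$ has distinct singular values, and $\D U/\D P$ is continuous, so both sides agree on a dense set hence everywhere), or notes directly that in the degenerate subspace one may rotate $\Lambda$ into a nearby diagonal matrix with the same value of $U$ and differentiate there. I expect this degenerate case, and getting the signs in the two relations genuinely independent, to be the only delicate point; everything else is bookkeeping with the chain rule. Finally, assembling: on the diagonal, $\D U/\D P(\Lambda)_{ii}=\D U/\D P_{ii}(\Lambda)$ by definition of $\D U/\D\Lambda$, so $\D U/\D P(\Lambda)=\D U/\D\Lambda$, and conjugating back by $R_0,S_0$ via the first step yields $\D U/\D P(P)=R_0(\D U/\D\Lambda)S_0^T$, which is the assertion.

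The main obstacle, as noted, is the case of repeated singular values, where the SVD is not unique and the naive infinitesimal argument degenerates; I would handle it by continuity/density of matrices with simple spectrum, which is clean provided $U\in C^1$ as assumed. A secondary point to be careful about is that $\D U/\D P_{ii}$ appearing in $\D U/\D\Lambda$ must be interpreted as the partial derivative of the function $\Lambda\mapsto U(\Lambda)$ restricted to diagonal matrices evaluated by freezing the other diagonal entries — but by the vanishing of the off-diagonal derivatives this coincides with the $(i,i)$ entry of the full gradient $\D U/\D P$ at $\Lambda$, so there is no inconsistency.
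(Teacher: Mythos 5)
Your proposal is correct, and it takes a genuinely different route than the paper. The paper differentiates the singular value decomposition itself: for a perturbation $P+\eps\tilde P$ with distinct singular values, the SVD factors vary smoothly, and one computes
\[
\tr\!\left(\frac{\D U}{\D P}^{\!T}\tilde P\right)
=\left.\frac{d}{d\eps}U(\Lambda(\eps))\right|_{\eps=0}
=\tr\!\left(\frac{\D U}{\D\Lambda}\Lambda'\right),
\]
then substitutes $\Lambda' = R^T\tilde P S - R^TR'\Lambda - \Lambda S'^TS$ and observes that the two terms involving $R'$ and $S'$ drop out because $R^TR'$ and $S'^TS$ are skew while $\Lambda\,\D U/\D\Lambda$ is symmetric, so their traces vanish. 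This identifies $\D U/\D P$ with $R(\D U/\D\Lambda)S^T$ directly, without ever needing to know that $\D U/\D P(\Lambda)$ is diagonal. Your argument instead has two independent modular steps: first you use the equivariance of the gradient under the bi-orthogonal action to reduce to the case $P=\Lambda$, and then you kill the off-diagonal entries of $\D U/\D P(\Lambda)$ by differentiating along one-parameter rotation subgroups in each coordinate plane, getting the $2\times2$ system with determinant $a_i^2-a_j^2$. The paper's approach is more compact and avoids picking coordinate planes, but it relies on the (nontrivial, though standard) smooth dependence of the SVD on a matrix with distinct singular values; your approach is more elementary in that it only uses invariance along explicit one-parameter subgroups, and it gives the extra structural information that the gradient at a diagonal matrix is diagonal. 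Both proofs handle the case of repeated singular values by the same density-and-continuity argument, and both need the same minor care when $m\neq n$ (using rectangular $\Lambda$ and planes inside $\R^{\min(m,n)}$). One small slip in your write-up: the two linear relations involve only $\D U/\D P_{ij}$ and $\D U/\D P_{ji}$, not the diagonal entries $\D U/\D P_{ii}$, $\D U/\D P_{jj}$ as you parenthetically suggest, but this does not affect the argument.
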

\begin{proof}
By density we may assume the singular values $a_i$ of $P$ are distinct. 
Then for any perturbation direction $\tilde P$ there is a $C^1$-smooth singular value decomposition
\[
P+\eps \tilde P = R(\eps)\Lambda(\eps)S(\eps)^T
\]
for  $|\eps|$ small enough. Letting $'$ denote the derivative in $\eps$, 
evaluated at $\eps=0$, we note
\[
\Lambda' = R^T\tilde P S - R^TR'\Lambda - \Lambda S'^TS
\]
Since $\tr(AB)=\tr(BA)$ for any square matrices $A$ and $B$,
we then find by invariance that
\begin{align*}
& 
\tr\left(\frac{\D U}{\D P}^T \tilde P  \right) 
=  \left.  \frac{d}{d\eps} U(P+\eps\tilde P)  \right|_{\eps=0} 
=  \left.  \frac{d}{d\eps} U(\Lambda(\eps))  \right|_{\eps=0} 
= 
\tr\left(
\frac{\D U}{\D\Lambda}
\Lambda' 
\right)   
\\& =
\tr\left(
S\frac{\D U}{\D\Lambda}
R^T\tilde P 
\right) 
-
\tr\left(
R^TR'
\Lambda
\frac{\D U}{\D\Lambda}
\right) 
-
\tr\left(
\frac{\D U}{\D\Lambda}
\Lambda
S'^TS
\right) 
= 
\tr\left(
S \frac{\D U}{\D\Lambda}
R^T\tilde P 
\right) .
\end{align*}
The last equality holds because $R^TR'$ and $S'^TS$ are skew while 
$\Lambda\frac{\D U}{\D\Lambda}=  \frac{\D U}{\D\Lambda}\Lambda$ 
is symmetric.
\end{proof}

The reduced gravitational potential energy takes a classic expression \cite[p.~700]{Lamb} in terms
of the singular values $a_j$ of $P$, using 
the function $\alpha_0=\alpha_0(\Lambda)$ from \eqref{d:alphas}, as
\begin{align*}
\calV(P) &= 
\frac12 \rho_0  \int_0^\tbar
\int_{R\Omega_t}\varphi(R\hat x,t)\,d\hat x\,dt
=-\frac3{10} GM^2
\int_0^\tbar \alpha_0(\Lambda(t)) \,dt ,\qquad M = \rho_0\frac{4\pi}3a_1a_2a_3.
\end{align*}
Therefore the quantity in \eqref{d:U(P)} can be expressed in the form
\begin{equation}
 \int_{B_1^2} \frac1{|P(y-y')|}\,dy\,dy' = 
\frac{(4\pi)^2}{15} U(P),
\qquad\mbox{where}\quad
U(P)=U(\Lambda) =  \alpha_0(\Lambda).
\end{equation}
Incorporating the constraint $\log\det P(t)=$ const 
yields the augmented reduced action 
\[
\tilde\calA =  \frac{4\pi}{15}\rho_0\det\Lambda_0
 \int_0^\tbar  \left(\frac12\tr(\dot P^T\dot P) 
+\gamma_0(\det\Lambda_0)U(P)+\beta(t)\log\det P\right)\,dt 
\]
Applying Lemma~\ref{l:dUdP} after noting $\det P =\det\Lambda$,
we find that the criticality condition $\delta \tilde A=0$
subject to the constraint $\det P(t)=$ const corresponds to 
the equation
\begin{equation}
\ddot P % = \beta(t)P^{-T} + (\gamma_0\det\Lambda) R \frac{\D\alpha_0}{\D\Lambda} S^T
=R\left(\beta(t)\Lambda\inv+(\gamma_0\det\Lambda)\frac{\D\alpha_0}{\D\Lambda}\right)S^T,
\end{equation}
which is equivalent to \eqref{e:Ptt}.
(It is curious that Lemma~\ref{l:dUdP} provides Abel's formula 
for the derivative of $\log\det P$.)

There is a considerable body of modern literature studying the Hamiltonian dynamics of the reduced dynamics; we refer to 
Borisov et al.~\cite{borisov2009},  Morrison et al. 2009\cite{morrison2009}, 
and Lewis~\cite{lewis2013} for further discussion and references.

\section{Dirichlet ellipsoids and hyperboloids}
\label{s:dirichlet}

Next we specialize the discussion to review properties of 
a family of simple exact solutions to the zero-gravity water wave equations
with pressureless free boundaries given by conics.
In particular we pay attention to the possible singular features of such flows,
focussing on 2D and the development of fluid jets.
We remark also upon a geodesic interpretation that proved useful in our
study \cite{LPS} that was motivated by a droplet splitting scenario.

The flows that we study here are all simple straining flows.
The ellipsoids are special cases of solutions found by 
Dirichlet \cite{dirichlet1860}, and hyperboloids
were found by Longuet-Higgins \cite{Longuet1972}.

\subsection{Geodesic curves of conics.}

We now describe some potential flows with conic free surface in any dimension $d\ge2$.
The Lagrangian flow map associated to the velocity field $v=\nabla\phi$ will satisfy
\begin{equation}\label{e:Xdot}
\dot X(z,t) = \nabla\phi(X(z,t),t),
\qquad
X(z,0)= z,
\end{equation}
for all $z\in \Omega_0\subset\R^d$ and all $t$.
All our flows here will correspond to quadratic potentials of the form
\begin{equation}\label{d:phi0}
\phi(x,t) = \frac12 \sum_{j=1}^d \alpha_j(t)x_j^2 -\lambda(t),
\qquad \mbox{with}\quad \Delta\phi=\sum_{j=1}^d \alpha_j(t)=0,
\end{equation}
so that the components of the Lagragian map evolve in a purely dilational way according to
\begin{equation}\label{e:dotXj}
\dot X_j = \alpha_j(t) X_j\,, \quad j=1,\ldots,d .
\end{equation}
Fixing some $\sigma_0\in\R$ and some choice of signs $\sigma_j=\pm 1$ for $j=1,\ldots,d$, 
the fluid will be taken to occupy a domain of the form 
\begin{equation}\label{d:Omegat}
\Omega_t=\{x\in\R^d: S(x,a(t))<\sigma_0\},
\end{equation}
where we define
\begin{equation}\label{d:Sxa}
S(x,a) = \sum_{j=1}^d \sigma_j \frac{x_j^2}{a_j^2} \,, 
\qquad
a=(a_1,\ldots,a_d)\in \R^d_+\,.
\end{equation}

The kinematic condition that the boundary flows with the fluid requires that for $z\in\D\Omega_0$,
\[
0 = \frac12 \frac{d}{dt}S(X,a) 
= \sum_{j=1}^d \sigma_j\frac{X_j^2}{a_j^2}\left(\alpha_j - \frac{\dot a_j}{a_j}\right)
\]
Leaving degenerate cases aside, it suffices to suppose that 
\begin{equation}\label{e:dotaj}
\dot a_j = \alpha_j a_j \,,\quad j=1,\ldots,d.
\end{equation}
Due to the incompressibility constraint in \eqref{d:phi0} it follows that the product
\begin{equation}\label{ed:c}
a_1\cdots a_d=r^d 
\end{equation}
remains constant in time. 

We recall the simple proof of the following result from \cite{LPmajda} (with a slight change of notation)
that provides a geodesic interpretation for solutions of the kind considered here.
\begin{proposition}\label{p:Edrop} Given a constant $r>0$,
let $a(t)=(a_1(t),\ldots,a_d(t))$ be any constant-speed geodesic on
the surface determined by the relation \eqref{ed:c}
in the space $\R_+^d$ with metric of signature $(\sigma_1,\ldots,\sigma_d)$
(possibly indefinite). 
Then this determines an ideal potential flow with $\Omega_t$ as in \eqref{d:Omegat}, 
pressure given by 
\begin{equation}\label{d:pressure}
p(x,t) =  \frac{\beta(t)}2(\sigma_0- S(x,a)), %\left(1-\sum_j \sigma_j \frac{x_j^2}{a_j^2}\right) ,
\qquad 
\beta(t) = \frac{\sum_j \dot a_j^2/a_j^2}{\sum_j \sigma_j/ a_j^2 } ,
\end{equation}
and potential $\phi$ given by \eqref{d:phi0} with $\alpha_j=\dot a_j/a_j$
and $\dot\lambda = \frac12\beta\sigma_0$.
\end{proposition}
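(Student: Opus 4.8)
The plan is to verify the ansatz satisfies \eqref{1.laplace}--\eqref{1.pzero} together with the kinematic condition by direct computation, with everything reduced to the constant-speed geodesic equation for $a(t)$. First I would record the flow map: integrating \eqref{e:dotXj} with $\alpha_j=\dot a_j/a_j$ (from \eqref{e:dotaj}) gives $X_j(z,t)=z_j\,a_j(t)/a_j(0)$, so that $S(X(z,t),a(t))=S(z,a(0))$ along every particle path; hence $\Omega_t=X(\Omega_0,t)$ has exactly the form \eqref{d:Omegat}, so the kinematic condition holds. The Laplace equation \eqref{1.laplace} is immediate from \eqref{d:phi0}, since $\sum_j\alpha_j=\frac{d}{dt}\log(a_1\cdots a_d)=0$ by \eqref{ed:c}; and \eqref{1.pzero} is trivial, since on $\D\Omega_t$ one has $S(x,a)=\sigma_0$, whence $p=0$ from \eqref{d:pressure}.

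The content is in the Bernoulli relation \eqref{1.bernoulli}. With $\phi$ as in \eqref{d:phi0} one computes $\phi_t+\frac12|\nabla\phi|^2=\frac12\sum_j(\dot\alpha_j+\alpha_j^2)x_j^2-\dot\lambda$, and since $\alpha_j=\dot a_j/a_j$ gives the identity $\dot\alpha_j+\alpha_j^2=\ddot a_j/a_j$, the Bernoulli equation with the pressure $p$ of \eqref{d:pressure} will hold as soon as we know two things: that $\ddot a_j/a_j=\beta(t)\,\sigma_j/a_j^2$ for every $j$, i.e. $\ddot a_j=\beta(t)\sigma_j/a_j$, and that the constant terms match, $\dot\lambda=\frac12\beta\sigma_0$, which is part of the hypothesis. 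So the proof reduces to showing that a constant-speed geodesic on the surface $\{a_1\cdots a_d=r^d\}$ in the metric of signature $(\sigma_1,\ldots,\sigma_d)$ satisfies $\ddot a_j=\beta(t)\sigma_j/a_j$ with $\beta$ given by the formula in \eqref{d:pressure}.

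For that I would use the characterization that a curve in a submanifold is a (constant-speed) geodesic precisely when its ambient acceleration is normal to the submanifold, valid also for an indefinite ambient metric. Differentiating the constraint $\sum_j\log a_j=$ const shows the tangent space at $a$ is $\{v:\sum_j v_j/a_j=0\}$, so a vector $w$ is $\sigma$-orthogonal to it iff $\sum_j\sigma_j w_j v_j=0$ for all such $v$, i.e. $\sigma_j w_j=c/a_j$, i.e. $w_j=c\,\sigma_j/a_j$ for some scalar $c$. Applying this to $w=\ddot a$ gives $\ddot a_j=\beta(t)\sigma_j/a_j$ for a scalar function $\beta(t)$, the required form; and differentiating the constraint a second time, $\sum_j(\ddot a_j/a_j-\dot a_j^2/a_j^2)=0$, then substituting $\ddot a_j=\beta\sigma_j/a_j$, yields $\beta\sum_j\sigma_j/a_j^2=\sum_j\dot a_j^2/a_j^2$, which is exactly the formula for $\beta(t)$ in \eqref{d:pressure}. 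Assembling these pieces finishes the verification.

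I do not anticipate a real obstacle: the computation is routine once organized this way. The only points needing a little care are the indefinite-signature bookkeeping in the normality step (and the remark that "constant-speed geodesic" is the right notion, with null directions causing no trouble in the formal identity), and checking that the scalar multiplier delivered by the geodesic equation coincides with the $\beta$ appearing in the pressure, which the second differentiation of the constraint pins down.
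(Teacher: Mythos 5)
Your proposal is correct and follows essentially the same route as the paper's proof: identify the geodesic condition as $\ddot a_j = \beta\sigma_j/a_j$ (the paper gets this via an augmented action with Lagrange multiplier $\beta$, you get it by directly characterizing the $\sigma$-orthogonal complement of the tangent space — two presentations of the same normality criterion), pin down $\beta$ by differentiating the constraint twice, and then verify Bernoulli. The only cosmetic difference is that you spell out the flow map and kinematic condition explicitly, which the paper leaves to the preceding discussion.
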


\begin{proof} The path $t\mapsto a(t)$ 
is a geodesic on the surface defined by \eqref{ed:c} with constant squared speed 
$\sum_j \sigma_j\dot a_j^2$
if and only if the acceleration $\ddot a$ is parallel to the surface normal. 
Here this means that there is some scalar $\beta=\beta(t)$, 
\begin{equation}\label{e:ddotaj}
\ddot a_j = \frac{\beta\sigma_j}{a_j}, \qquad j=1,\ldots,d.
\end{equation}
The reason is that such a geodesic is a critical
path for the augmented action
\[
\tilde\calA = \int_0^T \sum_j \left(\frac12 \sigma_j\dot a_j^2 + \beta(t)\log \frac{a_j}r\right)\,dt.
\]
The value of $\beta(t)$ must be as stated in \eqref{d:pressure} since we require
\[
0 = \frac{d^2}{dt^2} \sum_{j}\log a_j = \sum_j \frac{a_j\ddot a_j - \dot a_j^2}{a_j^2}.
\]

Define $\phi$ by \eqref{d:phi0} with $\alpha_j$ and $\dot\lambda$ as stated in the Proposition.
Because $ \dot\alpha_j+\alpha_j^2= \ddot a_j/a_j=\beta\sigma_j/a_j^2$, 
the pressure from the Bernoulli equation \eqref{1.bernoulli} must satisfy
\begin{align*}
p &= -\phi_t-\frac12|\nabla\phi|^2 = \dot\lambda -\frac12\sum_j (\dot\alpha_j+\alpha_j^2)x_j^2
=  \frac\beta2(\sigma_0- S(x,a)).
\end{align*}
Thus $p=0$ on $\D\Omega_t$, and the ideal droplet equations all hold. 
\end{proof}

Under the present conventions, we note that
the Taylor sign condition \eqref{e:taylor} holds exactly when 
$p>0$ in $\Omega_t$, and this occurs exactly when
$\beta>0$ in \eqref{d:pressure}.

\subsection{Ellipsoidal droplets}\label{ss:ellipse}
The fluid domains $\Omega_t$ always remain bounded and ellipsoidal 
in case $\sigma_j=1$ for all $j=0,1,\ldots,d$.
These Dirichlet ellipsoids played an important role in the study of action-infimization
for free boundary droplet flows carried out in \cite{LPS},
particularly the ones corresponding to length-minimizing paths. 

The solution remains smooth globally for $t\in\R$, since the vector $a(t)$ of semi-major axis lengths
moves at a constant (Euclidean) speed $c=|\dot a|$ on the surface \eqref{ed:c} 
and cannot reach any singular point in finite time.
The pressure $p>0$ in $\Omega_t$
because $\beta>0$ in \eqref{d:pressure}, so the Taylor sign condition holds,
consistent with well-known results on well-posedness for water wave dynamics
\cite{Wu97,Wu99,Lindblad,CoutShko2007}.  

Each velocity component $\dot a_j$ is increasing,
because it turns out that $\ddot a_j=\beta\sigma_j/a_j>0$ for all $j$. 
The speed $c$ bounds $|\dot a_j|$ for all $j$ as well.
As $t\to+\infty$, necessarily some component $a_j\to\infty$,
and as $t\to-\infty$, some component $a_k\to\infty$, since 
$\sum \dot a_j/a_j=0$. 

\subsection{Ellipsoidal voids} The fluid can be considered to occupy the domain
\emph{exterior} to the ellipsoids above by taking $\sigma_j=-1$ for all $j$.
In this case, the pressure $p<0$ in $\Omega_t$ because $\beta<0$ in \eqref{d:pressure}.
The Taylor sign condition fails by consequence, and
we can expect this `bubble' flow to be highly unstable.

\subsection{Hyperbolas in 2D}
For the case when the signs of $\sigma_j$ can differ, 
the planar case $d=2$ admits the most simple and complete description.
We set $\sigma_0=\sigma_1=-1 = -\sigma_2$, so that the domain $\Omega_t$ corresponds to 
\begin{equation}
\frac{x_1^2}{a_1^2} > 1 + \frac{x_2^2}{a_2^2}.
\end{equation}
The equations of motion derive solely from 
incompressibility and geodesic speed constraints:
\begin{equation}
a_1a_2 = r^2,\qquad  -\dot a_1^2 + \dot a_2^2 = \hat s \in\R.
\label{e:a1a2}
\end{equation}
Eliminiating $\dot a_2$ we find  $\dot a_1^2 (a_2^2-a_1^2) = \hat s a_1^2$, 
whence 
with $\tau=\pm\sqrt{|\hat s|}$ we have
\begin{equation}
\dot a_1 
= \frac{\tau}{|\tan^2\theta-1|^{1/2}}\,,
\qquad \tan\theta = \frac{a_2}{a_1} = \frac{r^2}{a_1^2}\,. 
\label{ev:a1dot}
\end{equation}
Here $\theta=\theta(t)$ is the angle that the hyperbola's asymptote makes with the $x_1$ axis.

The pressure from \eqref{d:pressure} has the same sign as $\beta$,
which is given here by 
\[
\beta = \frac{ a_2^2 \dot a_1^2+a_1^2\dot a_2^2}{a_1^2-a_2^2} = \frac{2\dot a_2^2}{1-\tan^2\theta}.
\]
The pressure is positive and the Taylor sign condition \eqref{e:taylor} holds 
when $0<\theta<\pi/4$ ($a_1>a_2$), 
and pressure is negative and the Taylor sign condition violated
when $\pi/4<\theta<\pi/2$ ($a_1<a_2$).

{\it Singularities.}
No solution exists globally for $t\in\R$.  The solution becomes singular in
finite time when $a_1-a_2$ reaches zero, which means that the
asymptotic angle $\theta$ reaches $\pi/4$. 
If initially $\theta<\pi/4$ and $\dot a_1<0$ 
the solution becomes singular as $t$ increases, 
but exists globally for $t<0$ with $a_1\to\infty$ as $t\to-\infty$.  
The same happens if $\theta>\pi/4$ and $\dot a_1>0$.
The reverse happens if $\theta<\pi/4$ and $\dot a_1>0$,
or if $\theta>\pi/4$ and $\dot a_1<0$---the solution exists globally
for $t>0$ with $a_1\to\infty$ as $t\to+\infty$.

In all cases, the {\em free surface shape remains smooth} approaching a singular time.
If the Taylor sign condition holds and $t$ increases approaching
singularity, the angle between the asymptotes widens and approaches $90^{\circ}$.
The pressure and fluid velocity blow up {\em everywhere}, 
since $\alpha_1=\dot a_1/a_1$ blows up.  
Of course, the domain is unbounded and the energy is infinite, so 
it is unclear whether this is relevant for any finite energy flow.

{\it Corners.}  No free-surface singularity occurs in any solution we have discussed so far.
As Longuet-Higgins \cite{Longuet1980} pointed out, one obtains a simple flow 
containing a corner for all time, in a limit obtained by ``zooming out.'' 
Here it corresponds to taking $\sigma_0=0$, so that for example $\Omega_t$ corresponds
to the sector of the plane where
\[
\frac{x_1}{a_1(t)}>\frac{|x_2|}{a_2(t)}
\]
The same equations \eqref{e:a1a2} and 
\eqref{ev:a1dot} govern the evolution of the sector opening angle.
As above, the Taylor sign condition holds if the corner angle 
$2\theta$ is less than $90^\circ$ and is violated if it is greater 
than $90^\circ$.  Blowup occurs in the same ways as before. 

The condition $2\theta<90^\circ$ is consistent
with the theory for water waves with persistent corners
developed by  Kinsey and Wu \cite{KinseyWu2018} and Wu~\cite{wu2015blow},
since corners with angles less than $90^\circ$ have the
finite ``energy'' defined in \cite{KinseyWu2018} necessary to apply their theory.

\section{Locally singular ballistic interfaces}
\label{s:ZK}

Recently, Zubarev and Karabut \cite{ZK2018} and Zhuravleva et al.~\cite{zhuravleva2020new} 
have described rather explicit examples of ideal fluid flows on unbounded fluid domains 
that are capable of developing local singularities on the free surface.
These examples provide solutions of the ideal droplet equations
\eqref{1.laplace}--\eqref{1.pzero} 
that are derived from particular holomorphic solutions of the complex Hopf equation 
or inviscid Burgers equation
\begin{equation}\label{e:hopf}
V_t + V V_z = 0 \quad \mbox{for $z\in\Omega_t$.}
\end{equation}
Here $z=x+iy\in\Omega_t\subset\C$ corresponds to Eulerian variables in the fluid domain. 

A solution of \eqref{e:hopf} corresponds to a solution of \eqref{1.laplace}--\eqref{1.pzero} via 
\begin{equation}\label{d:ZK1}
V= u-iv=\phi_x-i\phi_y \,,
\end{equation}
{\em provided} (i) $V$ is holomorphic in $z$ on $\Omega_t$ (ii) the pressure-velocity relation
\begin{equation}\label{e:zkp}
p = - v^2 \quad\mbox{in $\Omega_t$}
\end{equation}
holds, and (iii) the pressure vanishes on $\D\Omega_t$, i.e., \eqref{1.pzero} holds.
This last condition means that fluid particles on the boundary move purely horizontally, 
and indeed the boundary must satisfy
\begin{equation}\label{e:imU}
\im V = 0 \quad \mbox{on $\D\Omega_t$.}
\end{equation}
As one can verify by straightforward computation, 
the real and imaginary parts of the Hopf equation yield Euler's equations,
noting $p_x = -2vu_y$, $p_y=-2vv_y$.

The characteristic curves $Z(t)$ for the Hopf equation are straight lines that satisfy
\[
\frac{d Z}{d t} = V(Z(t),t), \qquad  \frac{d}{dt} V(Z(t),t)= 0 .
\]
When $v\ne0$, these curves are not fluid particle paths.
However, {\em on the free surface where $p=0$ they are particle paths}. 
Consequently, particle paths on the surface evolve 
{\em in straight lines, horizontally at constant speed}.

In \cite{ZK2018}, the authors find solutions 
by solving implicitly characteristic equations in the form
\begin{equation}\label{e:zF}
z = Vt + F(V).
\end{equation}
Here $F(V)\to0$ as $V\to\infty$ for the values of $V$ relevant to the solution,
and $F$ should be chosen to avoid singularities when $z$ is in the fluid domain.
The case $F(V)=1/(V+i)$ is the simplest one that provides local singularities.
In this case one can use the horizontal velocity $u$ to parametrize 
the free surface via
\begin{equation}\label{e:zgplot}
z = tu + \frac{1}{u+i} = tu+\frac{u}{u^2+1} - \frac{i}{u^2+1}, \quad u\in\R.
\end{equation}
We plot this surface for $t=-4,-3,-2,-1$ in Fig.~\ref{f:ZGplot}.
The surface is a smooth graph $y=\gamma(x,t)$ for $t<-1$, 
since $dx/du<0$ for all $u$.
A cusp develops at $t=-1$, having $y\sim-1+|x|^{2/3}$. 

\begin{figure}
\includegraphics[width=3.5in]
{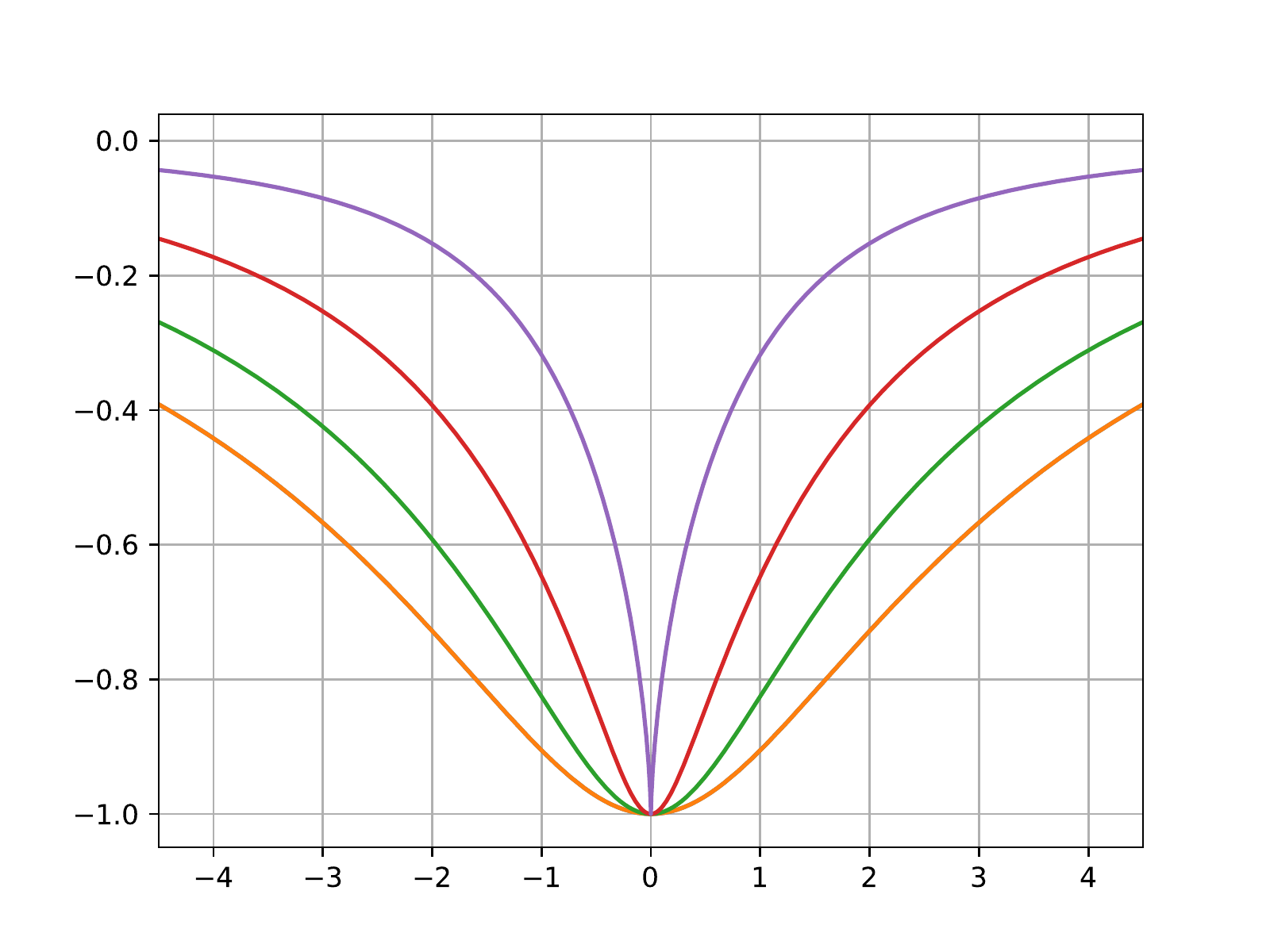}
\put(-126,2){\large $x$}\put(-247,93){\large $y$}
\put(-52,96){\vector(-1,0){30}}
\put(-195,96){\vector(1,0){30}}
\caption{The interface in \eqref{e:zgplot} for $t=-4,-3,-2,-1$ (from bottom to top)}
\label{f:ZGplot}
\end{figure}

Very recently, Zhuravleva et al.~\cite{zhuravleva2020new} have described a different family
of solutions of the ideal droplet equations that describe unbounded flows surrounding a 
collapsing cavity. They use holomorphic solutions to the complex Hopf equation \eqref{e:hopf}
to determine fluid velocity in a different way, namely
\begin{equation}\label{d:ZK2}
 u-iv = \frac1V,
\end{equation}
and impose a different pressure-velocity relation, namely
\begin{equation}\label{zkp2}
 p = \frac12 \log(u^2+v^2) -\frac{u^2+v^2}2 + \frac12
\end{equation}
On the fluid boundary in this case, vanishing pressure necessitates the condition
\begin{equation}
|V|=1 \quad\mbox{for $z\in\D\Omega_t$.}
\end{equation}
Then on the free surface, one finds ballistic particle paths that coincide with 
characteristics according to the relations
\begin{equation}\label{e:Vinv}
z = (u+iv)t + z_0 = Vt + G(V) \,.
\end{equation}
The fluid interface can be determined parametrically by using \eqref{e:zF} with the relation 
$V=e^{i\theta}$ 
on the fluid boundary.  Corresponding to the choice
\begin{equation}\label{e:Fcavity}
G(V) = \frac{4a V}{1-b^4V^4},  \quad a=-0.2,\quad b=1.2,
\end{equation}
the authors in \cite{zhuravleva2020new} show that the cavity collapses to a splash singularity, as shown in 
the left panel of Fig.~\ref{f:cavity}, 
where the interface is shown at times $t=-3,-2,-1.03$ as in \cite{zhuravleva2020new}.  
In the right panel, we take $a=0.2$ instead and plot at the times $t=-11,-8,-5,-2$.
The figure indicates that a local singularity forms at a time $t\approx -5$ and loses physical meaning
after a self-intersection appears.  
Indeed, a local singularity must appear at the time $t=-G'(1)\approx -5.01176$
when the boundary parametrization degenerates.
(For sufficiently large negative times, $\partial z/\partial V\ne0$ for $|V|=1$ and 
injectivity of the map $V\mapsto z$ for $|V|<1$ follows by classical criteria, 
see section~\ref{ss:criteria} below.)

\begin{figure}
\includegraphics[width=3.0in]
{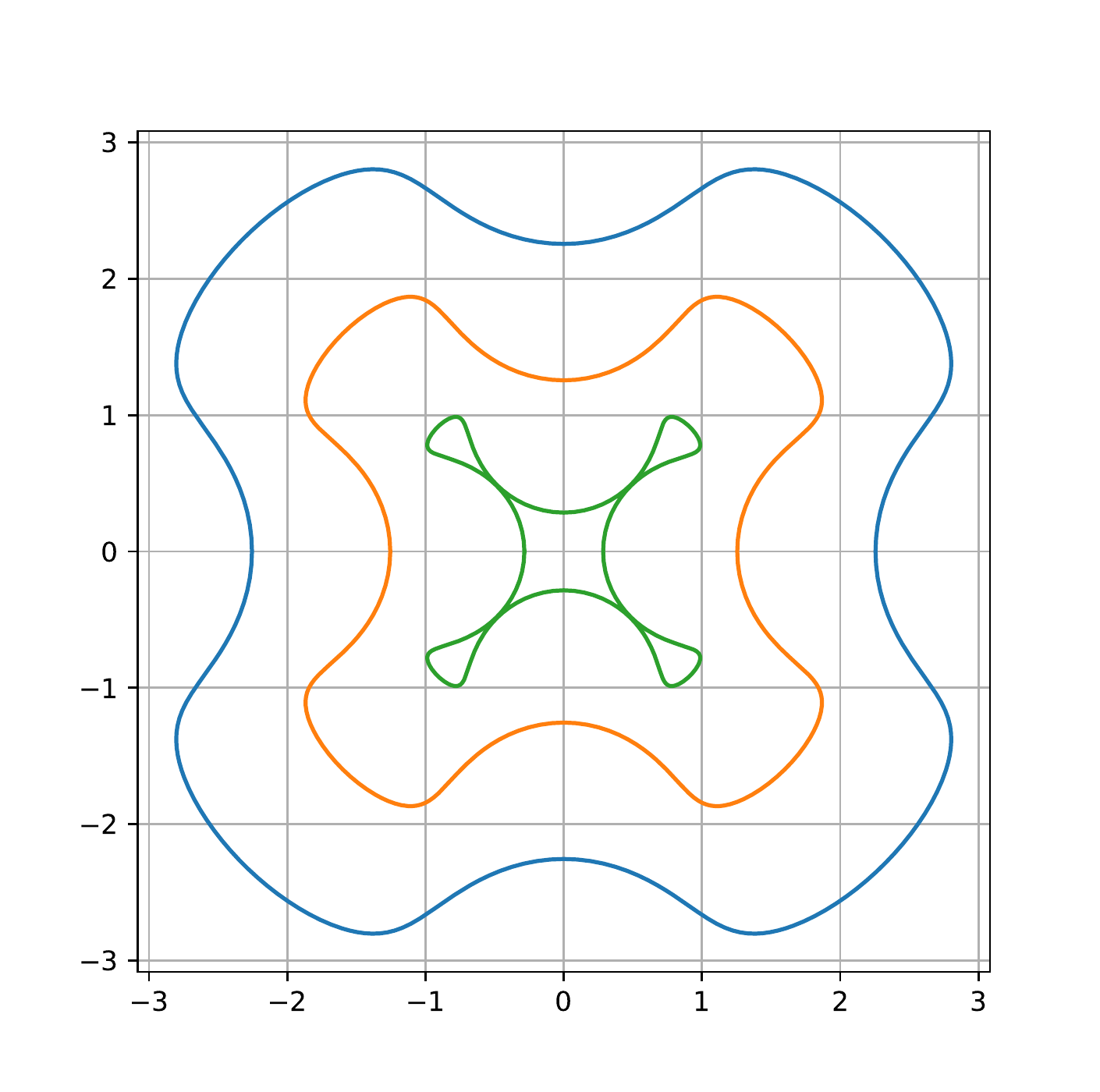}
 \includegraphics[width=3.0in]
{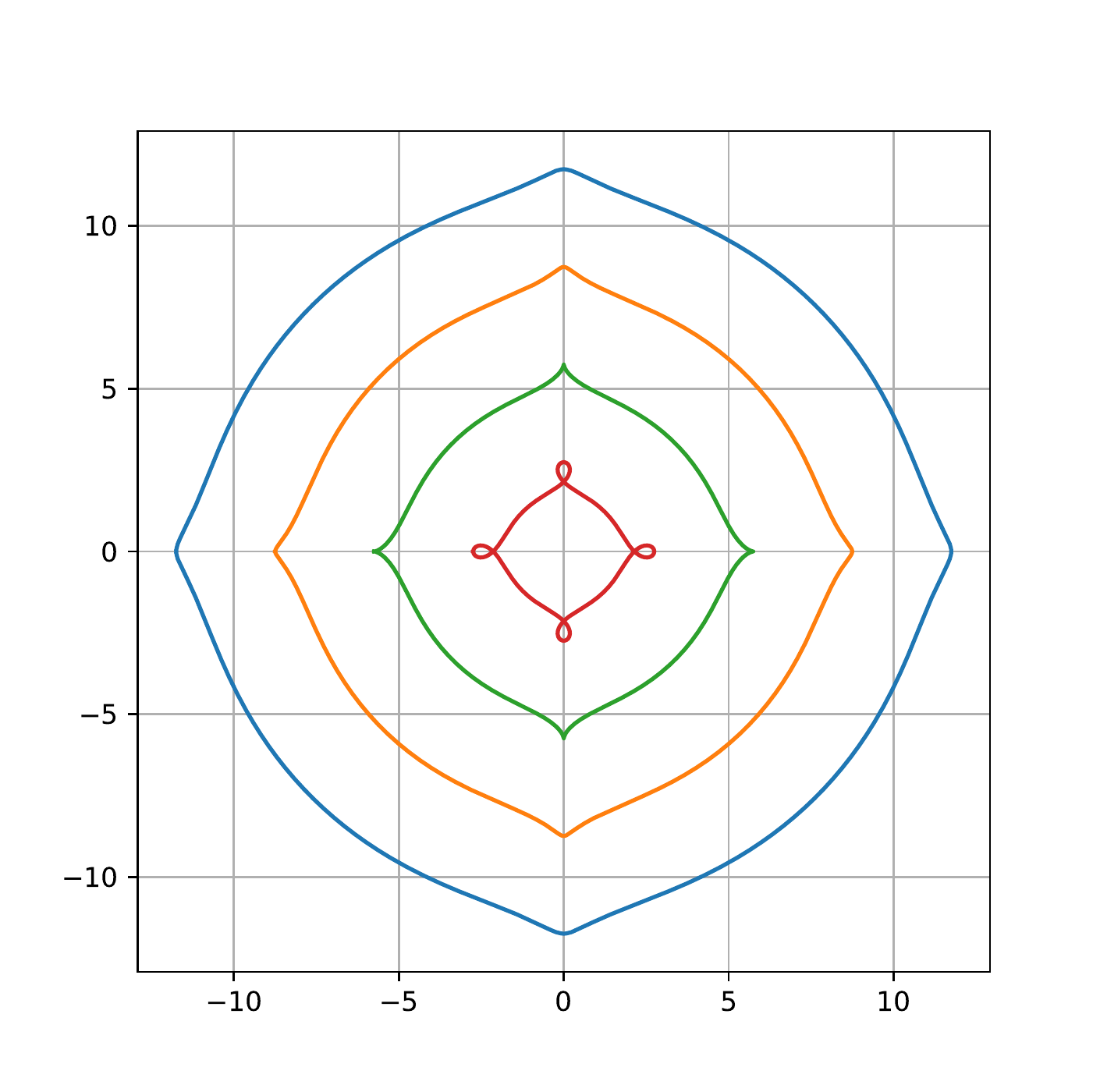}
\caption{Collapsing cavity with splash and local singularities. \newline
(Left: $a=-0.2$, $t=-3,-2,-1.03$. Right: $a=0.2$, $t=-11,-8,-5,-2$.) }
\label{f:cavity}
\end{figure}

For all of the singular solutions found in \cite{ZK2018} and \cite{zhuravleva2020new}, 
the fluid particles on the free surface experience zero acceleration. Indeed, the gradient
of the pressure vanishes at the free surface 
in each of the respective cases \eqref{e:zkp} and \eqref{zkp2}.
By consequence we have
\begin{equation}
\frac{\D p}{\D n} =0  \quad\mbox{on $\D\Omega_t$},
\end{equation}
so the strict Taylor sign condition \eqref{e:taylor} does not hold. 
Necessarily, $p<0$ inside the fluid domain, in fact, for both cases \eqref{e:zkp} and \eqref{zkp2}.

While the solutions in \cite{ZK2018} are certainly interesting, then,
it seems difficult to imagine how they
might approximate solutions of \eqref{1.laplace}--\eqref{1.pzero} in bounded domains,
since for the latter, $-p$ is always subharmonic due to \eqref{1.bernoulli}, 
so $p>0$ in the fluid domain.

\section{Numerical evidence for 2D local singularities}
\label{s:numerics}

In this section our goal is to study the possible development of local singularities
in smooth ideal potential flows through the use of several numerical illustrations and experiments.

Initially we expected that with zero gravity and surface tension, corners in
the free surface would form rather easily, as the fluid `tries to move ballistically' 
except for the pressure term that maintains incompressibility.  
As illustrated in the first examples below, however, our experience is consistent 
with the observations and remarks of Longuet-Higgens \cite{Longuet1983}, 
who used three-dimensional Dirichlet hyperboloids to 
explain jets in several kinds of fluid experiments,
and argued that such hyperboloidal jets may be characteristic of 
other types of unsteady free-surface flows.

As we illustrated in \cite{LPmajda}, it is not difficult to find and compute flows 
that exhibit a splash singularity, with interface that self-intersects at some positive time.
By varying parameters, we attempted to find flows with local singularities forming as
self-intersection points merge together.
But instead we found a tendency for strongly curved interfaces to be unstable through
the formation of small-scale (presumably hyperbolic) jets.

In \cite{LPmajda}, this led us to consider the expedient of exploiting the time-reversal
symmetry of the Euler equations.  We computed solutions \emph{expanding away} 
from a corner. Starting with a sequence of smooth approximations to a nonsmooth 
fluid domain, our computations suggested convergence to a smooth interface 
with bounded curvature at positive time. 

In the last subsection below, we extend these computations using 
equations \eqref{e:QU} instead of \eqref{e:ZF}, and with different initial data.
The results are consistent with the previous ones in \cite{LPmajda}, and are suggestive of
a self-similar scaling hypothesis for a two-parameter family of smooth solutions 
starting from an interface formed by an infinite wedge with power-law initial velocity.
We also provide a heuristic explanation of the scaling exponents that are observed here
and were first seen in \cite{LPmajda}.

\subsection{Conformal formulations and a pseudospectral scheme}
We perform our computations using a filtered pseudospectral discretization 
of the equations of motion in a conformal formulation.
An advantage of this approach that is well known is that the Dirichlet-to-Neumann map
for the fluid domain is replaced by that for the reference domain, which is easier to compute.

For the case that we study here, we will take the reference domain to be the unit disk $\bbD\subset\C$.
With this choice we can make use of the M\"obius automorphisms of $\bbD$ to concentrate grid points 
in some zone of high curvature.  An analogous transformation for periodic water waves was
described in \cite{Lushnikov2017}.  This method is convenient, but is limited in its capability
to resolve fine-scale flow features, as compared to more flexible 
boundary integral methods with adaptive grid refinement, say.

{\it Formulations.}
We refer to the appendix for a detailed derivation of the two conformal formulations 
that we make use of.  Briefly, we let $z=x+iy$ denote
complexified Eulerian coordinates in the fluid domain $\Omega_t\subset\C$.
This domain is assumed to be parametrized by a conformal map 
$w\mapsto \mb Z(w,t)$, $w\in\bbD$.
The boundary $\D\Omega_t$ is then parametrized by $\theta\in\bbT=\R/2\pi\bbZ$
via
\[
z = Z(\theta,t) := {\mb Z}(e^{i\theta} ,t) ,
\quad \theta\in\bbT.
\]
Since $Z = X+iY$ provides the boundary values of a holomorphic function 
in $\bbD$, the real part determines the imaginary part by the
Hilbert transform. With the expansion
\begin{equation}\label{e:Zk}
Z = \sum_{k\in\bbZ} \hat Z_k(t) e^{ik\theta}, \quad Z_k = X_k+iY_k,
\end{equation}
we have (presuming $\hat Y(0,t)=0$ for convenience)
\[
\mbox{$Y=HX$,\quad meaning}\quad
\hat Y_k(t) = (-i\sgn k)\hat X_k(t).
\]

The first conformal formulation involves 
$\bm Z(w,t)$, the conformal parametrization of the fluid domain, 
and $\bm F(w,t)$, the complex velocity potential.
Under the simplest conditions 
that uniquely fix the fluid parametrization, which are 
\begin{equation}
\frac{d}{dt} \bm Z(0,t) = 0, \qquad \frac{d}{dt}\arg \bm Z_w(0,t) = 0,
\end{equation}
the evolution equations for these quantities take the following form:
\begin{align}
\label{e:ZF}
\bm Z_t &= \bm Z_w \bm G, 
\qquad
\bm F_t = \bm F_w \bm G - \bm R,
\end{align}
where the traces $G, R$ of the holomorphic functions $\bm G, \bm R$ are respectively given by
\begin{align}
\label{d:GR}
G &= w(I+iH) \re\left(\frac Un\right), % \left(\frac{\Psi_\theta}{|Z_w|^2}\right),
\qquad R =  (I+iH)\left(\frac12{|U|^2} \right).
\end{align}
Here surface pressure and body forces have been taken as zero.
In these expressions, $U$ and $n$ are the traces of the (anti-holomorphic)
velocity ${\bm U}$ and (unnormalized) normal vector $\bm n$, given by 
\begin{equation}
\bar{\bm U} = \frac{\bm F_w}{\bm Z_w}, \qquad \bm n = w\bm Z_w.
\end{equation}

We make use of a second conformal formulation in order to study dynamics in 
a very large domain approximating an infinite wedge.
The holomorphic function
\begin{equation}
\bm Q = \frac 1{\bm Z_{w}},
\qquad
\end{equation}
evolves together with $\bar{\bm U}$  according to the equations
\begin{align}
\label{e:QU}
\bm Q_t &= \bm Q_w \bm G - \bm Q \bm G_w \,,
\qquad
\bar{\bm U}_t = \bar{\bm U}_w \bm G - \bm Q \bm R_w \,,
\end{align}
with the traces of $\bm G$ and $\bm R$ given as in \eqref{d:GR}.
Essentially this same formulation was described by A.~I.~Dyachenko in \cite{dyachenko2001dok}
and was used recently by S.~A.~Dyachenko \cite{dyachenko2019jfm} 
to compute bounded ideal droplet solutions with and without surface tension.

In each of the two formulations, we compute by evolving just the real parts 
of the traces and determining the imaginary parts using the Hilbert transform.
To recover the boundary parametrization $Z$ from the second formulation 
in a nonsingular way for large domains not encircling $0$,
it is sometimes convenient to write
\begin{equation}
\label{d:S}
\bm S = \frac 1{\bm Z} 
\end{equation}
(or some other analytic function of $1/\bm Z$)
and evolve $\bm S$ (actually the real part of its trace) along with \eqref{e:QU} according to 
\begin{equation}
\label{e:S}
\bm S_t = \bm S_w \bm G \,. %- \bm S \bm G_w \,.
\end{equation}
When $1/\bm Q$ is not singular, we recover $Z$ by integrating with respect to $w$ 
using the fast Fourier transform as indicated below.

{\it Verification of \eqref{e:QU}.} For completeness we derive \eqref{e:QU} from \eqref{e:ZF}.
Since $\bar{\bm U}=\bm Q\bm F_w$, we get
\[
\bm Q_w = -\bm Q^2 \bm Z_{ww}\,, 
\qquad \bar{\bm U}_w = \bm Q \bm F_{ww} + \bm Q_w \bm F_w\,,
\]
\[
\bm Z_{wt} = \bm Z_{ww} \bm G + \bm Z_w \bm G_w\,,
\qquad \bm F_{wt} = \bm F_{ww} \bm G + \bm F_w \bm G_w-\bm R_w\,.
\]
Then it follows
$ %\[
\bm Q_t = -\bm Q^2 \bm Z_{wt}  =  \bm Q_w \bm G - \bm Q \bm G_w %\,,
$ %\]
and 
$\bar{\bm U}_t = \bm Q_t \bm F_w + \bm Q \bm F_{wt}$, so 
\begin{align*}
\bar{\bm U}_t &= (\bm Q_w \bm G- \bm Q \bm G_w)\bm F_w + \bm Q( \bm F_{ww} \bm G + \bm F_w \bm G_w - \bm R_w)
= \bar{\bm U}_w \bm G - \bm Q \bm R_w \,.
\end{align*}

{\it Discretization.}
We use a straightforward psuedospectral scheme to
discretize the equations in space, using grid points $\theta_j=jh$,
$j=1,\ldots,N$, $h=2\pi/N$.  For the system \eqref{e:ZF}, 
we first expressed the equations in real form in terms of the operator
$\D_\theta = iw\D_w$, and then filter all derivatives by 
replacing $\D_\theta$ with Fourier symbol $ik$ 
by $\calD_\rho$ with Fourier symbol
\[
\hat\calD_\rho(k) = ik\,\rho(hk), \qquad \rho(\xi)=
\exp(-10(\xi/\pi)^{15})
\]
This filter is similar to that used in \cite{HouLi07}.
We use a standard ODE solver in the julia OrdinaryDiffEq package 
for time integration, with tolerance set to $10^{-9}$ or smaller.

For system \eqref{e:QU} we convert real parts to complex analytic form 
by the discrete Hilbert transform, e.g., representing $Q(\theta_j,t)$,
$j=1,\ldots,N$ by
\begin{equation}
Q_j(t) = \sum_{k=0}^{N/2-1} \hat Q_k(t) e^{ik\th_j} \,,
\end{equation}
then compute filtered derivatives by using the fast Fourier transform to evaluate
\begin{equation}
(\calD_\rho Q)_j = \sum_{k=1}^{N/2-1}  ik\rho(hk) Q_k e^{i(k-1)\th_j} .
\end{equation}
We sometimes found it useful for numerical stability to additionally filter 
the solution after each time step. 
We recover the interface position when $1/Q$ is nonsingular using the formula
\begin{equation}
Z_j(t) = \sum_{k=1}^{N/2-1} \frac{c_{k-1}(t)}k e^{ik\th_j} \,,
\end{equation}
assuming $\bm Z(0,t)=0$, 
where the coefficients $\hat c_k(t)$ are the discrete Fourier coefficients of $1/Q$.

{\it Accuracy check.}
We checked the accuracy of the numerical scheme for a Dirichlet ellipse 
as described in section~\ref{ss:ellipse} above, 
with initial data for \eqref{e:ZF} given by 
\begin{equation}
Z(\th,0) = e^{i\th}, \qquad F(\th,0)= e^{2i\th}.
\end{equation}
This corresponds to $a_1(0)=a_2(0)=1$ in \eqref{d:Omegat} and
$\alpha_1(0)=1$, $\lambda(0)=0$ in \eqref{d:phi0}.
To check how closely the solution conforms to an ellipse, we use
an explicit conformal map from ellipse to disk given by 
\begin{equation}\label{e:Nehari}
z=x+iy \mapsto w=\calW_q(z) := \sqrt{k(q)}\sn\left(\frac{2K}\pi \sin\inv z;q\right),
\quad q = \left(\frac{a-b}{a+b}\right)^2\,.
\end{equation}
Here $\sn$ is the Jacobi elliptic function with parameters $q$, $k(q)$, and $K=K(q)$, 
with notation as in \cite[p. 296]{Nehari}.
To evaluate this function, we ported the Matlab routine ELLIPJI by I.
Moiseev~\cite{elliptic} to julia.
For each system \eqref{e:ZF} and \eqref{e:QU}, we tabulate in Table~\ref{t:errs} 
the maximum pointwise error $E=E_{ZF}$ or $E_{QU}$ respectively, given by 
\[
E = \max_j |\calW_q(Z_j) - e^{i\th_j}|
\]
at time $t=0.25$, assuming the value $a=\frac1b\approx 1.278$ in \eqref{e:Nehari} is
given by $\re Z_j(t)$ with $j=0$ from the computed solution.
We specified a tolerance of $10^{-12}$ to the ODE solver for these computations.

\begin{table}
\begin{tabular}{|l | l | l|}
\hline
N & $E_{ZF}$  & $E_{QU}$ 
\\ [0.5ex] \hline
64 & 7.945e-4 & 9.475e-4
\\ \hline
128 &  8.078e-6 & 8.315e-6
\\ \hline
256 & 8.576e-9 & 3.175e-9
\\ \hline
512 & 1.342e-12 & 2.057e-13
\\ \hline
1024 & 1.898e-11 & 3.096e-14
\\ \hline
\end{tabular}
\medskip
\caption{Maximum-norm position errors for elliptical test case at $t=0.25$}
\label{t:errs}
\end{table}

\subsection{Examples with developing jets}

\subsubsection{Initial velocity with five-fold symmetry}
In our first example we take the initial shape as a circle, 
with five-fold symmetric initial velocity, corresponding to 
\[
\bm Z(w,0) = w,  \quad F(w,0) = -0.15 w^5, 
\quad \bm Q(w,0)=1, \quad \bar{\bm U}(w,0)= -0.75 w^4.
\]
We computed the solution from \eqref{e:QU} with $N=2^{14}$ grid points and plot
the solution along with a quiver plot of velocity,
at time $t=0.3$ in Fig.~\ref{fig5fold},
\begin{figure}
\includegraphics[width=4.5in]
{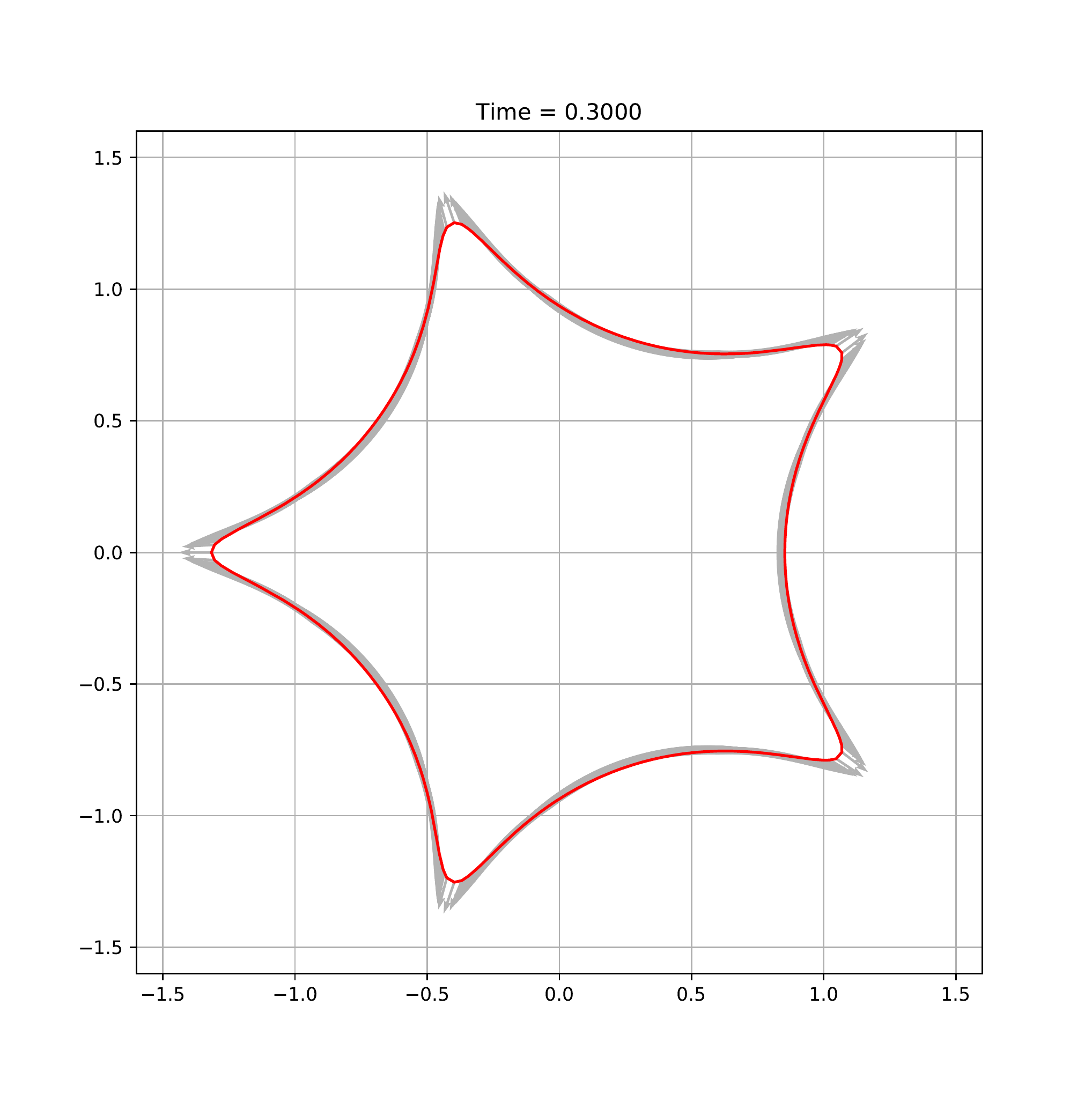}
\vspace{-0.8cm}
\caption{Interface from five-fold symmetric initial velocity}
\label{fig5fold}
\end{figure}
The interface shows the development of regions of high curvature
that may be incipient jets or corners.  The arrows, which are plotted 
at consecutive grid points, indicate that the uniformly spaced grid
on the parametrizing circle is being stretched severely as the jets develop.

\subsubsection{Initial velocity with single mode}

To study whether the protruberances that develop in the previous example 
might develop into corners, in \cite{LPmajda} we considered an initial velocity 
that produces a single tip.  This allows us to use a M\"obius transformation
to concentrate grid points in the single region of high curvature
and resolve the computation for a longer time.
Thus we solve equations \eqref{e:ZF} with initial data satisfying
\begin{equation}\label{e:nosedata}
{\mb Z}(w,0) = \zeta_r(w):= \frac{w+r}{1+rw}\,,
\qquad \re F(w,0) = 
  \left(\frac{\re \mb Z(w,0)+1}2\right)^5\,.
\end{equation}
Corresponding to compressing the grid by the factor
\begin{equation}\label{d:compress}
c = \left( \frac{1+r}{1-r}\right)^2 = 250 ,
\end{equation}
we take $r\approx0.881$.  Figure~\ref{fig1mod}, taken from \cite{LPmajda},
shows the interface computed at time $t=0.6$ with $N=1024$ points,
compared with a hyperbola of the form
\begin{equation}
\frac{(x-x_0)^2}{a^2}+\frac{y^2}{b^2}=1, 
\quad a=0.532,\ \ b=.199, \ \ x_0=2.398.
\end{equation}
This hyperbola was found using the polyfit function in julia by fitting 150
values of $Y^2$ to a quadratic function of $X$. 
\begin{figure}
\includegraphics[height=3.5in]
{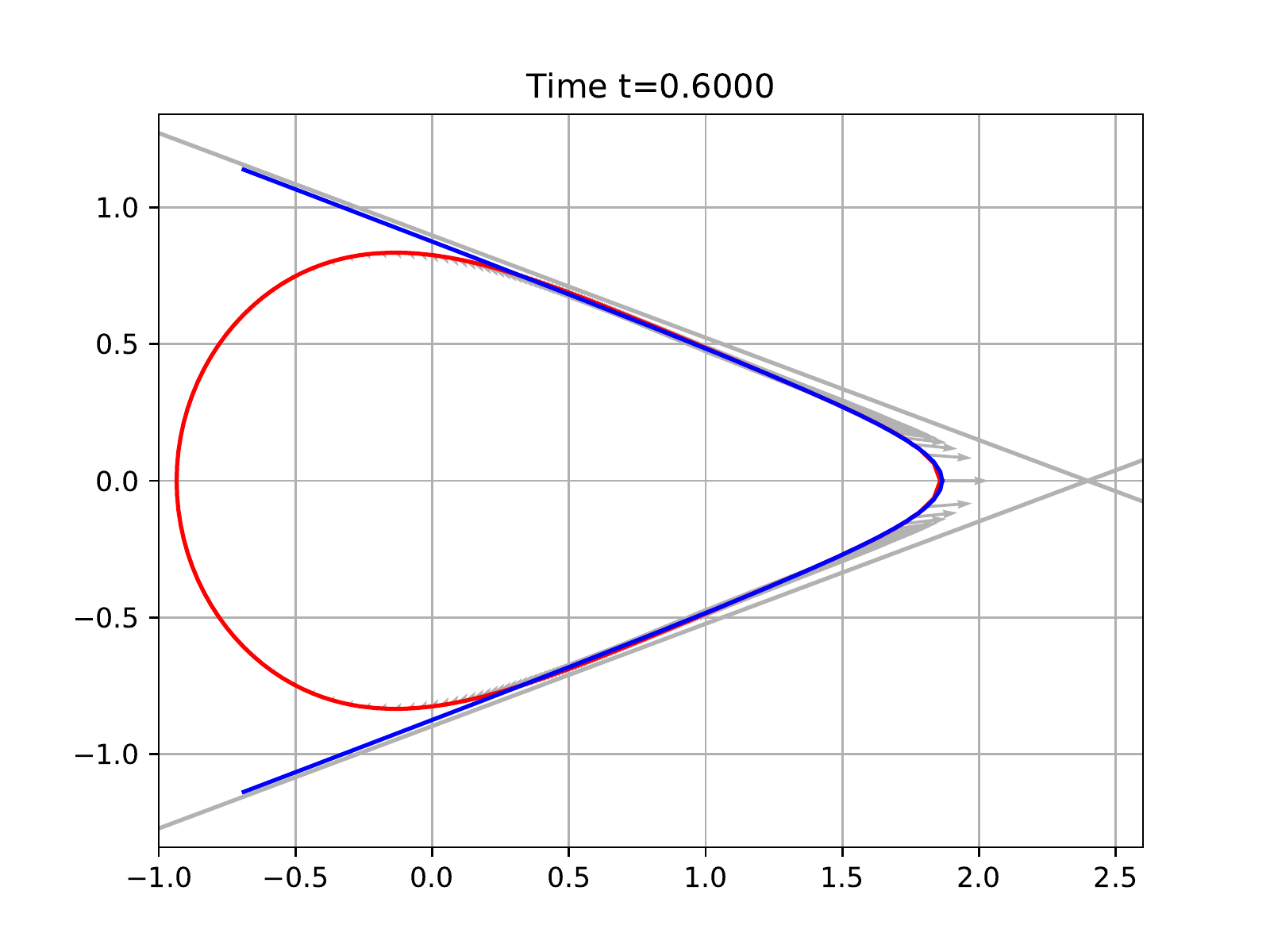}
\caption{Single-mode initial velocity with fit to hyperbola}
\label{fig1mod}
\end{figure}
%a,b = (0.5320482418801223, 0.19924499614465618)
%

The excellent fit of the hyperbola to the ``Pinocchio-like nose''
developing from the fluid domain suggests that no singularity
will ever form as time increases. Rather the nose should grow 
without bound, with decreasing angle between the asymptotes
of the hyperbola, like Longuet-Higgens' exact Dirichlet hyperbola solutions
that we described in section~\ref{s:dirichlet}.

\subsection{A scenario for corner formation}
\label{ss:corners}

\subsubsection{Initial data.} As discussed earlier, we seek to approximate a smooth flow expanding
away from a sharp corner. To do so, we will specify an initial interface that is a 
smooth approximation to a wedge-shaped domain $\Omega_\Theta$ 
with opening angle $\Theta\in(0,\pi)$ (as measured outside the fluid domain).
The wedge domain can be parametrized by the unit disk $\bbD$ by composing
the map defined by 
\begin{equation}
\zeta_\Theta(w):= w^\pow, \qquad \pow = 2-\frac{\Theta}{\pi}\in(1,2),
\end{equation}
that takes the right half plane $\Re w>0$ onto $\Omega_\Theta$, with a map
\begin{equation}
\zeta_+(w) = C_+\left(-1 + \frac{2}{1-w}\right), \quad C_+>0,
\end{equation}
that takes the unit disk onto the right half plane.

To fashion a smooth, bounded approximation to this infinite, singular domain,
we use a map that takes the unit disk slightly inside itself, 
creating a small `dimple' (with an approximately Gaussian shape)
near the point $w=-1$, according to the prescription
\[
\zeta_d(w ) = 
w \exp\left( -(I+iH)
\left(\eps_1\cos^{\pow_1}\frac\vartheta2+\eps_2\cos^{\pow_2}\vartheta\right) \right ),
\quad \vartheta = \arg(-w),
\]
where we take $\eps_1=0.1$, $\pow_1= 81$, $\eps_2=10^{-5}$, $\pow_2=20\pow_1$.
Finally, the initial interface at time $t_0=1$ is determined by the composition
\begin{equation}
\bm Z(w,t_0) = \zeta_\Theta\circ \zeta_+\circ \zeta_d\circ \zeta_r(w),
\end{equation}
where the first map $\zeta_r$ is the M\"obius automorphism of the disk $\bbD$
from \eqref{e:nosedata} and is used to concentrate points on one side of the circle,
and we take $C_+=2/\eps_1$ which results in $\bm Z(-1,t_0)\approx 1$.

For the wedge domain, a holomorphic velocity potential 
$f_\Theta(z) = z^{\alpha}$ determines a velocity field $(u,v)$ via
\begin{equation}\label{e:zalpha}
u-iv = {f'_\Theta(z)} = \alpha z^{\alpha-1} \,. %p\inv{z^{\frac1p-1}}
\end{equation}
We use this velocity formula to determine initial data for the parametrized domain via
\[
\bar{\bm U}(w,t_0) = f'_\Theta\circ\bm Z(w,t_0).
\]
On the wedge boundary where $z\in\D\Omega_\Theta$, we have 
$\arg z = \frac\pi2 \pow$ and $\arg(u+iv)=(1-\alpha)\arg z$.
If we make the choice (as was always done in \cite{LPmajda})
\begin{equation} \label{choice:a}
\alpha=\frac1\pow \,,
\end{equation}
then
$\alpha\arg z = \frac\pi2$ and the velocity is normal to the wedge boundary.
Below we exhibit examples both with and without this choice, 
corresponding to the two cases $\alpha \pow = 1$ and $\alpha \pow = \frac34$.

\begin{figure}
\phantom{h}\hspace{-0.3cm}
\includegraphics
[height=3.0in]
{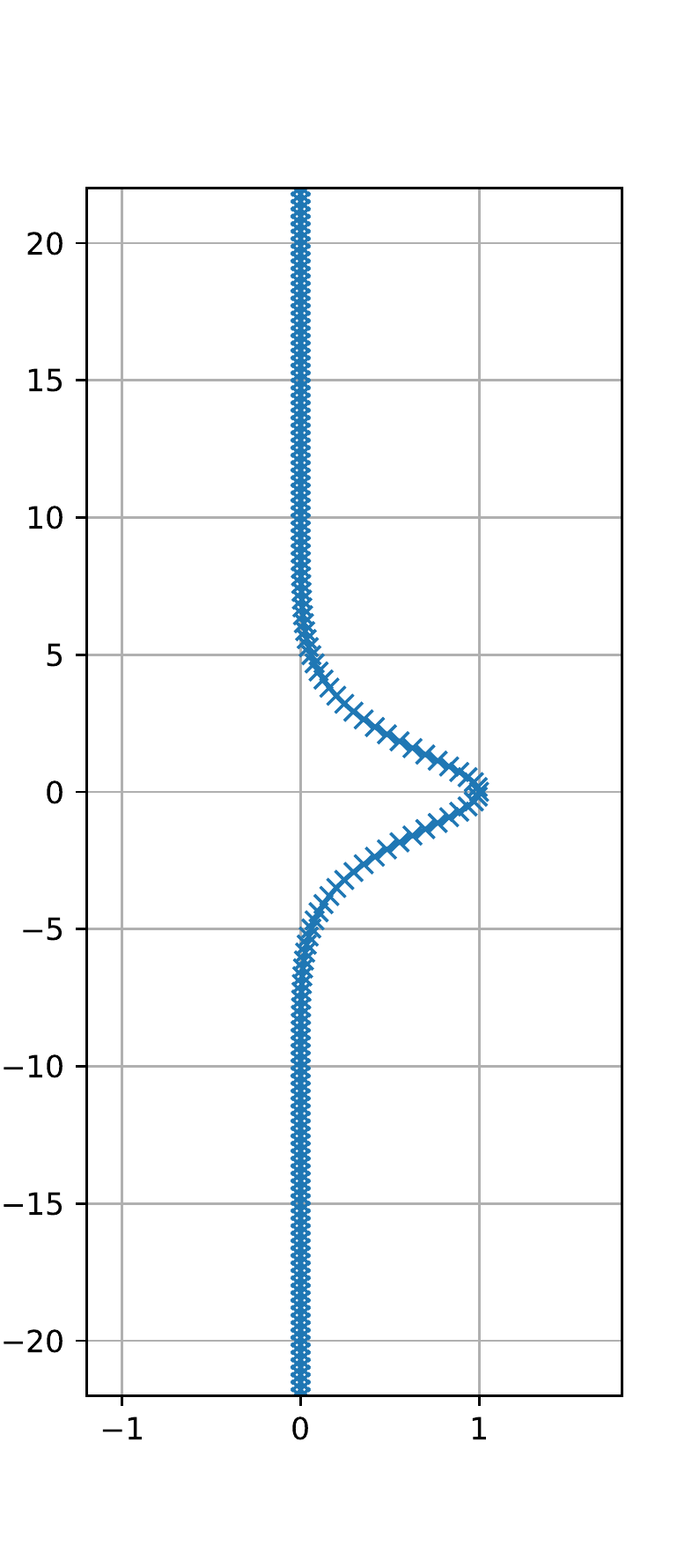}
%\hspace{-0.8cm}
\qquad
\includegraphics
[height=3.0in]
{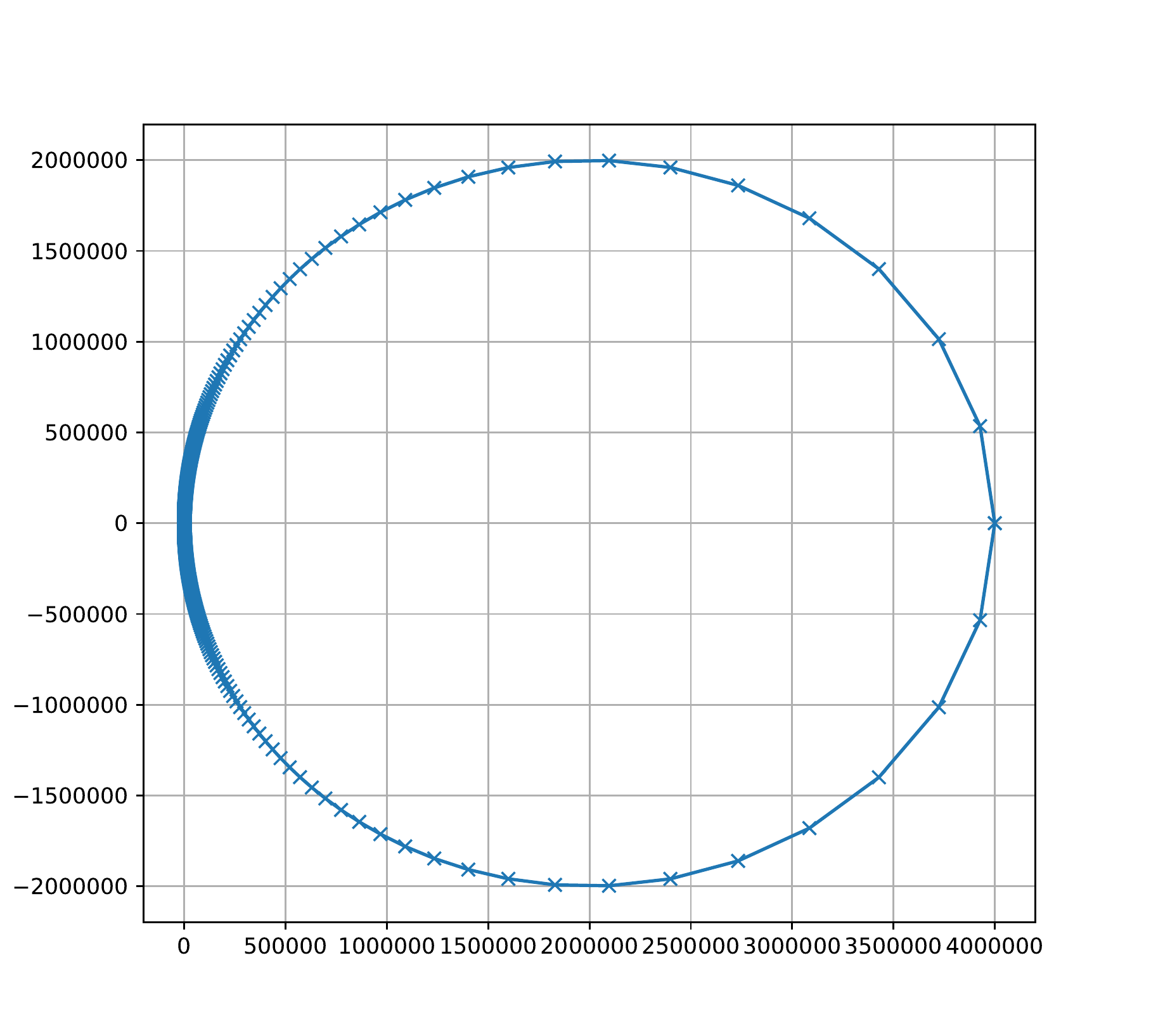}
\caption{The interface $Z(\th,t_0)$ for $\pow=1$, $N=2^{15}$ }
\label{figdimp}
\end{figure}

{\em Parameters.} 
For all the computations reported here, we discretize $\theta\in[0,2\pi)$ using $N=2^{15}$ points. 
We take the grid compression ratio in \eqref{d:compress} to be 
$c=20000$ in the case $\alpha \pow=1$,
and $c=4000$ in the case $\alpha \pow=\frac34$, and determine $r$ accordingly.

In Fig.~\ref{figdimp} we illustrate the effect of the regularizing map $\zeta_d$
by taking $\pow=1$ and plotting $Z(\th,t_0)=\bm Z(e^{i\th},t_0)$.
The left panel indicates there is a large region around $0$ where the interface is 
very close to flat, aside from a roughly Gaussian-shaped depression.
The right panel shows how the behavior of the interface near infinity is regularized 
by the factor in $\zeta_d$ with $\eps_2$, yielding $\bm Z(1,t_0)\approx 4/(\eps_1\eps_2)=4\times10^6$.

In Fig.~\ref{f:IC60deg} we plot $-X$ vs $Y$ and corresponding velocity 
for the initial interface in the case $\Theta=60^\circ$, $\pow=\frac53$, $\alpha \pow=\frac34$.
In this orientation, `water' is below `air' in the zoomed-in left panel. 
The arrows inidicate the initial velocity in the case $\alpha \pow=1$,
but we use the same initial interface also in the case $\alpha \pow=\frac34$.

%%------------------------
\begin{figure}
\phantom{h}\hspace{-0.3cm}
\includegraphics
[width=3.0in]
{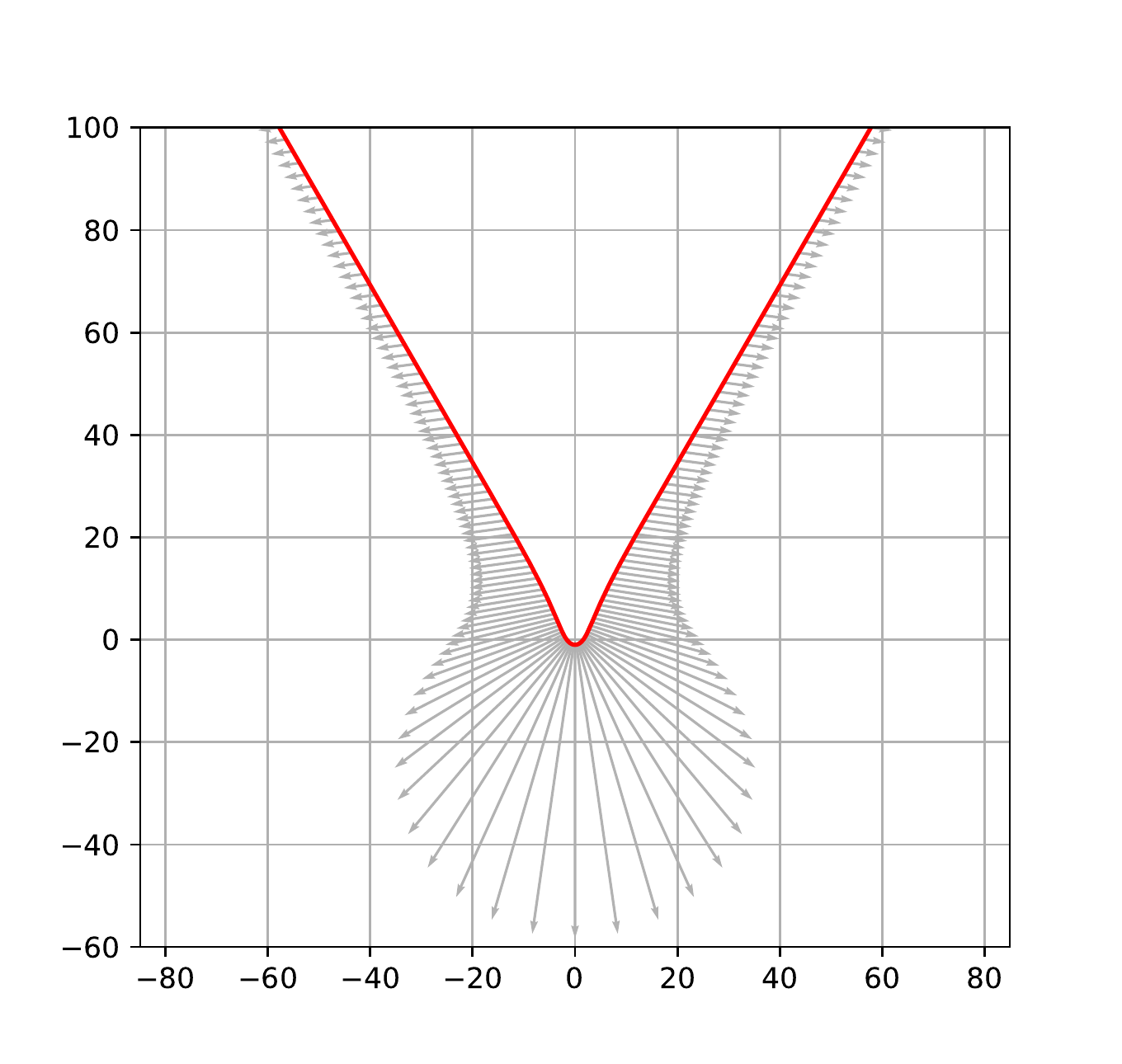}
%\hspace{-0.8cm}
\includegraphics
[width=3.0in]
{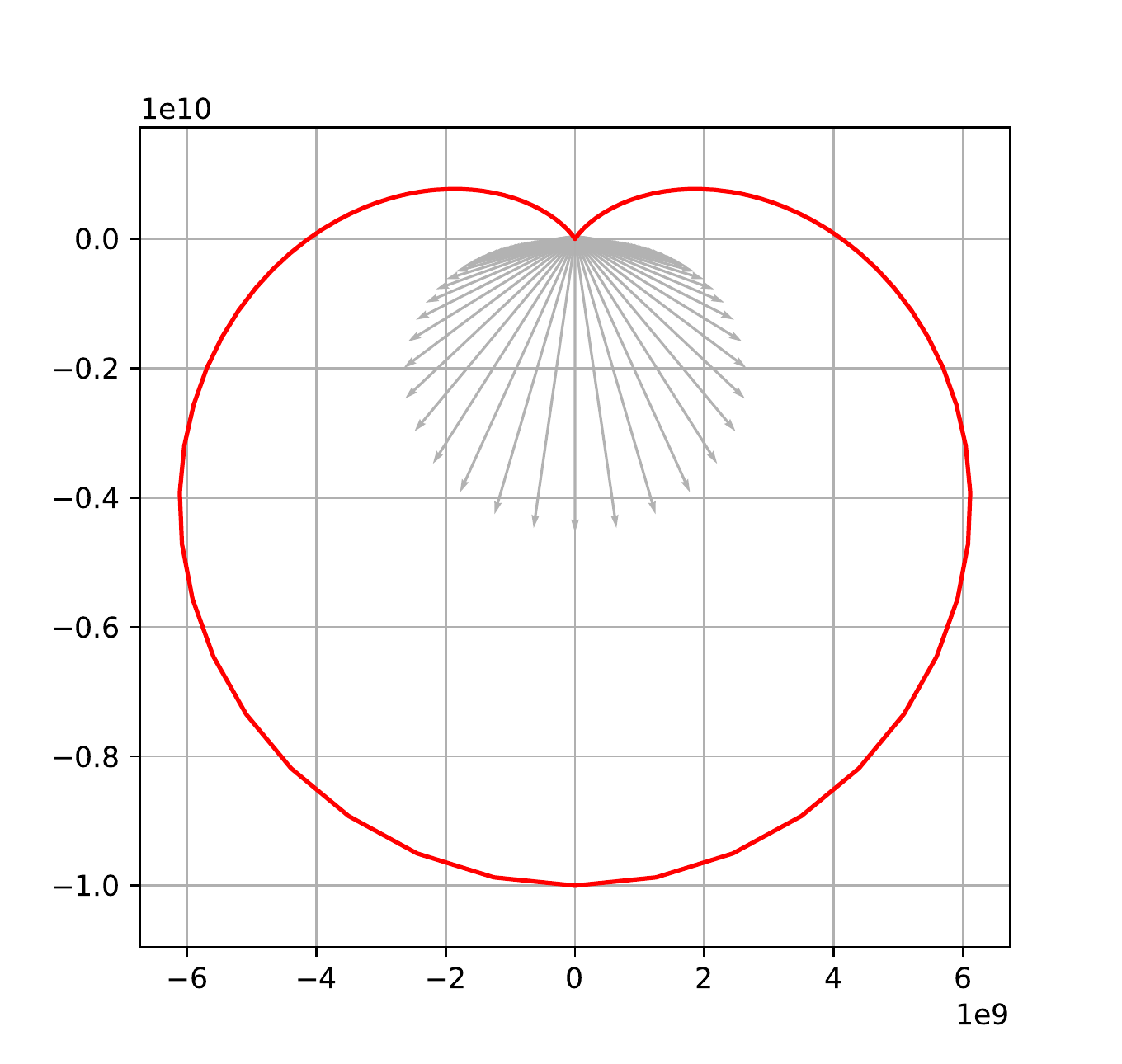}
%\put(-331,0){\large $y$}
%\put(-454,87){\large $-x$}
\caption{Initial interface for $\Theta=60^\circ$ 
and velocity for $\alpha =\frac9{20}$ in `water' orientation}
\label{f:IC60deg}
\end{figure}

\subsubsection{Time evolution.} The computations reported here are carried out using 
the (less singular) variables $\bm Q=1/\bm Z_w$, $\bm V=\bar{\bm U}$ and 
$\bm S=\bm Z^{-1/\nu}$ satisfying equations \eqref{e:QU} and \eqref{e:S}. 
The results are generally consistent with those reported in 
\cite{LPmajda} which were performed using equations \eqref{e:ZF} on somewhat less singular domains.
E.g., Fig.~\ref{f:IC60deg} indicates that at the initial time $t_0=1$,
$\bm Z(1,t_0)\approx 10^{10}$ whereas this was $\approx 10^3$ for the case
$\Theta=90^\circ$ considered in \cite{LPmajda}.

\nwc{\cwidth}{4.5in}
\begin{figure}[ht]
\includegraphics[width=\cwidth]
{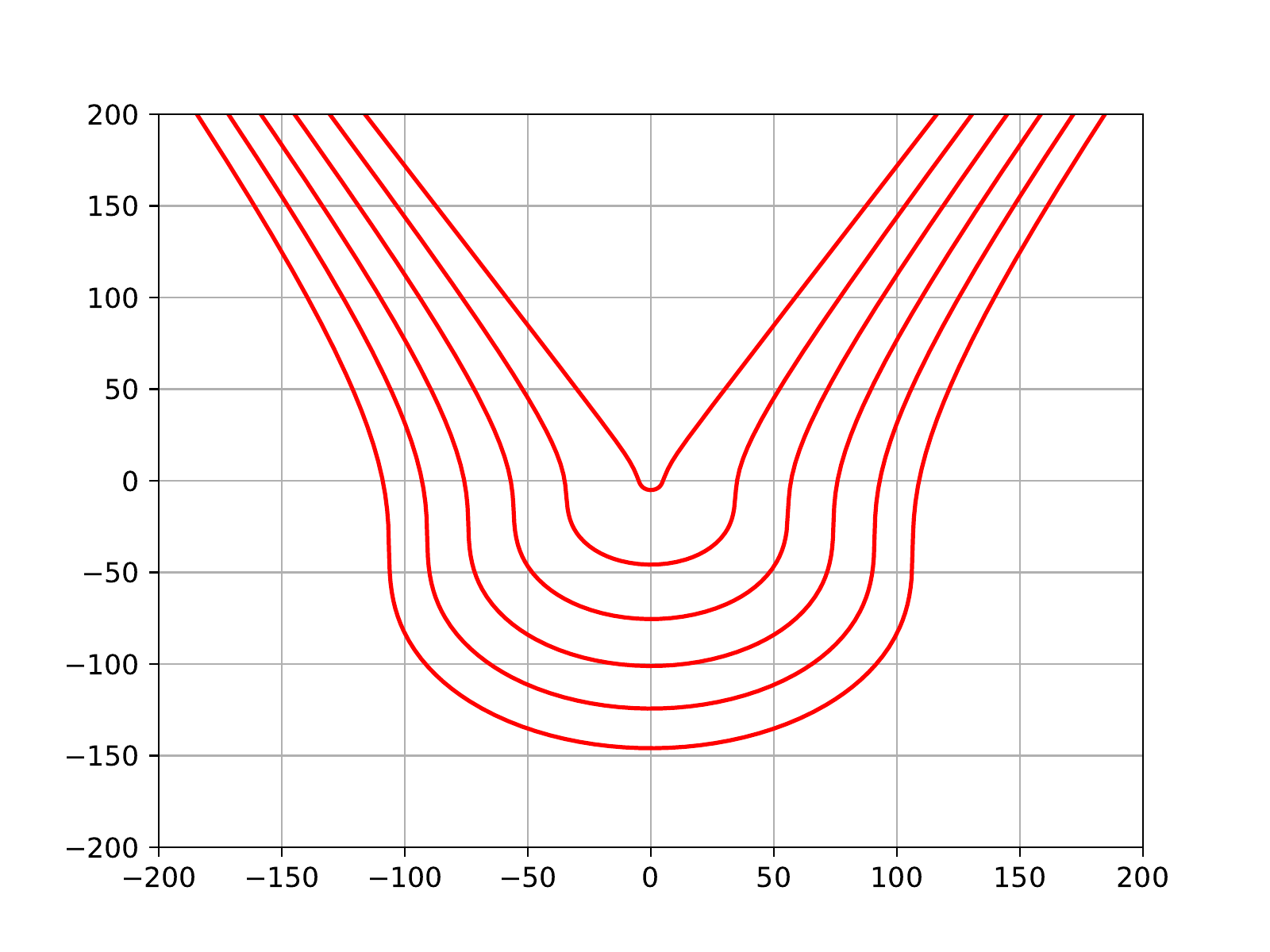}
\put(-161,7){\large $y$}
\put(-320,121){\large $-x$}
\caption{Interface for $t=10$ and $200n$ for $1\le n\le5$, with $\Theta=60^\circ$, $\alpha =\frac35$. }
\label{f:60deg}
\end{figure}

\begin{figure}[ht]
\includegraphics[width=\cwidth]
{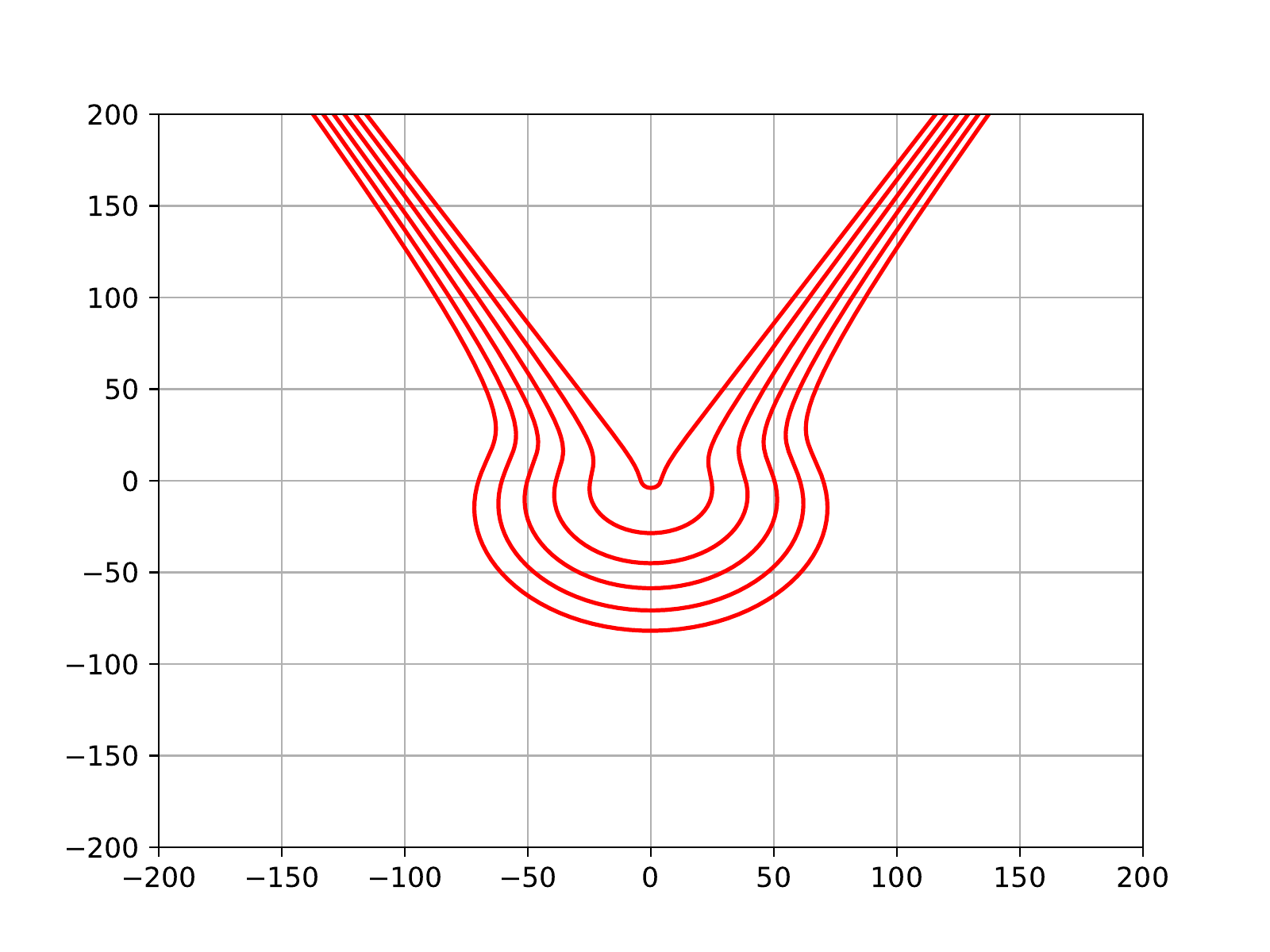}
\put(-161,7){\large $y$}
\put(-320,121){\large $-x$}
\caption{Interface for $t=10$ and $200n$ for $1\le n\le5$, with $\Theta=60^\circ$, $\alpha =\frac9{20}$. }
\label{f:60deg075}
\end{figure}

As shown in Fig.~\ref{f:60deg} (for $\alpha \pow=1$) 
and in Fig.~\ref{f:60deg075} (for $\alpha \pow = 0.75$), 
the interface expands away from the origin and the curvature decreases by a large factor.
Upon rescaling as described with small $\eps$ as described above, this is consistent
with the possible development of an interface with an initial tiny radius of curvature 
to one with radius of curvature hundreds or thousands of times larger.

\subsubsection{Evidence for self-similarity.}
We emphasize here that the Euler equations here are invariant under scaling time and space by the same factor.
Thus, although we take $\bm Z(-1,1)\approx 1$ in our computations, this corresponds to 
another solution with $\bm Z(-1,\eps)\approx \eps$ for any $\eps>0$.

The numerical results above and in \cite{LPmajda} lead us to expect
a power-law scaling in time with $\bm Z(-1,t)\sim ct^\beta$ for some 
$c,\beta>0$.  
In Figs.~\ref{f:scaledZinv} 
and \ref{f:scaledZinv075} 
we plot the {\em reciprocals} of 
time-scaled interfaces in the water orientation, 
for the choice (explained below)
\begin{equation} \label{e:betaalpha}
\beta = \frac 1{2-\alpha}\,.
\end{equation}
For $\alpha=\frac1\pow=\frac35$ this yields $\beta = \frac57$,
and for $\alpha = \frac{3}{4\pow}$ we get $\beta = \frac{20}{31}.$
Precisely, we plot 
\begin{equation}\label{e:scaleZinv}
\frac{-i t^\beta}{Z(\theta,t)}
\end{equation}
at a sequence of times $t=t_n$ for $n=1,2,\ldots 10$ 
($t_n = 500n$ for $\alpha \pow=1$, $t_n=1000n$ for $\alpha \pow=\frac34$).
These plots allows one to visualize the entire inverted fluid domain, 
with the origin of the plot corresponding to the fluid far field.

\begin{figure}
\includegraphics[width=\cwidth]
{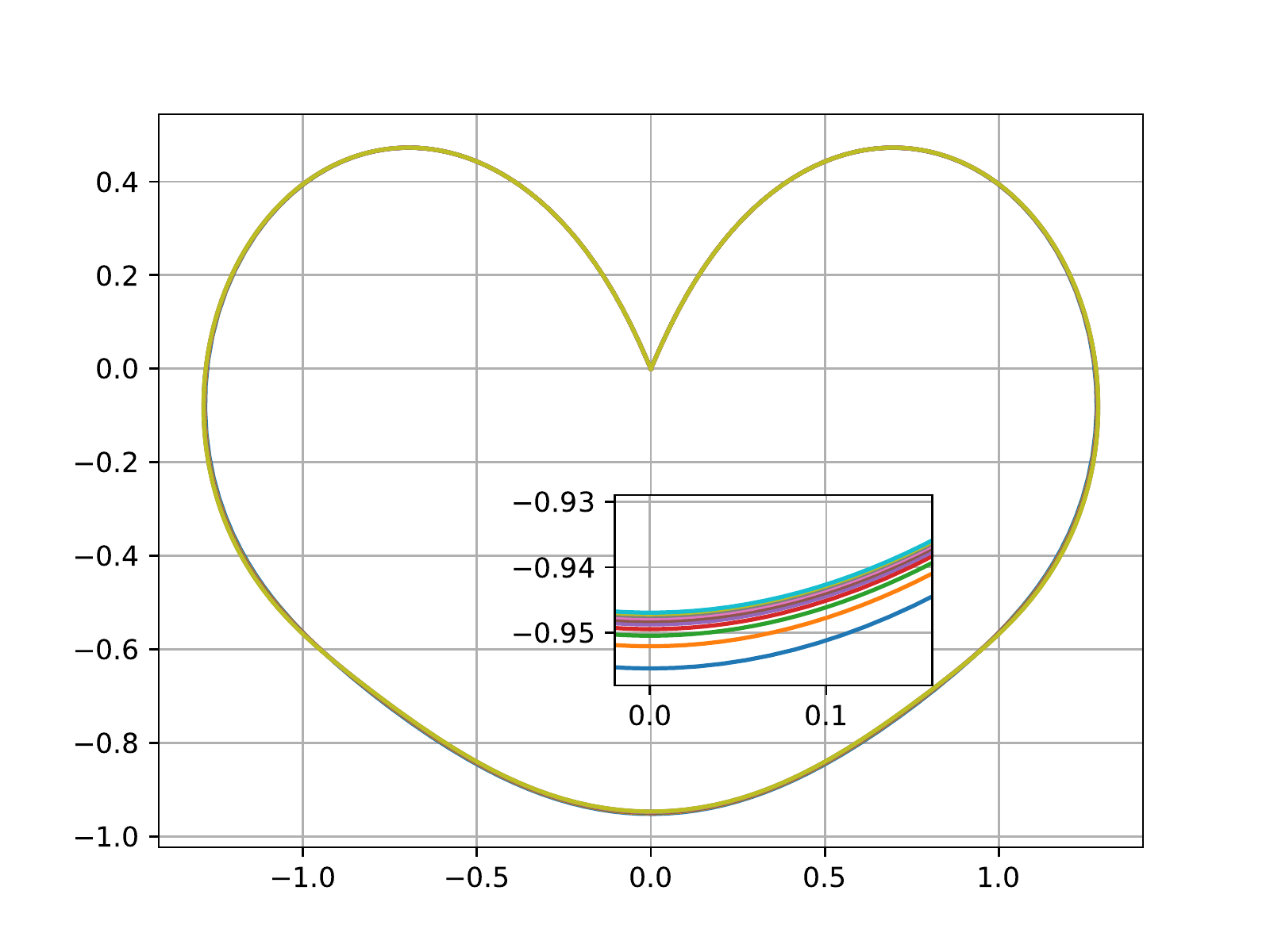}
\caption{Scaled inverse interfaces at $t= 500n$, $1\le n\le10$, $\Theta=60^\circ$,
$\alpha =\frac35$}
\label{f:scaledZinv}
\end{figure}

\begin{figure}
\includegraphics[width=\cwidth]
{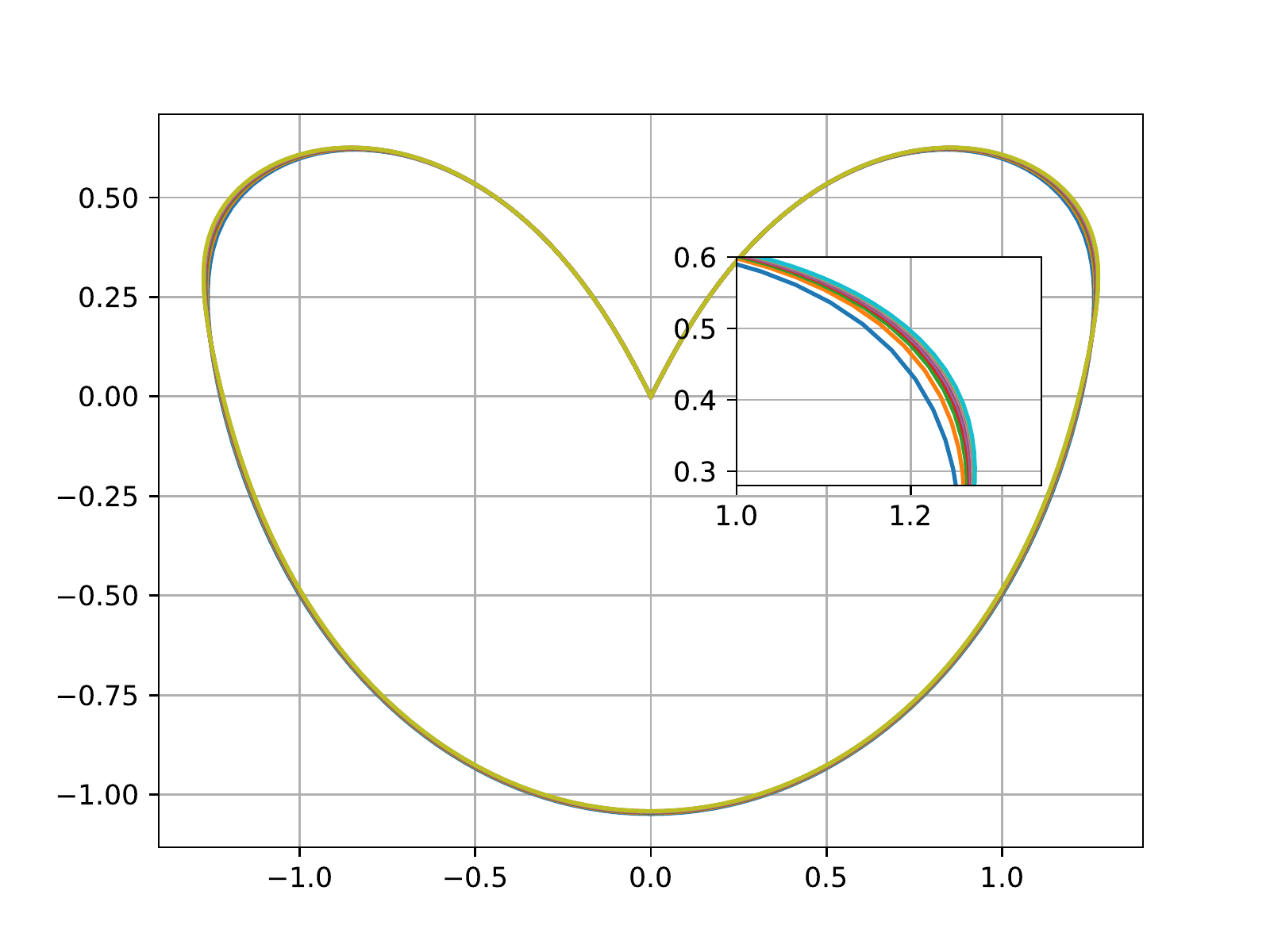}
\caption{Scaled inverse interfaces at $t= 1000n$, $1\le n\le10$, $\Theta=60^\circ$,
$\alpha =\frac9{20}$}
\label{f:scaledZinv075}
\end{figure}

The plots demonstrate a very tight collapse, with the zoomed-in view 
suggesting convergence to an invariant limit shape. 
By the scaling invariance argument mentioned above, 
this result suggests the possible existence of a 
self-similar solution starting at time $t=0$ from the exact wedge 
$\Omega_\Theta$ with initial power-law potential $f_\Theta(z)=z^\alpha$.

\subsubsection{Scaling argument}
We can provide a two-step heuristic argument 
to explain the scaling exponent $\beta$ in \eqref{e:betaalpha}.
Motivated by the results above, it is natural to seek a self-similar solution
to the governing equations \eqref{1.laplace}--\eqref{1.pzero} by scaling
the space variables and potential according to 
\begin{equation}
(x,y) = (t^\beta \tilde x, t^\beta\tilde y), \qquad
\phi(x,y,t) = t^\gamma\tilde\phi(\tilde x,\tilde y)\,. 
\end{equation}
Simple substitution shows that the linear terms in the 
Bernoulli equation \eqref{1.bernoulli} can balance 
with the nonlinear terms on the boundary only if
\begin{equation} \label{e:gbrel}
\gamma = 2\beta -1\,. 
\end{equation}
This is the first step. In the second step, writing
$\tilde\phi = \re\tilde f(\tilde z)$ with $\tilde z=\tilde x+i\tilde y$,
we find that the initial data requirement in \eqref{e:zalpha}
as $t\downarrow0$ imposes the limit relation
\begin{equation}\label{e:ulimit}
u - iv = t^{\gamma - \beta}\tilde f'(t^{-\beta} z) \to \alpha z^{\alpha-1}
\end{equation}
when taking $t\downarrow0$ with $z=x+iy$ fixed.
Thus we can expect that 
\[
\tilde f'(\tilde z) \sim \alpha \tilde z^{\alpha-1}
\quad\mbox{as $\tilde z=t^{-\beta}z\to\infty$},
\]
which entails the relation
\begin{equation}\label{e:gabrel} 
\gamma = \alpha\beta . 
\end{equation}
Putting \eqref{e:gabrel} together with \eqref{e:gbrel} yields \eqref{e:betaalpha}.

\section{Discussion}

Our experience in computing protruberances that emerge from the main body
of fluid is consistent with the observations and suggestions of 
Longuet-Higgens, who pointed out that such protrusions may often
take the form of Dirichlet hyperboloids. 
Such hyperboloidal jets exist with narrowing ``nose'' that persist and 
remain smooth forever, despite lacking any regularizing 
effect from surface tension, viscosity, or gravity. 
Thus ideal droplets with smooth boundary do not seem to readily 
form local singularities, consistent with the analytical constraints 
provided in the work of Kinsey and Wu~\cite{KinseyWu2018} and Wu~\cite{wu2015blow}.

However, our computations presented in section \ref{ss:corners} 
strongly suggest that corner formation may occur in an unstable manner
for specially prepared initial data in bounded domains.
After time-reversal, it appears that a smooth interface may emerge from 
initial data containing a corner.

``Zooming in,'' it appears plausible that a corner can form in
an asymptotically self-similar way, with a rather general exterior angle 
and power-law velocity profile approaching the corner.
Thus we conjecture that a two-parameter family of 
self-similar solutions of the ideal droplet equations may exist
in infinite, asymptotically wedge-shaped domains. 
Determining whether such solutions do indeed exist appears to be a difficult
challenge for analysis.
We point out that if such solutions exist with $x$-intercept proportional
to $t^\beta$, then the acceleration of the interface at this point
would blow up strongly, like $t^{\beta-2}$.

\section*{Acknowledgements}
The authors are grateful to Sergey Gavriluk for historical references 
regarding ellipsoidal solutions.
This material is based upon work supported by 
the National Science Foundation under 
grants DMS 1812673 and DMS 2106988 (JGL)
and grants DMS 1812609 and DMS 2106534 (RLP).

%----------------------------------------------
%\vfill
%\pagebreak

\appendix

\section{Conformal mapping formulation and least action principles}\label{sec:setup}

For water wave equations with gravity,
conformal mapping formulations of time dependent flows were
described and employed for purposes of analysis by, for example,
Ovsjannikov \cite{Ovs71}, Kano \& Nishida \cite{KN79},
and Wu \cite{Wu97},
and recently by Hunter et al. \cite{HIT16}.
For purposes of numerics and formal analysis such conformal formulations
were described by Dyachenko {\it et al.} \cite{dzk96}, 
Chalikov and Sheinin \cite{Chalikov96},
Choi and Camassa \cite{Choi99}, and were extensively developed recently 
in \cite{Dyachenko2016,DyachenkoNewell2016,Lushnikov2017}.

Our aim in this appendix is to provide a pedagogical derivation of
the ideal droplet equations in terms of a time-dependent conformal mapping of the
disk $\bbD$ onto the fluid domain $\Omega_t$, 
especially taking into account the freedom of choice in parametrization 
afforded by the automorphism group of $\bbD$.
Using a special, simple choice that fixes one point of the map from $\bbD$ to $\Omega_t$, 
a conformal mapping formulation for droplets was written already by
Ovsjannikov \cite{Ovsjannikov71} in complex form with zero surface
tension and body force. 

The resulting conformal ideal droplet equations
form a type of water wave system with one spatial dimension, 
governing the time evolution of the boundary parametrization and the 
velocity potential at the boundary.  The equations are nonlocal and nonlinear,
but all nonlocality is expressed in terms of the $2\pi$-periodic 
Hilbert transform $H$ on the real line, i.e., on $\bbT=\R/2\pi\Z$.

\subsection{Ideal droplet equations}
We consider a two-dimensional drop of fluid that at time $t$ occupies a domain $\Omega_t$
conformally equivalent to the unit disk $\bbD$ in the complex plane. 
We suppose the fluid is inviscid and the motion incompressible. 
Let $u=(u_1, u_2)$ denote the velocity field and $p$ the pressure. 
We assume that the external body force $-\nabla \bodypot$ is
conservative, arising from a potential $\bodypot$. 
Then inside $\Omega_t$ we have the Euler equations
\begin{gather}\label{eq:incompeuler}
u_t+u\cdot\nabla u+\nabla p=-\nabla \bodypot,\ \ \ \ \nabla\cdot u=0.
\end{gather}
Assuming the flow is irrotational, the velocity must be the gradient of a potential $\phi$,
satisfying $u=\nabla\phi$, with $\phi$ harmonic in $\Omega_t$. 
As is usual, because 
$u\cdot\nabla u=\frac{1}{2}\nabla|\nabla\phi|^2$, 
upon integration the Euler equations \eqref{eq:incompeuler} yield the Bernoulli equation 
\begin{equation}\label{e:Bernoulli1}
    \phi_t+\frac{1}{2}|\nabla\phi|^2+p+\bodypot=c(t) 
\end{equation}
in $\Omega_t$, where $c(t)$ is a real constant in space.
The value of $p$ on the boundary $\partial\Omega_t$ is determined by the surface tension
\begin{gather}\label{e:pkappa}
p=\gamma\kappa,
\end{gather}
where $\gamma$ is the surface tension coefficient and $\kappa$ is the (mean)
curvature of $\partial\Omega_t$, chosen so the curvature of a circle is positive.
The equations of motion are completed by specifying a kinematic equation which
states that the normal velocity of the fluid domain $\Omega_t$ agrees with
the normal component of fluid velocity $\nabla\phi$.

\subsection{Conformal mapping formulation}

\subsubsection{Notations and basic notions.}
We let $(x,y)$ denote Eulerian coordinates inside the 
fluid drop $\Omega_t$ and write $z=x+iy$.
The function $f=\phi+i\psi$ denotes the (holomorphic) 
complex velocity potential, so the Cauchy-Riemann equations
$\phi_x=\psi_y$, $\phi_y=-\psi_x$ relate the stream function $\psi$
to $\phi$.  The complex fluid velocity then satisfies
\begin{equation}\label{e:fz}
    u = u_1+iu_2 = \bar f_z.
\end{equation}
We parametrize the fluid domain $\Omega_t$ at time $t$ by a holomorphic
bijection %\footnote{holomorphism?}
 $\mb Z(\cdot,t)\colon \bbD\to \Omega_t$ denoted
\[
    z=\mb Z(w,t) = \mb X(w,t)+ i\mb Y(w,t) \,, \qquad w\in \bbD\,.
\]
In terms of the conformal variable $w$, the (holomorphic) velocity
potential and (anti-holomorphic) velocity are written
\[
    \mb F(w,t) = \mb\Phi(w,t)+i\mb\Psi(w,t) = f\circ\mb Z \,, \qquad
    \mb U(w,t)=u\circ\mb Z\,. 
\]
Here the notation $f\circ\mb Z$ indicates $f(\mb Z(w,t),t)$, etc.
In these terms, the relation \eqref{e:fz} is expressed as
\begin{equation}\label{d:barU}
    \bar{\mb U} = \frac{\mb F_w}{\mb Z_w}\,.
\end{equation}
On the boundary $S^1=\partial \bbD$ of the disk we write $w=e^{i\th}$ with 
$\th\in\bbT={\mathbb R}/{2\pi\mathbb Z}$.  The key dependent variables
in the conformal formulation are the traces of $\mb Z$ and $\mb F$ on $\partial\bbD$,
and we write
\begin{equation}
    Z(\th,t)=X(\th,t)+iY(\th,t) = \mb Z(e^{i\th},t)\,, \qquad
    F(\th,t)=\Phi(\th,t)+i\Psi(\th,t) = \mb F(e^{i\th},t)\,. \qquad
\end{equation}

\subsubsection{Fourier and Hilbert transforms.}
We will write the Fourier expansion of a general square-integrable function $G\colon\bbT\to \C$ as 
\[
    G(\th) = \sum_{k\in\Z} \hat G_k e^{ik\th} \,, 
    \qquad \hat G_k = \frac1{2\pi}\int_\bbT G(\th)e^{-ik\th}\,d\th\,.
\]
Such a function has a holomorphic extension $\mb G$ inside the disk $\bbD$ if and only if all
negative modes vanish, meaning $\hat G_k=0$ for all $k<0$.
If we write 
\[
G=\hat G_0+A+iB
\] where $A$ and $B$ are real, then this means
$B=HA$ and $HB=-A$, where $H$ is the Hilbert transform on $\bbT$, satisfying
\[
HG(\theta) = \sum_{k\ne0} (-i\sgn k)\hat G_k e^{ik\th} = 
{\mathop{\rm p.v.}} 
\int_{\bbT}  G(s)\cot\left(\frac{\th-s}2\right)\frac{ds}{2\pi}\,.
\]
(Note however that $-H^2$ is not the identity on $\bbT$, unlike the Hilbert transform on $\R$.)
Thus $G$ has an extension $\mb G$ holomorphic in $\bbD$ exactly when 
$\hat B_k=(-i\sgn k)\hat A_k$ for all $k\ne0$,
and in this case 
\begin{equation}\label{e:ABk}
\mb G(w) = \sum_{k\ge0} \hat G_k w^k \quad\mbox{ for all $w\in\bbD$,}
\qquad
\hat G_k=\hat A_k+i\hat B_k=2\hat A_k \quad\mbox{ for all $k>0$. }
\end{equation}

\subsubsection{Choice of conformal map I} 
Because of the well-known characterization of holomorphic automorphisms of the disk $\bbD$ as
having the form
\[
g(w) = e^{ia} \frac{\alpha-w}{1-\bar\alpha w},\qquad a\in\bbT, \ \ \alpha\in\bbD,
\]
there is some freedom in the choice of the conformal map $\mb Z(\cdot,t)\colon\bbD\to\Omega_t$. 
By a well-known version of the Riemann mapping theorem \cite[Theorem 3.3.1]{ss10}, 
$\mb Z$ will be uniquely determined once we specify the image of the origin
\begin{equation}\label{d:Z0}
\mb Z_0(t):=\mb Z(0,t)=\hat Z_0(t)
\end{equation} 
(within the fluid domain), 
and the argument of the derivative $\mb Z_w(0,t)$, which we denote by $\th_1(t)$.
Using the Cauchy integral formula we calculate
\begin{equation}\label{d:beta}
    \mb Z_w(0,t) = \frac1{2\pi i}\int_{S^1} \frac{\mb Z}{w^2}\,dw = 
\frac1{2\pi} \int_{\bbT} Z e^{-i\th}\,d\th =\hat Z_1(t)\,,
\end{equation}
and observe that $\th_1(t)$ evolves according to 
\begin{equation}\label{e:argt}
\th_1'(t) 
    %= \frac{d}{dt} \arg \bm\beta(t) =\Im\frac{\bm\beta'}{\bm\beta} .
    = \frac{d}{dt} \arg \hat Z_1(t) =\Im  \frac{\hat Z_1'}{\hat Z_1}\,.
\end{equation}

\medskip
\subsubsection{Holomorphic kinematic equation.}\label{sss:Hkinematic}
The kinematic condition that the conformal image of $\bbD$ follows the fluid
motion corresponds to the condition that the normal component of coordinate
velocity $\mb Z_t$ matches that of the fluid velocity on $\partial\bbD$.
Thus we should explain how $\mb Z_t$ is determined by a given normal
velocity law up to conformal automorphisms of $\bbD$.

We define (non-normalized) tangent and outer normal vectors to the boundary 
of $\Omega_t$ by 
\begin{equation}
    \tau(\th,t)=Z_\th = \frac{\partial}{\partial\theta} \mb Z(e^{i\th},t)\,, \qquad
n(\th,t) =-i Z_\th\,.
\end{equation}
These functions are traces on $\partial\bbD$ of the respective holomorphic functions
\begin{equation}\label{e:holtn}
\bm\tau(w,t) = iw\mb Z_w  \,,\qquad
%\quad (\mbox{trace}\ \tau(\th)=Z_\th), \qquad
\bm n(w,t) = w\mb Z_w\,, \quad w\in\bbD\,.
%\quad (\mbox{trace}\ n(\th) =-i Z_\th).
\end{equation}
Next we introduce normal and tangential components of $Z_t$ (non-normalized) by writing 
\begin{equation}\label{e:barnZt}
\bar n Z_t = \upsilon^n+i\upsilon^\tau\,, \qquad
\upsilon^n = \Re \bar n Z_t \,, \quad 
\upsilon^\tau = \Im \bar n Z_t = \Re\bar \tau Z_t \,.
\end{equation}

Because $\mb Z_t$ is holomorphic in $\bbD$, it is essentially determined by 
the normal component $\upsilon^n$ up to automorphism, as follows. 
Because $\mb Z(\cdot,t)$ is presumed injective on $\bbD$,
the derivative $\mb Z_w$ cannot vanish inside $\bbD$. 
(For an in-depth discussion of injectivity see subsection~\ref{ss:criteria} below.)
Hence we can write
\begin{equation}\label{e:Zth}
\mb Z_t = \mb Z_w\mb G \,,
\end{equation}
where $\mb G$ is holomorphic in $\bbD$. Substituting into \eqref{e:barnZt} and using \eqref{e:holtn}, 
we infer
that on $\D\bbD$,
\begin{equation}\label{e:Grel}
     \frac Gw =
\frac{\upsilon^n}J+i\frac{\upsilon^\tau}{J}   \,,
\qquad J:= \bar n n = |Z_\th|^2\,.
\end{equation}
This expression is holomorphic in $\bbD$ except for a pole at 0 with residue $\hat G_0=\mb G(0,t)$.
Then it follows
(noting that the imaginary part of the constant term is not recovered by $iH$) 
\begin{align}
\frac{G-\hat G_0}w 
&= (I+iH)\re\left( \frac{G-\hat G_0}w \right) + i\im \hat G_1 \,.
\label{e:Gw1}
\end{align} 
Since $(I+iH)\re (\hat G_0/w)=(I+iH)\re (w\overline{\hat G_0 })=
w\overline{\hat G_0 }$ on $\D\bbD$, we infer

\begin{equation}\label{e:Gsolv1}
G = w(I+iH)\re\left(\frac{G}w\right) + \upsilon_0(\theta,t) \,,
\end{equation}
where 
\begin{align}
\upsilon_0(\theta,t) &= 
\hat G_0(t) + iw\im\hat G_1(t)- w^2\overline{\hat G_0(t)} \,.
\label{e:v0}
\end{align}
The formulas \eqref{e:Gsolv1}--\eqref{e:v0} express $G$ clearly 
as the boundary trace of a holomorphic function.
The coefficients $\hat G_0$, $\im\hat G_1$ are associated with the 
choice of conformal map as expressed in \eqref{d:Z0} and \eqref{e:argt}, via the relations
\begin{equation}\label{d:alpha}
 \hat G_0 = 
    \frac{\mb Z_{t}(0,t)}{\mb Z_w(0,t)}  =  \frac{\hat Z_0'}{ \hat Z_1} \,,
\end{equation}
\begin{equation}
    \im\hat G_1 = \im\left(\frac{\mb Z_{wt}}{\mb Z_w} - \frac{\mb Z_{ww} \mb Z_t}{\mb Z_w^2} \right)_{w=0}
    = \theta_1'(t) - \im \frac{2\hat Z_2\hat Z_0'}{\hat Z_1^{2}} \ .
\end{equation}
In this way, $\mb Z_t$ is determined by the (non-normalized) normal velocity
$\upsilon^n$ and the functions $\theta_1'$ and $\hat Z_0'=\mb Z_0'$, which may
be freely specified.

Finally, the physical kinematic relation requires that 
the normal component of $\mb Z_t$
matches that of the complex fluid velocity $\mb U$ from \eqref{d:barU}, which means simply that
\begin{equation}\label{e:match}
\upsilon^n = %\Re \bar{\bm n}{\mb Z}_t = 
\Re \bm n \bar{\mb U} = \re w\mb F_w =
%\Re \frac{\mb F_w}{\mb Z_w}w\mb Z_w = 
\Re(-iF_\th)=\Psi_\th  \,.
\end{equation}
This then determines $G$ as in \eqref{e:Gsolv1} with
\begin{equation}
\re\left(\frac Gw\right) = \frac{\Psi_\theta}{J}.
\end{equation}

\subsubsection{Holomorphic Bernoulli equation.} 
Because $\phi_t=\Re f_t$ and  $|\nabla\phi|^2 = |f_z|^2$,
the usual Bernoulli equation \eqref{e:Bernoulli1} 
reads
\begin{equation}\label{e:Bernoulli}
\Re f_t +\frac{1}{2}|f_z|^2 + p + \bodypot = c(t)\,.
\end{equation}
Recalling that $\mb F(w,t)=f(\mb Z(w,t),t)$, we compute
$ \mb F_t = f_t\circ \mb Z + (f_z\circ\mb Z)\mb Z_t $, so that
\[
\mb R :=
- f_t \circ \mb Z 
= -\mb F_t + \frac{\mb F_w \mb Z_t}{\mb Z_w} 
= -\mb F_t + \mb F_w \mb G \,,
\]
\[
\frac12 |f_z\circ \mb Z|^2 
=\frac12 \left|\frac{\mb F_w}{\mb Z_w}\right|^2 
    = \frac12 \frac{|F_\th|^2}{|Z_\th|^2} 
= 
\frac{\Phi_\th^2+\Psi_\th^2}{2J} 
\,.
    \]
Because $\mb R$ is holomorphic, we can use the last equation with \eqref{e:Bernoulli} 
to express the Bernoulli equation in holomorphic form as 
\begin{equation}\label{e:Ft}
    { %\boxed{
    \mb F_t  = \mb F_w \mb G %(\bm\alpha+w\mb G)
%\left(w\mb G +\frac{ \mb Z_0'}{\mb Z_w}  \right)
- \mb R \,, }
\qquad R = (1+iH)\left( 
\frac{\Phi_\th^2+\Psi_\th^2}{2J} 
%    \frac12 \frac{|F_\th|^2}{|Z_\th|^2} 
    +p+\bodypot \right) 
+ \tilde c(t)\,,
\end{equation}
where $R(\th,t)=\mb R(e^{i\th},t)$.
Inside the fluid, the pressure in conformal variables may be expressed using
\eqref{e:Bernoulli} in the form
\begin{equation}\label{e:pcircZ}
p\circ\mb Z = \Re \mb R - \frac12\left|\frac{\mb F_w}{\mb Z_w}\right|^2 - \bodypot+c(t)\,.
\end{equation}

\subsubsection{Choice of conformal map II} 
Recall that we may freely specify $\th_1'(t)$ (real) and $\mb Z_0'$ (complex), 
as long as $\mb Z_0(t)$ does not hit $\partial\Omega_t$.
We discuss two choices of these functions that may be particularly useful.
The simplest choice is certainly to choose 
\begin{equation}\label{e:th1Z0}
    \th_1'(t) = 0,\quad \mb Z_0'(t)=0.
\end{equation}
From \eqref{e:v0} this is clearly equivalent to the condition
\begin{equation}    
    \upsilon_0(\theta,t) = 0.
\label{e:up0}
\end{equation}
This corresponds to what was done in \cite{Ovsjannikov71}, and 
can suffice for short times or symmetric flows.
In order that the point $\mb Z(0,t)$ should remain inside the fluid domain over longer times, 
however, it may be convenient to choose $\mb Z_0(t)$ to move with the fluid velocity 
$\mb U_0(t)=\mb U(0,t)$, so that  
\begin{equation}\label{e:z0t}
    \overline{\mb Z_0'(t)} = \bar{\mb U}_0(t)
= \frac{\mb F_w(0,t)}{\mb Z_w(0,t)}
= \frac{\hat F_1(t)}{\hat Z_1(t)} \,.
\end{equation}
Then it seems simplest to choose $\th_1'=2\Im \mb Z_0'\hat Z_2\hat Z_1^{-2}$, 
which makes $\im\hat G_1 = 0$. It follows
\begin{equation}\label{e:a2}
\upsilon(\th,t) = \frac{w}{|\hat Z_1|^2} \left(\overline{w \hat F_1}-w\hat F_1 \right)
= \frac{-2iw \im(w\hat F_1)}{|\hat Z_1|^2}\,.
\end{equation}

\subsubsection{Real equations.} 
Computation and analysis are carried out in terms of the real parts of the
traces $X=\re Z$ and $\Phi =\re F$ on the circle $\D\bbD$, 
relying on the Hilbert transform to recover the imaginary parts as needed.
Because $w\mb Z_w=-i\mb Z_\th = -i(X_\th+iY_\th)$ on $\D\bbD$,
from \eqref{e:Zth} we get
\begin{equation}\label{e:Xt0}
X_t = \Im\, (X_\th+i Y_\th)
\left(\frac{\upsilon^n}J+i\frac{\upsilon^\tau}{J}  \right) 
=
Y_\th \frac{\upsilon^n}J + X_\th \frac{\upsilon^\tau}J \,.
\end{equation}
Because $w\mb F_w=-iF_\th=-i(\Phi_\th+i\Psi_\th)$ on $\partial\bbD$, 
the real part of \eqref{e:Ft} yields
\begin{align}\label{e:Phit0}
\Phi_t &=
    \Im\, (\Phi_\th+i\Psi_\th)
\left(\frac{\upsilon^n}J+i\frac{\upsilon^\tau}{J}  \right)
- \frac{\Phi_\th^2+\Psi_\th^2}{2J} 
-p -\bodypot-\tilde c_1(t) 
\\ \nonumber
&=\frac1{2J}(\Psi_\th^2-\Phi_\th^2) 
+ \Phi_\th  \frac{\upsilon^\tau}J 
 - p - \bodypot - \tilde c_1(t) \,,
\end{align}
where to get the last equality we use the fact that
$\upsilon^n = \Psi_\th$ from \eqref{e:match}.
We typically choose $\tilde c_1=0$, but one could instead choose $\tilde c_1$ 
to enforce $\hat\Phi_0$ to be constant.

The pressure is given on the boundary as $p=\gamma\kappa$, 
where $\gamma$ is the coefficient of surface tension
and $\kappa$ is the curvature of the curve $\th\mapsto (X(\th,t),Y(\th,t))$. 
With the convention that the curvature of a circle is positive, the pressure at the boundary is 
\begin{gather}
p=\gamma \frac{X_{\th}Y_{\th\th}-X_{\th\th}Y_{\th}}{(X_{\th}^2+Y_{\th}^2)^{3/2}}.
\end{gather}

We summarize the equations of motion in terms of $X$ and $\Phi$
using the operator defined by 
\begin{gather}
\Lambda=H\partial_\th=\partial_\th H\,,
\end{gather} 
so that $Y_\th = HX_\th = \Lambda X$ and $\Psi_\th =H\Phi_\th=\Lambda\Phi$, e.g.
We compute that 
\begin{equation}\label{e:vtau}
\frac{\upsilon^\tau}J = \im\frac Gw = H\left(\frac{\upsilon^n}J\right) + 
\im\left(\frac{\upsilon_0}w\right),
\end{equation}
and note that
\begin{equation} \label{e:vtau1}
\frac{\upsilon_0}w = \bar w \hat G_0 - w\overline{\hat G_0} + i\Im \hat G_1  = 
i\im (2\bar w\hat G_0+\hat G_1) =: i\upsilon_1(\theta,t).
\end{equation}
Taking $\tilde c_1=0$, 
we obtain the evolution equations governing ideal droplets finally in the form
\begin{gather}
\label{e:XtLam}
X_t = (\Lambda X)\frac{\Lambda\Phi}J 
+ X_\th \left( H\left(\frac{\Lambda\Phi}J\right) 
+ \upsilon_1 \right)\,,
\\
\label{e:PtLam}
\Phi_t =-p-\bodypot + \frac{(\Lambda\Phi)^2-\Phi_\th^2}{2J}
+ \Phi_\th \left( H\left(\frac{\Lambda\Phi}J\right) 
+ \upsilon_1 \right)\,,
\end{gather}
with the definitions
\begin{gather}
J = X_\th^2 + (\Lambda X)^2\,, \qquad 
p = {\gamma}{J^{-3/2}} ( X_\th\,\Lambda X_\th - X_{\th\th}\,\Lambda X) \,.
\label{e:pLam}
\end{gather}

\begin{remark}
We can always require $\Psi=H\Phi$ by choosing $\hat\Psi_0=0$. 
However, $HX=Y-\hat Y_0$, so one needs to know $\hat Y_0=\mb Y(0,t)$
in order to completely reconstruct $Y$ from $X$. 
In case one makes the simple choice $\upsilon_0\equiv0$, 
then because this is equivalent to \eqref{e:th1Z0}, it follows that necessarily
\begin{equation}\label{e:x0ty0t}
\hat X_0'(t) = \hat Y_0'(t) =  0\,,
\end{equation}
whence $\hat X_0(t)$ and $\hat Y_0(t)$ remain constant in time. 
In case one makes the choice \eqref{e:a2}, however,
one should determine $\hat Y_0(t)$ by solving 
the equation 
\begin{equation}\label{e:y0t}
\hat Y_0'(t)  
= -\im \frac{\hat F_1}{\hat Z_1}
= -\Im \frac{\hat\Phi_1}{\hat X_1}\,,
\end{equation}
which derives from \eqref{e:z0t} by applying \eqref{e:ABk} to $\mb F$ and $\mb Z$.
\end{remark}

\subsubsection{Galilean invariance.} One can check (with a bit of difficulty)
that the conformal equations of motion above are invariant under
a Galilean transformation
\begin{equation}
\mb Z = \breve{\mb Z}+{\bm \vel}t\,, 
\quad \mb F = \breve{\mb F}+\bar{\bm\vel}\breve{\mb Z}\,,
\qquad \bm\vel\in\C\,,
\end{equation}
for which
$\mb U = \breve{\mb U}+\bm\vel$, $\mb Z_0 = \breve{\mb Z}_0+\bm\vel t$,
$\mb Z_w=\breve{\mb Z}_w$, and
\[
\mb G = \frac{\mb Z_t}{\mb Z_w} = \breve{\mb G} + \frac{\bm\vel}{{\mb Z}_w}\,.
\]
Defining $\breve\upsilon^n$ and $\breve\upsilon^\tau$ by the analog of 
\eqref{e:barnZt}, we obtain the analog of 
\eqref{e:v0} and \eqref{e:vtau}
due to the fact that the holomorphic function $1/\mb Z_w$ has Taylor series
that starts with the linear approximation
\[
    \frac 1{\mb Z_w(0,t)}- \frac{\mb Z_{ww}(0,t)}{\mb Z_w(0,t)^2}w
    = \hat Z_1\inv - 2\hat Z_2\hat Z_1^{-2} w 
\]
which implies
\[
     \frac{\Im\bm\vel\bar n }J = H\left(\frac{\Re\bm\vel\bar n}J\right) + 
    2\Im \bm\vel(\hat Z_1 e^{-i\th} - \hat Z_2\hat Z_1^{-2})\,.
\]
From \eqref{e:barnZt} and its analog we have 
\[
\upsilon^n = \Re \bar nZ_t = \Re\bar n(\breve Z_t+{\bm\vel}) 
= \breve\upsilon^n
+\Re\bar{\bm\vel}n \,,
\]
while
\[
\Psi_\th = \Im F_\th = \Im(\breve F_\th+\bar{\bm\vel}Z_\th) 
= \breve\Psi_\th
+\Re\bar{\bm\vel}n \,.
\]
Consequently $\breve\upsilon^n=\breve\Psi_\th$ and the relation \eqref{e:match}
transforms as desired.

It is then straightforward to check that the Bernoulli equation
\eqref{e:Ft} transforms properly,
by expanding $\frac12|\breve {\mb U}+\bm\vel|^2$ and adjusting $\tilde c(t)$ to ensure 
that  %\tcr{[recheck sign after sign of $R$ changed??]}
\[
    \Re R = \Re \left(\breve R -{\breve{\mb U}}\bar{\bm\vel}- |\bm\vel|^2\right) \,.
\] 

\subsection{Geodesic paths in the conformal mapping formulation}

The governing evolution equations \eqref{e:XtLam}--\eqref{e:PtLam} can be derived from a 
variational principle (principle of stationary action) in a manner following
the derivation in \cite{dksz96}. 
In \cite{dksz96}, Dyachenko et al.~started from Zakharov's canonical Hamiltonian system 
for water waves \cite{zakharov68}, wrote the action as the time integral of a Lagrangian, 
expressed this in terms of conformal variables, and set the first variation to zero.

The essence of this variational derivation is closely related to Arnold's geometric description 
of smooth incompressible Euler flow (in a fixed domain) as geodesic flow in the
group of volume-preserving diffeomorphisms.  Here we show how geodesic conditions
yield the correct equations of motion in the present context of 
ideal droplet flows, governed by two-dimensional incompressible potential flow 
with zero surface tension and zero force potential, in the conformal mapping formulation. Our computation here corresponds to the computation of 
geodesic equations in \cite{LPS} using Eulerian and Lagrangian variables.

Consider a path of conformal injections $\mb Z\colon\bbD\to\C$ that preserve
the area of $\Omega_t=\mb Z(\bbD,t)$, given by 
\begin{equation}\label{e:area0}
|\Omega_t|=\int_{\partial\Omega_t} x\,dy 
= \int_{\bbT} XY_\th\,d\th
= \int_{\bbT} X\,\Lambda X\,d\th
= \frac12 \Im \int_{\bbT} \bar Z Z_\th\,d\th \,.
\end{equation}
We formally define a `Riemannian metric' by pulling back the Eulerian kinetic 
energy of the potential flow induced in $\Omega_t$ by the harmonic
velocity potential $\phi$ whose pulled-back boundary values $\Phi$
are determined (up to constant) by $\mb Z$ and $\mb Z_t$ via \eqref{e:match},
which we can write in the form
\begin{equation}\label{e:match2}
\upsilon^n = \re\bar nZ_t = \Im  Z_\th \bar Z_t = \Psi_\th = \Lambda\Phi \,.
\end{equation}
We note that for the geodesic variational principle,
this kinematic relation \eqref{e:match2} is \textit{imposed} in order to define
a notion of path length corresponding to a suitable `metric.'
This entails the kinematic equation \eqref{e:XtLam} by following the
computations in subsection~\ref{sss:Hkinematic} same as before.

In terms of the holomorphic velocity potential $\phi+i\psi$ on $\Omega_t$
and its pullback $\mb \Phi+i\mb \Psi$ on $\bbD$, 
the kinetic energy is half of the following quantity (where $\nu$
corresponds to the unit outer normal to $\partial\Omega_t$):
\begin{equation}
\int_{\Omega_t}|\nabla\phi|^2 
= \int_{\partial\Omega_t}\phi\, \partial_\nu\phi \,ds
= \int_{\partial\Omega_t}\phi\, \partial_s\psi \,ds
=\int_{\bbT} \Phi\,\Psi_\th\,d\th  .
\end{equation}
We regard this as a quadratic form on the `tangent space' of the `manifold' of
conformal images of $\bbD$, producing 
a `Riemannian metric' at base point $\mb Z$ given by
%\footnote{\tcr{[Weird - isn't this the area of the image $\mb F(\bbD)$??]}}
\begin{equation}\label{e:RMZ}
g_{\mb Z}({\mb Z_t},{\mb Z_t}) =  \int_{\bbT} \Phi\,\Lambda\Phi\,d\th\,.
\end{equation}
This `metric' is naturally degenerate in $\mb Z_t$ 
because the conformal map $\mb Z$
has the same freedom as before: A different choice of $\mb Z_0'$ and $\th_1'(t)$ 
corresponds to a path $\mb Z$ that differs only by 
right composition with a holomorphic automorphism of $\bbD$, and 
the image of $\mb Z$, the Eulerian flow, and the metric do not change.

A geodesic path with respect to this metric is one for which the action
\[
{\mathcal A} = \int_0^T \int_{\bbT} \frac12  \Phi\,\Lambda\Phi\,d\th\,dt
\]
is stationary.  The variation is
\[
\delta{\mathcal A} = \int_0^T \int_{\bbT}  
\Phi\,\Lambda\delta\Phi\,d\th\,dt.
\]
Taking the variation of the kinematic equation \eqref{e:match2}, 
we find
\[
\Lambda\delta\Phi = 
\Im (Z_\th \,\overline{\delta Z}_t - Z_t \,\overline{\delta Z}_\th ) \,.
\]
Substituting in and integrating by parts (requiring $\delta Z=0$
at the endpoints $t=0$ and $t=T$), we get
\begin{equation}\label{e:varA2}
\delta{\mathcal A} = -\Im \int_0^T \int_{\bbT}  
\overline{\delta Z} 
(\Phi_t Z_\th - \Phi_\th Z_t)\,d\th\,dt .
\end{equation}
The variation $\delta Z$ is an arbitrary holomorphic function subject to 
a constraint arising from fixing the area given by \eqref{e:area0}.
From \eqref{e:area0}
we find the constraint on $\delta Z$ takes the form
\begin{equation}\label{c:delZ}
0 = \int_{\bbT}\delta X\,\Lambda X\,d\th = \frac12
\Im \int_{\partial\bbD} \overline{\delta Z}Z_\th \,d\th\,.
\end{equation}

Because the negative Fourier modes of $\delta Z$ vanish, and the non-negative Fourier modes
are arbitrary up to the orthogonality condition \eqref{c:delZ}, we conclude that the factor
$\Phi_t Z_\th - \Phi_\th Z_t$ in the integrand in \eqref{e:varA2} 
is anti-holomophic with mean zero (having only negative Fourier modes), 
modulo a real constant multiple of $Z_\th$.
Thus for some real function $c(t)$,
\begin{equation}\label{e:varA3}
\Phi_t Z_\th - \Phi_\th Z_t = \bar Q  + c(t)Z_\th 
\end{equation}
where $Q:\bbT\to\C$ 
which is the trace of some mean-zero holomorphic function in $\bbD$.
(To see that $c(t)$ must be real, use \eqref{e:varA3} in \eqref{e:varA2}
with $\delta Z=Z_\th$.)

We isolate the factor $\Phi_t$ from this equation as follows.
First, multiply by $\bar Z_\th$, noting that $Z_\th Q$ is
the boundary trace of the holomorphic function $iw\mb Z_w \mb Q$
and this has zero average over $S^1$. 
Next we apply $1+iH$ to both sides, which eliminates the 
term $\overline{Z_\th Q}$ and yields
\[
(1+iH)(J\Phi_t) = (1+iH)(\Phi_\th \bar Z_\th Z_t) + c(t)(1+iH)J
\]
From \eqref{e:barnZt} and \eqref{e:vtau}--\eqref{e:vtau1} we can write
\begin{align}\label{e:Ztth}
\bar Z_\th Z_t &= \bar Z_\th(-iZ_\th)\left(
(1+iH)\frac{\Psi_\th}J + i \upsilon_1 \right)
    = J \left( H\frac{\Psi_\th}J + \upsilon_1 \right) %2\Re\bar{\bm\alpha} w+b\right)
 - i \Psi_\th\,.
\nonumber
\end{align}
Substituting this in above and taking the real part we find 
\begin{equation}\label{e:JPhit}
J\Phi_t = J\Phi_\th
    \left( H\frac{\Psi_\th}J + \upsilon_1 \right)%2\Re\bar{\bm\alpha} w+b\right)
+ H(\Phi_\th\Psi_\th) + c(t)J.
\end{equation}
Now, because $\Psi_\th=H\Phi_\th$, the function
$\Phi_\th+i\Psi_\th$ is a holomorphic trace with mean zero.
Hence so is the square of this function, and consequently
\[
\Phi_\th^2-\Psi_\th^2 = -H(2\Phi_\th\Psi_\th) \,.
\]
Using this in \eqref{e:JPhit} one obtains the Bernoulli equation
\eqref{e:PtLam}
with zero surface tension $\gamma$ and zero force potential $\bodypot$.

\subsubsection{Surface tension and potential energy} 
To include the effects of body force and surface tension 
in the action principle, we subtract the bulk potential energy
and interfacial energy (the droplet perimeter times $\gamma$ here),
to obtain the modified action
\begin{equation}\label{d:AgammaV}
    {\mathcal A}_{\gamma,\bodypot} = 
    \int_0^T 
    \left(\int_{\bbT} \left(\frac12  \Phi\,\Lambda\Phi -\gamma|Z_\th|\right)d\th 
    -\int_{\Omega_t} \bodypot(z)\,dx\,dy\right)\,dt\,, \quad \Omega_t=\mb Z(\bbD,t)\,.
\end{equation}

To determine the variation of bulk potential energy we proceed as follows.
Because a virtual displacement $\delta Z$ corresponds to virtual normal velocity
\[
\tilde v_\nu :=
\Re \overline{\delta Z}\left(\frac{-i Z_\th}{|Z_\th|}\right) 
= \Im \overline{\delta Z}\frac{Z_\th}{|Z_\th|},
\]
using the Reynolds transport theorem we compute
\[
\delta \int_{\Omega_t} \bodypot(z)\,dx\,dy 
= \int_{\partial\Omega_t} \bodypot(z)\tilde v_\nu\,ds
= \Im \int_{\bbT} \overline{\delta Z} \,\bodypot(Z)Z_\th\,d\th.
\]
The variation of droplet perimeter is given by
\[
\delta \int_{\partial\Omega_t} 1\,ds = \delta \int_{\bbT}|Z_\th|\,d\th
= \Re \int_{\bbT}\overline{\delta Z}_\th \frac{Z_\th}{|Z_\th|}\,d\th
= -\Im i\int_{\bbT}\overline{\delta Z} \left(\frac{Z_\th}{|Z_\th|}\right)_\th\,d\th.
\]
Because $Z_\th/|Z_\th|$ is a unit tangent vector to $\partial\Omega_t$
and $ds=|Z_\th|\,d\th$, we have $i(Z_\th/|Z_\th|)_\th=- \kappa Z_\th$ where
$\kappa$ is the curvature of $\partial\Omega_t$.

By consequence, we find
\begin{equation}\label{e:varA2g}
    \delta{\mathcal A}_{\gamma,\bodypot} = -\Im \int_0^T \int_{\bbT}  
\overline{\delta Z} 
(\Phi_t Z_\th - \Phi_\th Z_t+(\gamma\kappa+\bodypot(Z)) Z_\th)\,d\th\,dt ,
\end{equation}
Arguing as before, if this vanishes for all holomorphic $\delta Z$ constrained by 
\eqref{c:delZ}, then we obtain \eqref{e:varA3} with the term $(\gamma\kappa+\bodypot)Z_\th$
added on the left-hand side:
\begin{equation}\label{e:varA4}
\Phi_t Z_\th - \Phi_\th Z_t + (\gamma\kappa+\bodypot)Z_\th = \bar Q  + c(t)Z_\th  \,.
\end{equation}
We then get \eqref{e:JPhit} with the added term $(\gamma\kappa+\bodypot)J$,
and \eqref{e:PtLam} follows as desired.

\begin{remark} 
The fact that the potential energy terms in \eqref{d:AgammaV}
appear with a negative sign is natural due to the origin of this
action from Lagrangian mechanics.
\end{remark}

\subsection{Formal conservation laws for area and energy}
The equations of motion imply conservation of the area $|\Omega_t|$,
because from \eqref{e:area0} and \eqref{e:match2} it follows
\begin{equation}
\frac{d}{dt}|\Omega_t| = \frac12 \int_{\bbT}\Psi_\th\,d\th = 0.
\end{equation}

The variational calculations in the previous subsection also make it convenient
to establish conservation of the total (kinetic plus potential) energy given by 
\begin{equation}\label{d:energy}
{\mathcal E} = \int_{\bbT} \left(\frac12 \Phi\,\Lambda\Phi + \gamma|Z_\th|\right)\,d\th
+ \int_{\Omega_t} \bodypot(z)\,dx\,dy \,.
\end{equation} 
For, taking $\delta Z= Z_t$ in the potential energy computations we find
\begin{equation}
\partial_t\left( \int_{\bbT} \gamma|Z_\th|\,d\th 
+ \int_{\Omega_t} \bodypot(z)\,dx\,dy \right)
= 
\Im \int_{\bbT} \bar Z_t Z_\th (\gamma\kappa + \bodypot) \,d\th
\end{equation}
while, due to \eqref{e:match2},
\begin{equation}
\partial_t \int_{\bbT} \frac12\Phi\,\Lambda\Phi\,d\th = 
\Im\int_{\bbT}
\Phi_tZ_\th \bar Z_t\,d\th \,.
\end{equation}
Adding these equations and using \eqref{e:varA4} we get
\[
\frac{d\mathcal E}{dt} = \Im \int_{\bbT}
(\Phi_\th Z_t +\bar Q + c(t)Z_\th) \bar Z_t\,d\th = 0\,,
\]
because the first term is real, the second term is anti-holomorphic with mean zero, and
in the third term $c(t)$ is real and $\Im Z_\th\bar Z_t=\Psi_\th$, which
integrates to zero.

\subsection{Criteria for continuation of conformal injectivity}
\label{ss:criteria}
We have derived the evolution equations in \eqref{e:XtLam}--\eqref{e:PtLam}
under the assumption that $\mb Z(\cdot,t)$ remains a conformal injection
on the interval of time being considered.
One would then like to have a criterion
which guarantees that having a regular solution of the evolution equations
on this interval of time ensures that the maps $\mb Z(\cdot,t)$
constructed by holomorphic extension from $\partial\bbD$ 
in fact remain conformal injections in $\bbD$. 

Of course we should assume that $\mb Z$ is a conformal injection 
at the initial time $t=0$.  
Suppose $Z=X+iY$ is $C^1$ on $\bbT\times[0,T]$ with $HX=Y-\hat Y_0$,
and let $\mb Z$ be the holomorphic extension of $Z$ into $\bbD$.
If one assumes $J=|Z_\th|>0$ on $\bbT\times[0,T]$, say, then
it follows from the maximum modulus principle and a continuation 
argument that  $1/\mb Z_w$ remains holomorphic and bounded 
on $\bbD\times[0,T]$. Then $\bm Z_w$ will not vanish in $\bbD$,
which is necessary for the map $w\mapsto \mb Z(w,t)$ to be a
conformal injection.

This local condition is in principle not sufficient to ensure
the global injectivity of $\mb Z$ on $\bbD$. 
Non-injectivity of $\mb Z(\cdot,t)$ may develop
due to collision of points on the boundary, 
as in a so-called `splash singularity' (see \cite{CF12}).
We can state a precise sufficient criterion for the continuation of injectivity
as follows, making use of known results in complex function theory.

\begin{theorem}[Persistence of injectivity] 
Let $\mb Z\colon\bar \bbD\times[0,T]\to\C$, 
with $\mb Z(\cdot,t)$ holomorphic in $\bbD$ for each $t$,
and assume $\mb Z$ and $\mb Z_w$ are continuous on $\bar\bbD\times[0,T]$.
Further, assume that $\mb Z(\cdot,0)$ is injective on $\partial\bbD$.
Define 
\[
T_*=\sup\{T_1: \mb Z(\cdot,t)
\mbox{ is injective on $\bar\bbD$ for all $t\in[0,T_1]$} \}.
\]
Then if $T_*<T$, %either $\mb Z(\cdot,T_*)$ is not injective on $\partial\bbD$, 
the curve $\mb Z(\partial\bbD,T_*)$ fails to be a Jordan curve of class
$C^{1,\alpha}$ for every $\alpha\in (0, 1]$.
\end{theorem}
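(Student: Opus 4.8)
The plan is to argue by contradiction. Suppose $T_*<T$ and that, for some $\alpha\in(0,1]$, the boundary curve $\theta\mapsto \mb Z(e^{i\th},T_*)$ is a Jordan curve $\Gamma$ of class $C^{1,\alpha}$. I would then show that $\mb Z(\cdot,t)$ must remain injective on $\bar\bbD$ for $t$ in a full two-sided neighborhood of $T_*$, which contradicts the definition of $T_*$ as the supremum of times up to which injectivity holds.

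First I would extract the consequences of the Jordan-curve hypothesis at time $T_*$. Since $\Gamma$ is a Jordan curve, $\mb Z(\cdot,T_*)$ is injective on $\partial\bbD$, and by the classical boundary-correspondence theorem (a consequence of the argument principle: a function continuous on $\bar\bbD$ and holomorphic in $\bbD$ whose boundary values parametrize a Jordan curve maps $\bbD$ conformally onto the bounded complementary component and $\bar\bbD$ homeomorphically onto its closure), $\mb Z(\cdot,T_*)$ is injective on all of $\bar\bbD$ and maps $\bbD$ conformally onto $\Omega_{T_*}:=\mathrm{int}(\Gamma)$. Because $\Gamma$ is $C^{1,\alpha}$ it is Dini-smooth, so the Kellogg--Warschawski theorem gives that the derivative of the Riemann map of $\Omega_{T_*}$ extends continuously to $\bar\bbD$ and is nonvanishing there; composing with the disk automorphism that relates this Riemann map to $\mb Z(\cdot,T_*)$ (whose derivative is itself nonvanishing on $\bar\bbD$), we conclude that $\mb Z_w(\cdot,T_*)\neq0$ on all of $\bar\bbD$, the boundary included. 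This is the only place the $C^{1,\alpha}$ hypothesis is used, and it is precisely what excludes a degenerate boundary point (an incipient cusp, say); the standing hypothesis that $\mb Z_w$ is continuous on $\bar\bbD\times[0,T]$ lets us transfer this nonvanishing to nearby times.

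The core step is a compactness argument showing $\mb Z(\cdot,t)$ is injective on $\bar\bbD$ for $t$ near $T_*$. If not, there are $t_k\to T_*$ and $w_1^{(k)}\neq w_2^{(k)}$ in $\bar\bbD$ with $\mb Z(w_1^{(k)},t_k)=\mb Z(w_2^{(k)},t_k)$. Passing to a subsequence, $w_i^{(k)}\to w_i^\ast$; uniform continuity of $\mb Z$ on the compact set $\bar\bbD\times[0,T]$ yields $\mb Z(w_1^\ast,T_*)=\mb Z(w_2^\ast,T_*)$, hence $w_1^\ast=w_2^\ast=:w^\ast$ by injectivity at $T_*$. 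Choose $\delta,\eta>0$ with $|\mb Z_w(w,t)-\mb Z_w(w^\ast,T_*)|<\tfrac12|\mb Z_w(w^\ast,T_*)|$ for $w\in B(w^\ast,\delta)\cap\bar\bbD$ and $|t-T_*|<\eta$, possible since $\mb Z_w$ is continuous and $\mb Z_w(w^\ast,T_*)\neq0$. For $k$ large, $t_k$ and both $w_i^{(k)}$ meet these bounds, and $B(w^\ast,\delta)\cap\bar\bbD$ is convex, so integrating $\mb Z_w(\cdot,t_k)$ along the segment $[w_2^{(k)},w_1^{(k)}]$ gives $|\mb Z(w_1^{(k)},t_k)-\mb Z(w_2^{(k)},t_k)|\ge \tfrac12|\mb Z_w(w^\ast,T_*)|\,|w_1^{(k)}-w_2^{(k)}|>0$, a contradiction. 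Hence there is $\eps>0$ with $\mb Z(\cdot,t)$ injective on $\bar\bbD$ for $t\in[T_*-\eps,T_*+\eps]\cap[0,T]$. Since injectivity also holds for all $t\in[0,T_*)$ by the definition of $T_*$, it holds on $[0,T_*+\eps']$ with $\eps'=\min(\eps,(T-T_*)/2)>0$; as $T_*+\eps'\le T$, this contradicts $T_*<T$ being the supremum.

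I expect the main obstacle to be getting the boundary regularity input exactly right: one must invoke the Kellogg--Warschawski theorem in the form ``a $C^{1,\alpha}$ (equivalently, Dini-smooth) Jordan curve forces the conformal parametrization to have nonvanishing derivative up to and including the boundary,'' and one must ensure the perturbation estimate is carried out uniformly on $\bar\bbD$ rather than on compact subsets of $\bbD$, so that it genuinely rules out boundary self-intersections (splash-type degenerations) and not merely interior non-injectivity.
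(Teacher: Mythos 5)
Your proof is correct, and at the macro level it has the same shape as the paper's: assume the boundary trace at $T_*$ is a $C^{1,\alpha}$ Jordan curve, deduce that $\mb Z(\cdot,T_*)$ is a conformal injection of $\bar\bbD$ with $\mb Z_w$ nonvanishing up to the boundary (you cite Kellogg--Warschawski, the paper cites Pascu--Pascu and Pommerenke for the same fact, after first invoking Darboux--Picard), and then argue that injectivity persists a little past $T_*$, contradicting the definition of $T_*$. The middle step, however, is handled by a genuinely different device. The paper introduces the global modulus
\[
K = \inf_{\substack{w_1,w_2\in\bar\bbD\\ w_1\ne w_2}}
\left|\frac{\mb Z(w_1,T_*)-\mb Z(w_2,T_*)}{w_1-w_2}\right|,
\]
observes that $K>0$ (which is where the nonvanishing of $\mb Z_w$ on the closed disk is used, so that the difference quotient extends continuously and positively to the diagonal of $\bar\bbD\times\bar\bbD$), and then shows by integrating along segments that any $\mb Y$ holomorphic in $\bbD$, continuous on $\bar\bbD$, with $\sup_{\bbD}|\mb Y_w-\mb Z_w(\cdot,T_*)|<K$, must be injective; taking $\mb Y=\mb Z(\cdot,t)$ for $t$ slightly past $T_*$ finishes the proof. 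You instead suppose injectivity fails along $t_k\to T_*$, use compactness of $\bar\bbD$ to localize the hypothetical collision pair at a single point $w^*\in\bar\bbD$, and then run the same segment-integration estimate only locally near $(w^*,T_*)$, using continuity of $\mb Z_w$ and $\mb Z_w(w^*,T_*)\ne0$. What the paper's $K$-argument buys is an explicit quantitative perturbation bound; what your compactness argument buys is that you never need to verify the global positivity of $K$ — you only need the derivative to be nonzero at the one accumulation point, which is the cleaner way to see exactly how the $C^{1,\alpha}$ hypothesis enters (it rules out a vanishing derivative at a boundary point). Both routes rely in the same essential way on the convexity of $\bar\bbD$ so that the segment $[w_2,w_1]$ stays in the closed disk, and on the boundary-regularity input guaranteeing $\mb Z_w\ne0$ up to and including $\partial\bbD$.
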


The point of this result is that persistence of injectivity in $\bbD$
is ensured by good behavior of the boundary trace $Z$.
This is rather subtle since injectivity does not in principle require
nonvanishing of $\bm Z_w$ on the boundary.

%[Remark that the $C^{1,\alpha}$ condition can be relaxed 
%to a purely geometric condition on $\partial\Omega_t=\mb Z(\partial\bbD,t)$
%according to Pascu-Pascu, see Pommerenke book pages.
 
\begin{proof}
1. Due to the classical Darboux-Picard 
theorem \cite[Thm.~9.16, p.~310]{Burckel79},
%[mentioned by B. Simon vol. 2A p. 165, needs citation],
the injectivity of $\mb Z(\cdot,t)$ on $\partial\bbD$ 
implies the injectivity of 
$\mb Z(\cdot,t)$ on $\bar\bbD$, for any $t\in[0,T]$.
By consequence $T_*\ge0$. 
Suppose that $T_*<T$. If $\mb Z(\cdot,T_*)$ is
not injective on $\partial\bbD$ then it is not a Jordan curve 
and we are done, so assume it is injective.
Then as above $\mb Z(\cdot,T_*)$ is a conformal injection on $\bar\bbD$.

2.  Now suppose for the sake of contradiction that  $\mb Z(\partial\bbD,T_*)$
is a Jordan curve of class $C^{1,\alpha}$ for some $\alpha\in(0,1)$.
Then, as remarked by Pascu \& Pascu \cite[Remark 2.2]{Pascu11},
results in the book of Pommerenke \cite[p.~24 and pp.~48-49]{Pommerenke}
imply that 
the derivative $\mb Z_w(\cdot,T_*)$ is nonvanishing on $\bar\bbD$.
By consequence, the quantity defined by
\begin{equation}\label{d:K}
K = \inf_{\substack{w_1,w_2\in \bar\bbD \\ w_1\ne w_2}} \left|
\frac{ \mb Z(w_1,T_*)-\mb Z(w_2,T_*)}{w_1-w_2} \right| \,,
\end{equation}
is strictly positive, satisfying $K>0$. 

3. Now, for any function $\mb Y$ holomorphic on $\bbD$ and continuous 
on $\bar\bbD$ that satisfies
\[
\hat K = \sup_{w\in\bbD} |\mb Y_w(w)-\mb Z_w(w,T_*)| < K\,,
\]
necessarily $\mb Y$ is injective on $\bar\bbD$. For if $\mb Y(w_0)=\mb Y(w_1)$
for some $w_0\ne w_1$ in $\bar\bbD$, then with $w_t=(1-t)w_0+tw_1$
we have
\[
|\mb Z(w_1,T_*) -\mb Z(w_0,T_*)| \le 
\int_0^1 |\mb Z_w(w_t,T_*)-\mb Y_w(w_t)|\,dt\,|w_1-w_0| \le \hat K|w_1-w_0|
\]
and this leads to a contradiction with the definition of $K$.
(A more subtle related argument is made in the proof of
Theorem 2.4 of \cite{Pascu11}.)

4. Our assumptions ensure that we may take $\mb Y=\mb Z(\cdot,t)$
whenever $t-T_*>0$ is sufficiently small. This contradicts
the definition of $T_*$ and concludes the proof.
\end{proof}

To relate this criterion back to the solution $(X,\Phi)$ of 
\eqref{e:XtLam}--\eqref{e:PtLam}, we note the following corollary
which is implied by the fact that the Hilbert transform is bounded
on spaces $C^\alpha(\bbT)$ of H\"older continuous functions,
and if the map $\th\mapsto Z(\th,t)$ is $C^{1,\alpha}$ 
{\em with nonvanishing derivative} then the curve $Z(\bbT,t)$ is $C^{1,\alpha}$.

\begin{corollary}\label{c:Xinj}
Suppose $X\in C([0,T],C^{1,\alpha}(\bbT))$ for some $\alpha\in(0,1)$ 
and $\mb Z$ is the holomorphic extension of $Z=X+iHX$ into $\bbD$. 
Then $\mb Z\in C([0,T],C^{1,\alpha}(\bar\bbD))$.
Assume moreover that $Z(\cdot,0)$ is injective on $\bbT$ and define
$T_*$ as in the previous Theorem. Then if $T_*<T$, either
$Z(\cdot,T_*)$ is not injective on $\bbT$, or 
$|Z_\th(\th_*,T_*)|=0$ for some $\th_*\in\bbT$. 
\end{corollary}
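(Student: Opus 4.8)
The plan is to deduce Corollary~\ref{c:Xinj} from the preceding Theorem on persistence of injectivity. The only two things that need doing are: (a) verifying that the hypotheses $X\in C([0,T],C^{1,\alpha}(\bbT))$ with $Z=X+iHX$ force the regularity and continuity required by that Theorem, namely $\mb Z,\mb Z_w$ continuous on $\bar\bbD\times[0,T]$ with $\mb Z(\cdot,t)$ holomorphic in $\bbD$; and (b) translating the Theorem's conclusion ``$\mb Z(\partial\bbD,T_*)$ is not a Jordan curve of class $C^{1,\alpha}$'' into the dichotomy in the statement, using the explicit parametrization $\theta\mapsto Z(\theta,T_*)$.

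First I would establish $\mb Z\in C([0,T],C^{1,\alpha}(\bar\bbD))$. Since $H$ is bounded on $C^\alpha(\bbT)$ and commutes with $\partial_\theta$, from $X_\theta\in C^\alpha(\bbT)$ one gets $(HX)_\theta=H(X_\theta)\in C^\alpha(\bbT)$, so $Z=X+iHX\in C^{1,\alpha}(\bbT)$; by hypothesis this is the boundary trace of the holomorphic function $\mb Z$ (the negative Fourier modes of $X+iHX$ cancel, cf.~\eqref{e:ABk}). Classical boundary-regularity theory for holomorphic functions on the disk (a $C^{1,\alpha}$ boundary trace extends to a $C^{1,\alpha}$ function on $\bar\bbD$; equivalently, Privalov's theorem applied to $\mb Z_w$, see Pommerenke) then gives $\mb Z(\cdot,t)\in C^{1,\alpha}(\bar\bbD)$ with norm controlled by $\|X(\cdot,t)\|_{C^{1,\alpha}(\bbT)}$. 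As every operation involved is bounded and linear, $t\mapsto\mb Z(\cdot,t)$ is continuous into $C^{1,\alpha}(\bar\bbD)$, hence $\mb Z$ and $\mb Z_w$ are jointly continuous on $\bar\bbD\times[0,T]$. Finally, since $\mb Z(e^{i\theta},0)=Z(\theta,0)$, injectivity of $Z(\cdot,0)$ on $\bbT$ is exactly injectivity of $\mb Z(\cdot,0)$ on $\partial\bbD$. Thus the preceding Theorem applies, and if $T_*<T$ the set $\mb Z(\partial\bbD,T_*)=Z(\bbT,T_*)$ fails to be a Jordan curve of class $C^{1,\alpha}$ (for our $\alpha$, indeed for every $\alpha\in(0,1]$).

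Then I would close by contraposition. Suppose $T_*<T$ but, contrary to the assertion, $Z(\cdot,T_*)$ is injective on $\bbT$ \emph{and} $|Z_\theta(\theta,T_*)|>0$ for all $\theta\in\bbT$. From the previous step $\theta\mapsto Z(\theta,T_*)$ lies in $C^{1,\alpha}(\bbT)$; being also injective with nonvanishing derivative, it is a regular $C^{1,\alpha}$ parametrization of a Jordan curve, so $Z(\bbT,T_*)$ \emph{is} a Jordan curve of class $C^{1,\alpha}$ --- contradicting the preceding paragraph. Hence either $Z(\cdot,T_*)$ is not injective on $\bbT$, or $|Z_\theta(\theta_*,T_*)|=0$ for some $\theta_*\in\bbT$.

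The main obstacle is the regularity step: correctly invoking (and attributing) the boundary-to-interior $C^{1,\alpha}$ estimate for holomorphic functions on $\bbD$ and upgrading $t$-continuity in $C^{1,\alpha}(\bbT)$ to joint continuity of $\mb Z$ and $\mb Z_w$ on $\bar\bbD\times[0,T]$, so that the hypotheses of the preceding Theorem are literally met. A secondary point needing care is to fix the convention that a ``Jordan curve of class $C^{1,\alpha}$'' means the image of an injective $C^{1,\alpha}$ map from $\bbT$ with nonvanishing derivative, so that the negation of this property for the parametrization $\theta\mapsto Z(\theta,T_*)$ is precisely ``not injective \emph{or} derivative vanishes somewhere,'' matching the dichotomy claimed.
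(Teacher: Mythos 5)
Your argument is correct and is essentially the paper's: the boundedness of $H$ on $C^\alpha(\bbT)$ gives $Z\in C([0,T],C^{1,\alpha}(\bbT))$, classical boundary-to-interior regularity (Privalov/Pommerenke) upgrades this to $\mb Z\in C([0,T],C^{1,\alpha}(\bar\bbD))$, and then the contrapositive of the persistence-of-injectivity Theorem --- if $Z(\cdot,T_*)$ were injective with nonvanishing derivative, $Z(\bbT,T_*)$ would be a $C^{1,\alpha}$ Jordan curve --- yields the stated dichotomy. The paper merely states these two facts without detail; you have supplied the same reasoning, slightly more explicitly.
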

\begin{remark} We note that 
in recent work of Kinsey and Wu~\cite{KinseyWu2018} and Wu~\cite{wu2015blow},
certain types of free-surface corners present 
in the initial data have been shown to persist for short time
in solutions of water wave equations.
The results above would say nothing about 
conformal parametriztions of such solutions, since the interface
would fail to be $C^{1,\alpha}$ at all times.
\end{remark}

%----------------------------------------------
\bibliographystyle{siam}
\bibliography{eulerrefs,NumAnaPde}
\end{document}

\bigskip
Dedekind's preface to Dirichlet's paper: Google translate:

\bigskip
Before the end and the publication of this essay, which was entrusted to me at the last will of the author, a few remarks must be sent in advance. The hydrodynamic problem dealt with here, the solution of which comes from the winter of 1856-57, was first presented briefly at the end of the lectures on partial differential equations in July 1857, and at the same time became the main result of the whole investigation. in the news of the king !. Society of Sciences published through a short advertisement. The complete presentation was delayed, however, partly due to the author's wish to investigate the subject in more detail, partly due to occupation with other work, until the sudden illness and the too happy death made the completion impossible. Among the papers that were left behind relating to this subject and which came into my hands on July 21, 1859, there was at first a manuscript so carefully executed that it could be put to print without the slightest change; it is only to be deplored that in this fragment too the introduction, which was devoted to the discussion of some general properties of the basic hydrodynamic equations, has remained incomplete. Besides this manuscript, which in the following arrangement goes up to the point of §. 3 is enough, a large number of individual papers were found with fleetingly thrown formulas without text, but the meaning of which was easy to recognize. For the most part they were repetitions of what had already been presented, and only rarely did they give a clue for further elaboration. However, with the help of these papers it was not difficult to find the seven first-order integral equations, which - in the preliminary display of the treatise; they can be found in § 5 of the following presentation. Furthermore, numerous passages referred to the case dealt with in § 8, even if there was no discussion anywhere; I have it (in § 6) with the other sought to combine what was examined in §7, which for the sake of simplicity is also cured in the preliminary notification mentioned above. Furthermore, as can be seen from all the comments I have added, some passages of the manuscript mentioned gave cause for execution more laborious than more difficult calculations, the results of which, because they may be useful for future work, are included in the treatise d, so the §. 4 form. Once I had derived them, they served me in a few further investigations, the results of which, as far as they have so far succeeded, I believed I could communicate in the final paragraph. I do not hide from myself that, with all the care and love that has been shown in the work, many things could have been done more completely and better; but I did not want to delay the finalization any longer, all the less since I can trust that this last work of the great thinker, to whom it was not allowed to put the master hand on the presentation himself, can also be done in will appreciate the imperfect form.